\providecommand\@dotsep{5}\def\listtodoname{List of Todos}\def\listoftodos{\hypersetup{linkcolor=black}\@starttoc{tdo}\listtodoname\hypersetup{linkcolor=blue}}\makeatother
\newtheorem{theorem}{Theorem}[section]
\newtheorem{lemma}[theorem]{Lemma}
\newtheorem{proposition}[theorem]{Proposition}
\newtheorem{definition}[theorem]{Definition}
\theoremstyle{remark}
\def\C{\mathbb C}
\def\R{\mathbb R}
\def\N{\mathbb N}
\renewcommand{\leq}{\leqslant}
\renewcommand{\geq}{\geqslant}
\def\p{\partial}
\DeclareMathOperator{\supp}{supp}
\newcommand*\xbar[1]{%
   \hbox{%
     \vbox{%
       \hrule height 0.5pt 
       \kern0.5ex
       \hbox{%
         \ensuremath{#1}%
       }%
     }%
   }%
} 
\title[Partial data inverse problems for heat equations]{Partial data inverse problems for reaction-diffusion and heat equations}
\author[A. Feizmohammadi]{Ali Feizmohammadi}
\address{Department of Mathematical and Computational Sciences, University of Toronto Mississauga, 3359 Mississauga Road
	Deerfield Hall, DH 3027, Mississauga, ON, Canada L5L 1C6}
\email{ali.feizmohammadi@utoronto.ca}
\author[Y. Kian]{Yavar Kian}
\address{UFR des Sciences et Techniques Université de Rouen Normandie, Avenue de l'Universit\'{e}, BP.12 76801 Saint-Etienne-du-Rouvray, France}
\email{yavar.kian@univ-rouen.fr}
\author[G. Uhlmann]{Gunther Uhlmann}
\address
{Department of Mathematics\\
	University of Washington\\
	Seattle, WA  98195-4350\\
	USA}
\email{gunther@math.washington.edu}
\keywords{}
\begin{document}


\maketitle
\begin{abstract}
We study partial data inverse problems for linear and nonlinear parabolic equations with unknown time-dependent coefficients. In particular, we prove uniqueness results for partial data inverse problems for semilinear reaction-diffusion equations where Dirichlet boundary data and Neumann measurements of solutions are restricted to any open subset of the boundary. We also prove injectivity of the Fr\'{e}chet derivative of the partial Dirichlet-to-Neumann map associated to heat equations. Our proof consists of two crucial ingredients; (i) we introduce an asymptotic family of spherical quasimodes that approximately solve heat equations modulo an exponentially decaying remainder term and (ii) the asymptotic study of a weighted Laplace transform of the unknown coefficient along a straight line segment in the domain where the weight may be viewed as a semiclassical symbol that itself depends on the complex-valued frequency. The latter analysis will rely on Phragm\'{e}n-Lindel\"{o}f principle and Gr\"{o}nwall inequality. 
\end{abstract}

\section{Introduction}
Let $M=(0,T)\times \Omega$ for some $T>0$ and with $\Omega$ denoting  a connected bounded open set in $\R^n$, $n\geq 2$, with smooth boundary.  Let us consider a function $a(t,x,\mu)$ with $\mu\in \R$ and $(t,x)\in \overline{M}$ (closure of $M$) that satisfies 
\begin{itemize}
	\item [{\bf(H)}]{$a(t,x,0)=\p_\mu a(t,x,0)=0$ and $a(t,x,\mu)$ depends real analytically on $\mu$ in the sense of taking values in $C^{\frac{\alpha}{2},\alpha}(\overline{M})$ for some $\alpha \in (0,1)$.}
\end{itemize} 
We consider the reaction-diffusion equation
\begin{equation}\label{heat_semi}
	\begin{aligned}
		\begin{cases}
			\p_t u - \Delta u + a(t,x,u)=0 
			&\text{on  $M=(0,T)\times \Omega$},
			\\
			u=f & \text{on $\Sigma=(0,T)\times \p \Omega$},
			\\
			u|_{t=0}=0 & \text{on $\Omega$}.
		\end{cases}
	\end{aligned}
\end{equation}
Following \cite[Proposition 3.1.]{KLL}, there exists $\kappa>0$ such that given any $f\in \mathcal K_0$, with
$$ \mathcal K_0:=\{h\in C^{1+\frac{\alpha}{2},2+\alpha}(\overline{\Sigma})\,:\, h(0,\cdot)=\p_t h(0,\cdot)=0 \quad \text{and} \quad \|h\|_{C^{1+\frac{\alpha}{2},2+\alpha}(\overline{\Sigma})}\leq \kappa,\}$$
equation \eqref{heat_semi} admits a unique solution $u\in C^{1+\frac{\alpha}{2},2+\alpha}(\overline{M}).$ Let $\Gamma\subset \p\Omega$ be a nonempty open set. We define the local Dirichlet-to-Neumann map associated to the above equation through 
$$ \mathcal N^{\Gamma}_a (f) = \p_\nu u\big|_{(0,T)\times \Gamma} \qquad \forall\, f \in \mathcal K_0\quad \text{with} \quad \supp f \subset (0,T)\times \Gamma,$$
where $u$ is the unique solution to \eqref{heat_semi} with Dirichlet boundary data $f$.
We are interested in the following inverse problem.
\begin{itemize}
	\item [{\bf(IP1)}]{Is it possible to determine the semilinear model \eqref{heat_semi} given the local boundary measurements $\mathcal N_a^{\Gamma}$?}
\end{itemize}
Let us mention that the reaction-diffusion model \eqref{heat_semi} arises frequently in chemistry, biology, geology, physics and ecology, including the dynamics of biological populations \cite{FIS}, the Rayleigh-B\'{e}nard convection \cite{NW} or models appearing in combustion theory \cite{Vol14,YBZFK38}. The inverse problem (IP1) may be viewed as the question of whether or not it is possible to determine the governing physical law inside a medium, given the knowledge of diffusion fluxes on a part of its boundary. 

The literature of inverse problems for (IP1) has mostly been concerned with the case where the observation set $\Gamma$ is equal to the full boundary. The first result in this direction is due to Isakov in \cite{Is1}, assuming that $\Gamma=\p \Omega$ and also that one is allowed to measure data at the final time $t=T$ on the entire domain $\Omega$, see \cite{COY,Is3} for further results and \cite{KiUh} for the state of the art result in the full boundary case of reaction-diffusion equations where the final time measurements are removed. To the best of our knowledge, there are no results on (IP1) that consider an arbitrary open set $\Gamma\subset \p \Omega$. The main goal of this article is to  give a resolution of (IP1) as follows.

\begin{theorem}
	\label{thm_semi}
	Let $M=(0,T)\times \Omega$ where $T>0$ and $\Omega\subset \R^n$ is a connected bounded domain with smooth boundary. Let $\Gamma \subset \p \Omega$ be a nonempty open set. Let $a_1,a_2$ be two functions on $\overline{M}\times \R$ that satisfy (H). Then,
	$$ \mathcal N^\Gamma_{a_1}= \mathcal N^{\Gamma}_{a_2} \implies a_1=a_2.$$
\end{theorem}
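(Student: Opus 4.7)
The plan is to combine higher-order linearization of the Dirichlet-to-Neumann map with the two semiclassical ingredients advertised in the abstract. Since $a_i$ satisfies $(H)$, we may expand
$$a_i(t,x,\mu)=\sum_{k=2}^{\infty} \frac{a_{i,k}(t,x)}{k!}\,\mu^k, \qquad a_{i,k}\in C^{\alpha/2,\alpha}(\overline M),$$
so the theorem reduces to proving $a_{1,k}=a_{2,k}$ on $\overline M$ for every $k\ge 2$, which I would do by induction on $k$.

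\textbf{Step 1: Higher-order linearization.} Fix boundary data $f_1,\dots,f_N\in\mathcal K_0$ with $\supp f_j\subset (0,T)\times\Gamma$ and consider the solution $u_i^{\varepsilon}$ of \eqref{heat_semi} with coefficient $a_i$ and Dirichlet data $\sum_j \varepsilon_j f_j$ for small $\varepsilon=(\varepsilon_1,\dots,\varepsilon_N)$. The first derivatives $v_j=\p_{\varepsilon_j}u_i^{\varepsilon}|_{\varepsilon=0}$ solve the free heat equation with data $f_j$, independently of $i$. Differentiating $k$ times and pairing with a solution $w$ of the backward adjoint heat equation with Dirichlet data supported in $(0,T)\times\Gamma$, the equality $\mathcal N^{\Gamma}_{a_1}=\mathcal N^{\Gamma}_{a_2}$ together with the inductive hypothesis $a_{1,j}=a_{2,j}$ for $2\le j<k$ yields the integral identity
$$\int_0^T\!\!\int_\Omega \bigl(a_{1,k}-a_{2,k}\bigr)\, v_1\cdots v_k\, w\,dx\,dt=0,$$
valid for all such test solutions $v_1,\dots,v_k,w$.

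\textbf{Step 2: Quasimodes and a weighted Laplace transform.} I would then specialize $v_1,\dots,v_k,w$ to the spherical quasimodes from ingredient (i): an asymptotic family depending on a small parameter $h>0$ and a complex frequency $\tau$, concentrating along a straight line segment $\gamma\subset\overline\Omega$ with endpoint on $\Gamma$, and solving the heat (respectively backward heat) equation up to a residual that is exponentially small in $h$. Inserting these into the identity of Step 1 and extracting the leading asymptotics in $h$ produces, for the difference $q:=a_{1,k}-a_{2,k}$, an identity of the form
$$\int_\gamma q\bigl(t(s),x(s)\bigr)\, \sigma(s;\tau)\,ds = O(e^{-c/h}),$$
where $\sigma$ is a semiclassical symbol depending on the complex frequency $\tau$. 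Letting $h\to 0$ turns this into the weighted Laplace transform along $\gamma$ from ingredient (ii), which must vanish for all admissible $\tau$.

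\textbf{Step 3: Inversion and the main obstacle.} The main obstacle, and the technical heart of the argument, is the inversion of this weighted Laplace transform. Because $\sigma$ depends on the complex parameter $\tau$, classical injectivity of the Laplace transform does not apply directly; instead, I would use the Phragm\'en-Lindel\"of principle to control the transform on large sectors of the $\tau$-plane, and then apply Gr\"onwall's inequality to propagate the resulting estimate along $\gamma$ and conclude $q|_\gamma\equiv 0$. Since every point of $\Omega$ lies on such a segment meeting $\Gamma$ transversally (using connectedness of $\Omega$ and smoothness of $\partial\Omega$), varying $\gamma$ gives $q\equiv 0$ on $\overline M$, closing the induction and completing the proof. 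The delicate point is ensuring that the exponentially small quasimode errors do not interact with the $\tau$-dependent growth of $\sigma$ and spoil the Phragm\'en-Lindel\"of estimate.
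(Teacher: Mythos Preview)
Your Step~1 is fine and matches the paper. The gaps are in Steps~2 and~3.

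\textbf{The paper does not plug quasimodes into all slots.} You propose to specialize \emph{all} of $v_1,\dots,v_k,w$ to spherical quasimodes. The paper instead fixes $v_1=\dots=v_{k-1}$ to be a single carefully chosen solution $u_1$ of the free heat equation, and only the remaining two slots (what becomes $v_k$ and $w$) are left free. This reduces the identity to the \emph{linearized} partial data problem $\mathcal S^\Gamma_{q_1}=\mathcal S^\Gamma_{q_2}$ with $q_j=\partial_\mu^k a_j(t,x,0)\,u_1^{k-1}$, to which Theorem~\ref{thm_lin} applies. Two ingredients you omit are essential for this reduction: (i) \emph{null controllability} is used to choose $f_1$ so that $u_1$ vanishes for $t<\delta$ and $t>T-\delta$, forcing $q_1=q_2$ on $\mathcal D_\delta$ --- without this the quasimode remainders cannot be constructed, since the ansatz $e^{\pm\tau^2 t}\mathcal U_\tau$ does not satisfy the initial/final conditions; (ii) the \emph{strong maximum principle} is used to guarantee $u_1>0$ on $(\delta,s)\times\Omega$, so that once $q_1=q_2$ one may divide and conclude $\partial_\mu^k a_1=\partial_\mu^k a_2$. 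If instead you insist on putting quasimodes in all $k+1$ slots, the product of $k$ forward and one backward quasimode carries a time weight $e^{(k-1)\tau^2 t}$ that does not cancel, and the entire asymptotic calculus of Section~\ref{sec_local} (written for a product of exactly two, with $\tau_1^2-\tau_2^2=4\lambda$) would have to be redone from scratch.

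\textbf{The quasimode step is local, and your global argument fails.} The spherical quasimodes concentrate on spheres of radius $r\in(\varepsilon_0,2\varepsilon_0)$ centred at $x_0=p+\varepsilon_0\nu(p)$, and the weighted Laplace transform (Lemma~\ref{lem_final_est}--Lemma~\ref{Q_zero}) only yields vanishing of $q$ on the short radial interval $r\in(\varepsilon_0,\varepsilon_0+\varepsilon_2)$ with $\varepsilon_2\ll\varepsilon_0$; that is, in a small neighbourhood $\mathcal V$ of $p$. Your assertion that ``every point of $\Omega$ lies on such a segment meeting $\Gamma$ transversally'' is false in general (think of a non-convex $\Omega$ or a tiny $\Gamma$), and in any case the transform is only inverted on a segment of length $\varepsilon_2$, not all the way across $\Omega$. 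The paper passes from the local result (Theorem~\ref{thm_density_loc}) to the global one (Theorem~\ref{thm_density}) by a layer-stripping argument based on Runge approximation (Lemma~\ref{lem_Runge}): one repeatedly enlarges the known region along a curve from $p$ to an arbitrary interior point, at each step applying the local theorem on a new domain $\Omega_1$. This step is indispensable and absent from your outline.
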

The proof of the above theorem will rely on several properties of heat equations such as null-controllability of its solutions from the boundary, maximum principle, parabolic regularity and a key ingredient given by a novel uniqueness result for linearized partial data inverse problems subject to heat equations that we prove in the present article. To state our results in this direction, we start fixing some notations. Let $q\in L^{\infty}(M)$ and consider the following heat equation 
\begin{equation}\label{heat}
	\begin{aligned}
		\begin{cases}
		\p_t u - \Delta u + q(t,x)\,u=0 
			&\text{on  $M$},
			\\
			u=f & \text{on $\Sigma=(0,T)\times \p \Omega$},
			\\
			u|_{t=0}=0 & \text{on $\Omega$}.
		\end{cases}
	\end{aligned}
\end{equation}
For existence and uniqueness of solutions to the above PDE, we will use the standard mixed Sobolev spaces
$$ H^{r,s}(M)= H^r(0,T;L^2(\Omega)) \cap L^2(0,T;H^s(\Omega))$$
and 
$$H^{r,s}(\Sigma)= H^r(0,T;L^2(\Sigma)) \cap L^2(0,T;H^s(\Sigma))$$
for any $r,s\geq 0$. It is classical (see e.g. \cite[Section 2]{LM}) that given any Dirichlet boundary data $f$ in the space
$$ \mathcal H(\Sigma) = \{ h\in H^{\frac{3}{4},\frac{3}{2}}(\Sigma)\,:\, h|_{t=0}=0  \},$$
equation \eqref{heat} admits a unique solution 
$$u\in H^{1,2}(M) \qquad \text{with} \qquad \p_\nu u \in H^{\frac{1}{4},\frac{1}{2}}(\Sigma),$$
where $\nu$ is the exterior unit normal vector field on $\p \Omega$. Let $\Gamma \subset \p\Omega$ be a nonempty open set. We define the partial data Dirichlet-to-Neumann map 
$$\Lambda_q^\Gamma :  \mathcal H(\Sigma) \to  H^{\frac{1}{4},\frac{1}{2}}((0,T)\times \Gamma)  $$ defined as 
$$ \Lambda^{\Gamma}_{q}(f)= \p_\nu u|_{(0,T)\times \Gamma} \qquad \forall\, f\in \mathcal H(\Sigma) \quad \text{with} \quad \supp f \subset (0,T]\times \Gamma,$$
where $u$ is the unique solution to \eqref{heat} subject to Dirichlet boundary data $f$. The following natural inverse problem arises:
\begin{itemize}
	\item [{\bf (IP2)}]{Is the map $q \mapsto \Lambda_q^\Gamma$ injective?}
\end{itemize}

Akin to (IP1), when the observation set $\Gamma$ is equal to the full boundary $\p \Omega$, (IP2) is well understood, as well as more general variations of it that allow variable leading order coefficients in \eqref{heat}, see e.g. \cite{Is0,Is4,CK1,Fei}.  However, results are rather sparse when $\Gamma$ is a proper subset of the boundary. In physical experiments, it is very typical that we only have access to very small subsets of the boundary of a medium and thus it is fundamentally important to address (IP2) with possibly very small subsets of the full boundary. In this direction, we mention that if the observation set $\Gamma$ is relatively large, roughly more than half the size of $\p \Omega$, it is possible to determine the zeroth order coefficient $q$. Such results can be found in \cite{CK1,CK2} where the authors  considered partial data results in the spirit of the earlier work \cite{BU} with, roughly speaking, the excitation $f$ restricted to one half of the boundary and measurements of the flux on the other half of the boundary. In the same spirit, we can mention the recent work of \cite{KLL} who considered (IP1) and (IP2) with restriction of the flux to a part of the boundary illuminated by an arbitrary chosen point outside the domain, as considered by the earlier work \cite{KSU} for the Calder\'on problem.  We also mention that if the coefficient $q=q(x)$ is assumed to be independent of time, uniqueness is known for arbitrary open sets $\Gamma$ \cite{CK}. The latter result uses the time-independence of the coefficients to transform the inverse problem for a more general version of \eqref{heat} to the Gel'fand inverse spectral problem \cite{Gelfand} that has been resolved in \cite{Bel1,BK92}, see also the survey article \cite{Bel2}. 

In this paper, as a first step of understanding (IP2) for arbitrary open sets $\Gamma\subset \p \Omega$, we propose to consider the linearized version of (IP2) by considering the Fr\'{e}chet derivative of the Dirichlet-to-Neumann map $\Lambda_q$ evaluated at $q=0$. Precisely, for each $q\in L^{\infty}(M)$ we define
$$ \mathcal S_q^\Gamma = \frac{d}{ds}\Lambda_{sq}^\Gamma\, \quad \text{evaluated at $s=0$},$$
and consider the following inverse problem:
\begin{itemize}
	\item [{\bf (IP3)}]{Is the map $q \mapsto \mathcal S_{q}^\Gamma$ injective?}
\end{itemize}
The study of linearized versions of Dirichlet-to-Neumann maps is important as it may shed some light on the original inverse problem. To further illustrate this view, we recall the well known Calder\'{o}n problem in electrical impedance tomography that is concerned with recovering electrical conductivity of a medium, given voltage and current measurements on its boundary, see e.g. \cite{Uhl} for a survey of this well known inverse problem. Akin to (IP2), in the context of the Calder\'{o}n problem, complete results are available when the observation set $\Gamma$ is equal to the full boundary but much less is known about the partial data problems $\Gamma\subset \p \Omega$ in dimensions three and higher, see e.g. \cite{KS} for the state of the art result. As such, many authors have looked at linearized versions of the Calder\'{o}n problem with the hope to gain a better insight into the fully nonlinear problem. Indeed, Calder\'{o}n's original paper on the subject gave a resolution of the linearized version with full boundary data, by introducing a family of explicit harmonic functions known as complex geometric optics \cite{Calderon}. The resolution of Calder\'{o}n's problem for the case of full boundary data ultimately relied fundamentally on the notion of generalizations of these special solutions \cite{SU}. We also mention the works \cite{DKSU} and \cite{SU} on linearized Calder\'{o}n problem with measurements on arbitrary small subsets of the boundary as well as the works \cite{GH} on numerical analysis of such problems and finally \cite{DKLLS, KLS,GST} for consideration of the full data linearized Calder\'{o}n problem on more general Riemannian or complex manifolds. We also refer the reader to \cite{KuSa} for the study of linearized Calder\'{o}n problem for polyharmonic elliptic operators. 

Motivated by such considerations in the Calder\'{o}n problem, we give a resolution of (IP3) under the assumption that $q$ is known in a neighbourhood of initial and final times, namely on the set 
$$ \mathcal D_\delta= \{(t,x) \in (0,T)\times \Omega\,:\, t<\delta \quad \text{or} \quad t>T-\delta \},$$
for some $\delta>0$. Precisely, we prove the following theorem. 
\begin{theorem}
	\label{thm_lin}
	Let $M=(0,T)\times \Omega$ where $T>0$ and $\Omega\subset \R^n$ is a connected bounded domain with smooth boundary. Let $\Gamma \subset \p\Omega$ be a nonempty open set. Let $q_1,q_2\in L^{\infty}(M)$ satisfy $q_1=q_2$ on $\mathcal D_\delta$ for some $\delta>0$. Then,
	$$ \mathcal S^\Gamma_{q_1}= \mathcal S^{\Gamma}_{q_2} \implies q_1=q_2.$$
\end{theorem}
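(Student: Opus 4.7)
Set $q=q_1-q_2$, so that $q$ vanishes on $\mathcal D_\delta$ by hypothesis. A direct computation shows that $\mathcal S_q^\Gamma(f)=\p_\nu v|_{(0,T)\times\Gamma}$, where $v$ solves $(\p_t-\Delta)v=-qu_0$ with $v|_{t=0}=0$ and $v|_\Sigma=0$, and $u_0$ is the solution of the free heat equation with Dirichlet data $f\in\mathcal H(\Sigma)$ supported in $(0,T]\times\Gamma$. Hence the hypothesis $\mathcal S_{q_1}^\Gamma=\mathcal S_{q_2}^\Gamma$ gives $\p_\nu v|_{(0,T)\times\Gamma}=0$ for every admissible $f$. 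Testing against a solution $\psi$ of the backward heat equation $(-\p_t-\Delta)\psi=0$ with $\psi|_{t=T}=0$ and boundary data supported in $[0,T)\times\Gamma$, a standard integration by parts (using $v|_{t=0}=v|_\Sigma=0$ together with $\p_\nu v|_{(0,T)\times\Gamma}=0$) produces the orthogonality identity
\begin{equation*}
\int_M q(t,x)\,u_0(t,x)\,\psi(t,x)\,dt\,dx=0
\end{equation*}
valid for every admissible pair $(u_0,\psi)$.

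To exploit this identity, the plan is to construct rich families of asymptotic solutions of the form
\begin{equation*}
u_\lambda(t,x)=e^{\lambda^2 t-\lambda|x-x_0|}\,a_\lambda(t,x),\qquad
\psi_\mu(t,x)=e^{-\mu^2 t-\mu|x-x_0|}\,b_\mu(t,x),
\end{equation*}
parametrised by complex frequencies $\lambda,\mu$ with $\mathrm{Re}\,\lambda,\mathrm{Re}\,\mu\to+\infty$, where $x_0\in\R^n\setminus\overline\Omega$ is chosen so that its closest point $x^*$ on $\p\Omega$ lies in $\Gamma$. Since $\phi(x)=|x-x_0|$ satisfies the eikonal identity $|\nabla\phi|=1$, a standard WKB hierarchy reduces the construction of the amplitudes $a_\lambda,b_\mu$ to transport equations along the radial rays emanating from $x_0$, producing quasimodes that satisfy the forward and backward heat equations modulo a remainder of size $O(e^{-c\,\mathrm{Re}\,\lambda})$. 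Localising the amplitudes to a narrow angular cone around a direction $\omega$ pointing from $x_0$ into $\Omega$ and exploiting the exponential decay of $e^{-\mathrm{Re}\,\lambda\,|x-x_0|}$ along the boundary away from $x^*$, one cuts the quasimodes off so that their Dirichlet traces are essentially supported in $\Gamma$; after correcting with bounded heat equation solutions furnished by the parabolic well-posedness theory, this produces admissible test functions with the same leading exponential structure.

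Substituting into the orthogonality identity and choosing $\lambda,\mu$ so that $\lambda+\mu=2\sigma\to+\infty$ while $\lambda-\mu$ plays the role of a complex frequency dual to the time variable, the product $u_\lambda\psi_\mu$ factorises as $e^{(\lambda^2-\mu^2)t-2\sigma|x-x_0|}\,a_\lambda\,b_\mu$. Passing to polar coordinates centred at $x_0$ and integrating out the angular variables by stationary phase (exploiting the angular concentration of the amplitudes) yields, modulo exponentially small remainders, a one-dimensional weighted Laplace transform of $q$ along the line segment $\gamma$ cut by $\Omega$ out of the ray $\{x_0+s\omega:s>0\}$, with a weight that is a semiclassical symbol in $\sigma$ whose structure depends on the complex frequency $\lambda-\mu$. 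The hardest part, which I expect to be the main obstacle, is to deduce from the vanishing of this weighted Laplace transform that $q$ itself vanishes on $\gamma$: my plan is to view the transform as a holomorphic function of the complex frequencies in an appropriate sector, apply the Phragm\'en--Lindel\"of principle together with sharp growth bounds on the symbol, and use Gr\"onwall's inequality to absorb the contributions of the $\sigma$-dependent lower order terms of the weight. The hypothesis $q=0$ on $\mathcal D_\delta$ is precisely what permits the contour manipulations in the time-dual frequency and controls the small errors coming from the non-vanishing initial and final traces of $u_\lambda$ and $\psi_\mu$. Once $q$ is shown to vanish along every such $\gamma$, one varies $x_0$ and $\omega$ so that $\gamma$ sweeps out all of $\Omega$ and concludes $q\equiv 0$ on $M$.
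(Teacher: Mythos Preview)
Your overall strategy matches the paper's closely: the orthogonality identity, the spherical quasimodes with real phase $|x-x_0|$ for $x_0\notin\overline\Omega$, the reduction to a weighted Laplace transform in the radial variable, and the Phragm\'en--Lindel\"of plus Gr\"onwall argument to invert that transform are all present in the paper's proof. However, there are two genuine gaps.

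First, the angular analysis. You propose to concentrate the amplitude in a narrow angular cone and ``integrate out the angular variables by stationary phase''. Since the phase $|x-x_0|$ is purely radial and real-valued, there is no angular oscillation to exploit; what is actually needed is a family of angular amplitudes rich enough to separate directions after the radial analysis. The paper takes the angular amplitude to be a solution $Y_{\sigma,\omega}$ of $(-\Delta_{\mathbb S^{n-1}_+}+\sigma^2)Y=0$ on the hemisphere, keeps $\sigma\in[0,1]$ and $\omega$ as free parameters throughout the radial Laplace-transform argument, and only at the very end invokes a completeness result for such solutions (analytic continuation in $\sigma$, together with density of the associated Poisson extensions) to conclude that the angular integral vanishes pointwise in $\theta$.

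Second, and more seriously, your final step ``one varies $x_0$ and $\omega$ so that $\gamma$ sweeps out all of $\Omega$'' does not work. The error terms coming from the cutoff and the remainder in the quasimode construction are of size $e^{-(2\varepsilon_0+2\varepsilon_2)\tau}$ with $\varepsilon_2$ a fixed small number, so the Laplace-transform argument only yields $q=0$ on the thin shell $\varepsilon_0<|x-x_0|<\varepsilon_0+\varepsilon_2$, i.e.\ in a small neighborhood of the entry point $p\in\Gamma$; it does \emph{not} give vanishing along the full ray. Since $\Gamma$ may be an arbitrarily small patch, varying $x_0$ near $\Gamma$ still only produces vanishing in a neighborhood of $\Gamma$ in $\overline\Omega$. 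The paper closes this gap by a layer-stripping argument: once $q$ is known to vanish on a neighborhood of $p$, one excises that neighborhood from $\Omega$, uses a Runge approximation property for the heat equation to transfer the orthogonality identity to the smaller domain $\Omega_1$, and reapplies the local result at a point of the new accessible boundary $\p\Omega_1\setminus\p\Omega$. Iterating along a curve from $p$ to an arbitrary interior point propagates the vanishing throughout $\Omega$. This local-to-global mechanism is essential and is missing from your outline.
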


Let us remark that the assumption on the coefficient $q$ being known in a neighbourhood of initial and final times may be relaxed to it being independent of time in a small neighbourhood of initial and final times. The proof of Theorem~\ref{thm_lin} will rely on several arguments such as approximate controllability for heat equations and a  key ingredient given by  a completeness result for products of solutions to heat equations that we prove also in the present article. This result about completeness  for products of solutions to heat equations can be stated as follows.

\begin{theorem}
	\label{thm_density}
		Let $M=(0,T)\times \Omega$ where $T>0$ and $\Omega\subset \R^n$ is a connected bounded domain with smooth boundary. Let $\Gamma\subset \p\Omega$ be a nonempty open set and let $p\in \Gamma$. Let $q\in L^{\infty}(M)$ satisfy $q=0$ on $\mathcal D_\delta$ for some $\delta>0$. Suppose that 
	\begin{equation}\label{density_eq} \int_\delta^{T-\delta}\int_\Omega q(t,x)\, u_1(t,x)\,u_2(t,x) \,dx\,dt =0,\end{equation}
	for any pair $u_1\in C^{\infty}([\delta,T]\times \overline\Omega)$ and $u_2 \in C^{\infty}([0,T-\delta]\times \overline\Omega)$ that respectively satisfy 
	$$ \p_t u_1 -\Delta u_1=0 \quad \text{on $(\delta,T)\times \Omega$} \quad \text{with} \quad \supp (u_1|_{[\delta,T]\times \p \Omega})\subset [\delta,T]\times \Gamma,$$
	and 
	$$ \p_t u_2 +\Delta u_2=0 \quad \text{on $(0,T-\delta)\times \Omega$} \quad \text{with} \quad \supp (u_2|_{[0,T-\delta]\times \p \Omega})\subset [0,T-\delta]\times \Gamma.$$
	Then $q=0$ on $M$.
\end{theorem}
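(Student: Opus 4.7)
My plan is to test the identity~\eqref{density_eq} against a family of concentrating \emph{spherical quasimodes}: approximate solutions of the forward and backward heat equations that concentrate along a line segment $\gamma \subset \overline{\Omega}$ with one endpoint at a fixed boundary point $p \in \Gamma$, depend holomorphically on a complex spectral parameter $\lambda$, and have boundary traces supported in $\Gamma$ and in time intervals on which $q$ may be nonzero. Substituting such quasimodes into~\eqref{density_eq} and performing an asymptotic analysis in a small semiclassical parameter $h \to 0$ will collapse the identity into the vanishing of a holomorphic function $F(\lambda)$ of the form of a weighted Laplace transform of $q$ along $\gamma$. Phragm\'en--Lindel\"of and Gr\"onwall will then let me invert this Laplace transform and deduce $q \equiv 0$ along $\gamma$; moving $p$ and the direction of $\gamma$ sweeps through all of $\Omega$ and finishes the proof.

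More concretely, for $p \in \Gamma$ and a unit inward vector $\omega$ at $p$, set $\gamma(s) = p + s\omega$ and seek $u_1,u_2$ in WKB form
$$ u_j(t,x) \;=\; \chi(t,x)\, e^{i h^{-1}\Phi_j(t,x,\lambda)}\, a_j(t,x,h,\lambda) \;+\; r_j^h(t,x),$$
where the complex phases $\Phi_j$ solve the eikonal equation for $\pm\p_t - \Delta$ and have imaginary parts whose transverse Hessians are positive definite and vanish precisely on $\gamma$ (hence ``spherical'': the level sets look like shrinking spheres around $p$). Classical transport equations determine $a_j$ as an asymptotic series in $h$, the smooth cutoff $\chi$ together with parabolic well-posedness provides a remainder $r_j^h$ that is exponentially small in $h$ uniformly in $\lambda$ on a sector $\Sigma_0 \subset \C$, and the hypothesis $q = 0$ on $\mathcal{D}_\delta$ allows time-localization to $[\delta,T-\delta]$ while keeping the spatial boundary support inside $\Gamma$.

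Inserting into~\eqref{density_eq} and applying stationary phase in the directions transverse to $\gamma$ reduces the integral, modulo errors exponentially small in $h$, to
$$ F(\lambda) \;=\; \int_\delta^{T-\delta}\!\!\int_0^L q(t,\gamma(s))\, b(t,s,\lambda)\, e^{-\lambda \psi(t,s)}\,ds\,dt \;=\; 0,$$
where $\psi$ is a real phase from the eikonal construction and $b(\cdot,\lambda)$ is a classical semiclassical symbol, holomorphic and of controlled growth in $\lambda \in \Sigma_0$. Since $F$ is holomorphic on a full half-plane with growth on the boundary rays controlled by the quasimode construction, Phragm\'en--Lindel\"of forces $F \equiv 0$. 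Expanding $b$ in inverse powers of $\lambda$ and applying Gr\"onwall's inequality to the resulting Volterra-type cascade of weighted moments strips the symbol $b$ off, reducing the statement to a genuine one-variable Laplace transform whose injectivity yields $q(t,\gamma(s)) = 0$ for $(t,s) \in [\delta, T-\delta] \times [0,L]$.

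The main obstacle I foresee is the joint asymptotics in $h$ and $\lambda$. The WKB remainder $r_j^h$ has to be exponentially small \emph{uniformly} on a complex sector wide enough for Phragm\'en--Lindel\"of to apply, and the symbol $b(t,s,\lambda)$ must be controlled both from above (to run the complex-analytic argument) and effectively from below (so that the Gr\"onwall step legitimately extracts $q$). Matching these two requirements forces a careful choice of the complex phases $\Phi_j$ together with a delicate uniform-in-$\lambda$ parabolic estimate on the remainder, and I expect this to be the technical heart of the proof.
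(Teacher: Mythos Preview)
Your high-level strategy---quasimodes, a reduction to a weighted Laplace transform, Phragm\'en--Lindel\"of, then Gr\"onwall---matches the paper, but two substantive points differ, one of which is a genuine gap.

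First, the quasimode construction. You propose complex phases $\Phi_j$ with Gaussian-beam-type transverse concentration along a segment $\gamma$, reducing via stationary phase. The paper instead uses a purely \emph{real} phase: it fixes a point $x_0=p+\varepsilon_0\nu(p)$ just outside $\Omega$ and takes $\mathcal U_\tau^{(j)}=e^{-\tau r}A_\tau^{(j)}(r)Y_{\sigma_j,\omega_j}(\theta)$ in polar coordinates $(r,\theta)$ centred at $x_0$, paired with the time factor $e^{\pm\tau^2 t}$. The level sets are spheres about $x_0$, not tubes about a line, and the concentration comes from the decay of $e^{-2\tau r}$ rather than from stationary phase. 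The analysis then reduces not to an integral along $\gamma$ but to a one-dimensional Laplace transform in the radial variable $r$ over a short interval $(\varepsilon_0,\varepsilon_0+\varepsilon_2)$, with the symbol $\sum_k b_k(r)\tau^{-k}$ obeying analytic estimates $\|b_k\|_\infty\leq C(4k/\varepsilon_0)^k$. Your complex-phase eikonal ansatz $e^{ih^{-1}\Phi}$ is not obviously compatible with the parabolic scaling (the paper's cancellation $\tau^2 t-\tau r$ is essential), and the paper explicitly motivates the real spherical construction by the lack of a Kelvin-type transform for heat equations. This is not necessarily fatal, but the eikonal/transport equations you would actually have to solve for $\pm\p_t-\Delta$ are not the ones you sketch.

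Second, and more seriously, your global step ``moving $p$ and the direction of $\gamma$ sweeps through all of $\Omega$'' does not work. The quasimode argument in the paper only yields $q=0$ on a \emph{small} neighbourhood $\mathcal V$ of $p$ (concretely, the shell $\varepsilon_0<r<\varepsilon_0+\varepsilon_2$): the exponential weight $e^{-2\tau r}$ gives useful asymptotics only near the minimal radius, so you do not get vanishing along an entire segment $[0,L]$. Even if you did, straight segments from $\Gamma$ will not cover a non-convex $\Omega$. The paper closes this gap with a Runge approximation lemma for the forward and backward heat equations (density of solutions on a larger domain among solutions on a smaller one vanishing on a common boundary piece) and a layer-stripping argument along a curve from $p$ to an arbitrary interior point, iterating the local result on shrinking subdomains $\Omega_1\subset\Omega$. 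You need an analogous propagation mechanism; without it the proof is incomplete.
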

 
Let us remark that a possible choice for $u_1$ and $u_2$ in the above theorem is harmonic functions whose boundary traces are supported on the set $\Gamma$. Under such a choice and by applying the main result of \cite{DKSU} one can immediately deduce that the integral identity \eqref{density_eq} yields that the time average of the unknown function $q$ must be zero at each spatial point, that is to say, 
$$ \int_{0}^T q(t,x)\,dt=0 \quad \forall\, x\in \Omega.$$
Clearly, this is not sufficient to conclude that $q$ must vanish everywhere. Indeed, our proof of the above completeness property will rely partly on introducing a suitable family of solutions to heat equations that we will term as {\em spherical quasimodes} as well as the careful construction of remainder terms for such quasimodes. Although some of the arguments in the proof of Theorem~\ref{thm_density} are similar in spirit to analytic microlocal analysis used for solving the linearized Calder\'{o}n problem (see e.g.  \cite{DKSU,SU}), there are also key distinctions. For instance, in the case of the heat equation \eqref{heat} one does not have analogues of Kelvin transform for harmonic functions that can transform the domain into one with locally convex boundary. The latter transform was an important tool in the elliptic linearized Calder\'{o}n problem studied in \cite{DKSU} as it allowed the authors to work with explicit family of complex geometric optics. To overcome such a difficulty, we introduce an exponential quasimode construction for heat equations with real valued phase and amplitude terms whose level sets are spheres, allowing us to penetrate from any point of the boundary, irrespective of its lack of convexity. We do not impose analyticity on the boundary of the domain either. Another key difference is that although in the works \cite{DKSU,SU} on the linearized Calder\'{o}n problem, the analysis eventually relates to the Fourier--Bros--Iagolnitzer transform of the unknown coefficient in the phase space, in the case of the heat equation, (since the weight of the exponential quasimode is purely real-valued), our analysis eventually reduces to the study of a weighted Laplace transform along a geodesic of the form
$$ \int_I q(r) e^{\tau t}(a_0(r) + \tau^{-1} 
a_1(r)+\tau^{-2}a_2(r)+...)dr$$
where $I$ is an interval on a line segment with one end on the boundary of the domain and $\{a_k(r)\}_{k=0}^{\infty}$ is a formal analytic symbol satisfying the bounds $\|a_k\|_{L^{\infty}(I)}\leq C^{1+k}\,k!$. We need to analyze the asymptotic properties of the latter weighted Laplace transform in some detail. This may be viewed as the crux of the argument in the proof of a local version of Theorem~\ref{thm_density} that is proved in Section~\ref{sec_local}. We believe that the analysis of the latter transform is new in inverse problems which reduces to a combination of Phragm\'{e}n-Lindel\"{o}f principle and Gr\"{o}nwall inequalities to uniquely recover $q$ pointwise in the interior of the domain.

We close this section by mentioning that the link between (IP1) and (IP2) lies in a linearization method that was first introduced by Isakov in \cite{Is1}. An important variant of this method known as {\em higher order linearization} method was introduced in \cite{KLU} by Kurylev, Lassas and the third author in the context of hyperbolic equations. We mention that in recent years, linearization methods have been an extremely successful tool in the study of inverse problems for nonlinear equations, see for example \cite{FKU,FO,KLL, KiUh,KU1,KU2,LLLS,LLLS2} and the references therein. 
 
\subsection{Organization of the paper}
We begin with Section~\ref{sec_prelim} that is concerned with understanding the linearized Dirichlet-to-Neumann map, in the sense of the direct problem. In Section~\ref{sec_local} we will prove a local version of the completeness property for products of solutions to linear heat equations with supports restricted to arbitrary portions of the boundary. We will begin by introducing a family of spherical quasimode solutions to heat equations that will be utilized in the proof of the completeness property followed by a careful asymptotic analysis of products of such quasimodes. In Section~\ref{sec_global}, we will use Runge's approximation property to march into the domain and turn the local completeness result to a global one, thus completing the proof of Theorem~\ref{thm_density} as well as Theorem~\ref{thm_lin}. Finally, we will prove Theorem~\ref{thm_semi} in Section~\ref{sec_semi}.

\section{Preliminaries}
\label{sec_prelim}
We begin this section by stating two initial boundary value problems that we will use throughout the remainder of this section. These are stated as follows.
\begin{equation}\label{eq_free}
	\begin{aligned}
		\begin{cases}
			\p_t u - \Delta u=0 
			&\text{on  $M=(0,T)\times \Omega$},
			\\
			u=f & \text{on $\Sigma=(0,T)\times \p \Omega$},
			\\
			u|_{t=0}=0 & \text{on $\Omega$},
		\end{cases}
	\end{aligned}
\end{equation}
and its adjoint equation
\begin{equation}\label{eq_free_adjoint}
	\begin{aligned}
		\begin{cases}
			\p_t u + \Delta u=0 
			&\text{on  $M=(0,T)\times \Omega$},
			\\
			u=h & \text{on $\Sigma=(0,T)\times \p \Omega$},
			\\
			u|_{t=T}=0 & \text{on $\Omega$}.
		\end{cases}
	\end{aligned}
\end{equation}
For the adjoint equation \eqref{eq_free_adjoint}, we note that given any 
$$ h \in \mathcal H^\star(\Sigma)=  \{ g\in H^{\frac{3}{4},\frac{3}{2}}(\Sigma)\,:\, g|_{t=T}=0  \},$$
there exists a unique solution $u \in H^{1,2}(M)$. Our aim in the remainder of this section is to first give an explicit derivation of the Fr\'{e}chet derivative $\mathcal S^\Gamma_q$ followed by a characterization of the injectivity of the Fr\'{e}chet derivative in terms of a completeness question for products of solutions to heat equations. To this end, let us consider $f\in \mathcal H(\Sigma)$ with $\supp f\subset (0,T]\times \Gamma$ and denote by $u_s$ the unique solution to \eqref{heat} with $q$ replaced by $sq$. Then, $u_s$ depends smoothly on $s$ taking values in $H^{1,2}(M)$ and therefore, 
$$ u_s = u_0 + s v + O(s^2) \quad \text{for all $|s|\leq 1$},$$
where $u_0$ solves \eqref{eq_free} with Dirichlet boundary data $f$ and $v$ is the unique solution to 
\begin{equation}\label{eq_free_source}
	\begin{aligned}
		\begin{cases}
			\p_t v - \Delta v= -qu_0 
			&\text{on  $M=(0,T)\times \Omega$},
			\\
			v=0 & \text{on $\Sigma=(0,T)\times \p \Omega$},
			\\
			v|_{t=0}=0 & \text{on $\Omega$}.
		\end{cases}
	\end{aligned}
\end{equation}
Therefore, we have 
\begin{equation}\label{Frechet_der} 
	\mathcal S^\Gamma_q(f) = \p_\nu v|_{(0,T)\times \Gamma} \quad \forall\, f\in \mathcal H(\Sigma)\quad \text{with}\quad \supp f \subset (0,T]\times \Gamma.
	\end{equation}
We have the following two lemmas about the linearized Dirichlet-to-Neumann map.
\begin{lemma}
	\label{lem_DN_map_Frechet}
	Let $q_1,q_2\in L^{\infty}(M)$. Given any 
	$$(f,h) \in C^{\infty}_0((0,T]\times \Gamma)\times C^{\infty}_0([0,T)\times \Gamma),$$  there holds:
	$$ \int_{(0,T)\times \Gamma} h\,(\mathcal S^\Gamma_{q_1}-\mathcal S^\Gamma_{q_2})(f)\,dt\,d\sigma = \int_{M} (q_1-q_2)\,w_1 \,w_2\,dt\,dx,$$ 
	where $w_1$ and $w_2$ are the unique solutions to \eqref{eq_free} and \eqref{eq_free_adjoint} with Dirichlet boundary datum $f$ and $h$ respectively.
\end{lemma}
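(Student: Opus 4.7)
The plan is to derive the integral identity by a direct parabolic integration by parts argument, using the explicit representation of the Fréchet derivative given in \eqref{Frechet_der}. Let $v_1,v_2\in H^{1,2}(M)$ denote the unique solutions to \eqref{eq_free_source} with $q$ replaced by $q_1,q_2$ respectively, where in both cases $u_0$ is the solution of \eqref{eq_free} with Dirichlet datum $f$; note that by uniqueness this solution coincides with $w_1$. By \eqref{Frechet_der},
\[
(\mathcal S^\Gamma_{q_1}-\mathcal S^\Gamma_{q_2})(f)=\p_\nu v\big|_{(0,T)\times\Gamma}, \qquad v:=v_1-v_2,
\]
and $v\in H^{1,2}(M)$ solves the inhomogeneous initial boundary value problem
\[
\p_t v-\Delta v=-(q_1-q_2)\,w_1 \text{ on } M,\qquad v|_{\Sigma}=0,\qquad v|_{t=0}=0.
\]

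Next I would test this equation against the adjoint solution $w_2$ to \eqref{eq_free_adjoint} with Dirichlet datum $h$. Multiplying the PDE for $v$ by $w_2$, integrating over $M$, and applying Green's formula in the spatial variable together with integration by parts in time yields
\[
\int_M (\p_t v-\Delta v)\,w_2\,dt\,dx = -\int_M v\,(\p_t w_2+\Delta w_2)\,dt\,dx+\mathcal B,
\]
where $\mathcal B$ collects the boundary contributions. The temporal boundary terms at $t=0$ and $t=T$ vanish because $v|_{t=0}=0$ and $w_2|_{t=T}=0$. The spatial boundary term $\int_\Sigma v\,\p_\nu w_2\,dt\,d\sigma$ vanishes because $v|_\Sigma=0$, leaving only $\mathcal B=-\int_\Sigma w_2\,\p_\nu v\,dt\,d\sigma$. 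Since $w_2$ solves $\p_t w_2+\Delta w_2=0$ and $w_2|_\Sigma=h$, the identity reduces to
\[
-\int_M (q_1-q_2)\,w_1\,w_2\,dt\,dx=-\int_\Sigma h\,\p_\nu v\,dt\,d\sigma.
\]
Using that $\supp h\subset [0,T)\times\Gamma$, the right-hand integral equals $\int_{(0,T)\times\Gamma} h\,(\mathcal S^\Gamma_{q_1}-\mathcal S^\Gamma_{q_2})(f)\,dt\,d\sigma$, giving exactly the claimed formula.

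The only genuine issue to verify is that the integration by parts is justified at the claimed regularity: $v\in H^{1,2}(M)$ with $\p_\nu v\in H^{1/4,1/2}(\Sigma)$, and similarly $w_2\in H^{1,2}(M)$ with $\p_\nu w_2\in H^{1/4,1/2}(\Sigma)$. Since $f$ and $h$ are smooth and compactly supported in $(0,T]\times\Gamma$ and $[0,T)\times\Gamma$ respectively, parabolic regularity in fact gives smoothness of $w_1, w_2, v$ away from the corners $\{0\}\times\p\Omega$ and $\{T\}\times\p\Omega$, and the boundary data compatibility together with the vanishing of $v$ and $w_2$ at the opposite ends in time make all the pointwise boundary traces above well defined. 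A standard density/approximation argument (or appeal to the transposition framework of \cite{LM}) then makes the above manipulation rigorous, completing the proof. I expect the only mild obstacle to be organizing this trace justification cleanly; the algebra itself is a one-line duality computation.
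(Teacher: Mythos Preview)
Your proof is correct and follows exactly the same approach as the paper: define $v=v_1-v_2$, identify $(\mathcal S^\Gamma_{q_1}-\mathcal S^\Gamma_{q_2})(f)=\p_\nu v|_{(0,T)\times\Gamma}$, write the equation solved by $v$, and multiply by $w_2$ and integrate by parts. The paper simply records this last step in one sentence, whereas you have written out the boundary terms and added a discussion of the trace justification; the substance is identical.
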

\begin{proof}
In view of \eqref{Frechet_der} we have
$$ \mathcal S_{q_1}^\Gamma(f) = \p_\nu v_1|_{(0,T)\times \Gamma} \quad \text{and}\quad  \mathcal S_{q_2}^\Gamma(f) = \p_\nu v_2|_{(0,T)\times\Gamma},$$
where $v_j\in H^{1,2}(M)$, $j=1,2,$ is the unique solution to \eqref{eq_free_source} with $q=q_j$ and $u_0=w_1$. Let $v=v_1-v_2$ and note that 
$$(\mathcal S^\Gamma_{q_1}-\mathcal S^\Gamma_{q_2})(f)=\p_\nu v|_{(0,T)\times \Gamma}.$$
Note also that $v$ solves the following equation
\begin{equation*}
	\begin{aligned}
		\begin{cases}
			\p_t v - \Delta v= -(q_1-q_2)w_1 
			&\text{on  $M=(0,T)\times \Omega$},
			\\
			v=0 & \text{on $\Sigma=(0,T)\times \p \Omega$},
			\\
			v|_{t=0}=0 & \text{on $\Omega$}.
		\end{cases}
	\end{aligned}
\end{equation*}
Multiplying the above equation with $w_2$ and integrating by parts yields the claim.
\end{proof}

\begin{lemma}
	\label{lem_density}
	Let $\Gamma \subset \partial\Omega$ be a nonempty open set. Let $q_1,q_2 \in L^{\infty}(M)$ satisfy $q_1=q_2$ on $\mathcal D_\delta$ for some $\delta>0$. Then, 
	$$\mathcal S^\Gamma_{q_1}=\mathcal S^\Gamma_{q_2}\implies \int_\delta^{T-\delta}\int_\Omega (q_1-q_2)\, u_1\,u_2 \,dx\,dt =0,$$
	for any pair $u_1\in C^{\infty}([\delta,T]\times \overline\Omega)$ and $u_2 \in C^{\infty}([0,T-\delta]\times \overline\Omega)$ that respectively satisfy 
	$$ \p_t u_1 -\Delta u_1=0 \quad \text{on $[\delta,T]\times \Omega$} \quad \text{with} \quad \supp (u_1|_{[\delta,T]\times \p \Omega})\subset [\delta,T]\times \Gamma,$$
	and 
	$$ \p_t u_2 +\Delta u_2=0 \quad \text{on $(0,T-\delta)\times \Omega$} \quad \text{with} \quad \supp (u_2|_{[0,T-\delta]\times \p \Omega})\subset [0,T-\delta]\times \Gamma.$$
\end{lemma}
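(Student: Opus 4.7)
The plan is to deduce Lemma~\ref{lem_density} from the identity in Lemma~\ref{lem_DN_map_Frechet} via an approximation that replaces the restrictive test functions of that lemma (smooth boundary data, vanishing initial data on the full interval $[0,T]$) with the general $u_1,u_2$ appearing here, the key tool being approximate boundary controllability of the heat equation from $\Gamma$.

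First, Lemma~\ref{lem_DN_map_Frechet} together with the hypothesis $\mathcal S^\Gamma_{q_1}=\mathcal S^\Gamma_{q_2}$ gives
$$\int_M (q_1-q_2)\, w_1\, w_2\,dt\,dx=0$$
for every solution $w_1$ of \eqref{eq_free} with datum $f\in C^\infty_0((0,T]\times \Gamma)$ and every solution $w_2$ of \eqref{eq_free_adjoint} with datum $h\in C^\infty_0([0,T)\times \Gamma)$. Since $q_1-q_2$ vanishes on $\mathcal D_\delta$, this collapses to
$$\int_\delta^{T-\delta}\int_\Omega (q_1-q_2)\,w_1\,w_2\,dx\,dt=0.$$

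The heart of the argument is to approximate each $u_1$ as in the statement by an admissible $w_1^\epsilon$ from Lemma~\ref{lem_DN_map_Frechet}, strongly in $L^\infty([\delta,T];L^2(\Omega))$, and similarly for $u_2$. To build $w_1^\epsilon$: first extend $u_1|_{[\delta,T]\times \p\Omega}$ to a smooth function $F$ on $[0,T]\times \p\Omega$ that coincides with $u_1|_{\p\Omega}$ on $[\delta,T]\times \p\Omega$, vanishes in a neighborhood of $t=0$, and remains supported in $[0,T]\times \Gamma$ (using a Seeley extension in $t$ together with a smooth time cutoff). Let $v_F$ solve \eqref{eq_free} with datum $F$ and set $\phi:=u_1(\delta,\cdot)-v_F(\delta,\cdot)\in L^2(\Omega)$. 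By the classical approximate boundary controllability of the heat equation from any nonempty open $\Gamma\subset \p\Omega$ (dual via Hahn--Banach to unique continuation for the backward adjoint), for every $\epsilon>0$ there exists $g^\epsilon\in C^\infty_0((0,\delta)\times \Gamma)$ whose associated solution $v_{g^\epsilon}$ of \eqref{eq_free} satisfies
$$\|v_{g^\epsilon}(\delta,\cdot)-\phi\|_{L^2(\Omega)}<\epsilon.$$
Because $g^\epsilon$ vanishes in a full neighborhood of $\{\delta\}\times \p\Omega$, the sum $F+g^\epsilon$ lies in $C^\infty_0((0,T]\times \Gamma)$, so $w_1^\epsilon:=v_F+v_{g^\epsilon}$ is admissible for Lemma~\ref{lem_DN_map_Frechet}. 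On $[\delta,T]\times\Omega$, the difference $u_1-w_1^\epsilon$ solves the heat equation with zero lateral data and with $L^2$-norm at most $\epsilon$ at $t=\delta$, so a standard energy estimate yields $\|u_1-w_1^\epsilon\|_{L^\infty([\delta,T];L^2(\Omega))}=O(\epsilon)$. A symmetric construction applied to \eqref{eq_free_adjoint} on $[0,T-\delta]$ produces $w_2^\epsilon$ approximating $u_2$ in $L^\infty([0,T-\delta];L^2(\Omega))$.

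Plugging $(w_1^\epsilon,w_2^\epsilon)$ into the identity of the first step and letting $\epsilon\to 0$, using $q_1-q_2\in L^\infty(M)$ together with the uniform $L^\infty([\delta,T-\delta];L^2(\Omega))$ bounds on $w_i^\epsilon$, yields the desired conclusion. The main obstacle I expect is the controllability step: one must guarantee that the control $g^\epsilon$ can be chosen \emph{smooth} and \emph{compactly supported} in $(0,\delta)\times \Gamma$, so that pasting with $F$ preserves smoothness across $t=\delta$; this is handled by first invoking approximate controllability with controls in a suitable Sobolev class and then applying density of $C^\infty_0((0,\delta)\times \Gamma)$ inside that class together with continuity of the control-to-state map.
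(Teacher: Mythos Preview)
Your proposal is correct and follows essentially the same approach as the paper: start from Lemma~\ref{lem_DN_map_Frechet}, extend the boundary trace of $u_1$ (resp.\ $u_2$) to an admissible datum $F$ (the paper's $f_0$), use approximate controllability on $(0,\delta)\times\Gamma$ (resp.\ $(T-\delta,T)\times\Gamma$) to correct the state at $t=\delta$ (resp.\ $t=T-\delta$), apply an energy estimate on the remaining slab, and pass to the limit. The only cosmetic difference is that you carry the estimate in $L^\infty_t L^2_x$ while the paper uses the $L^2(0,T;H^1)$ energy bound and hence $L^2_{t,x}$ convergence; both suffice.
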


\begin{proof}
	Throughout this proof, for any pair $f\in C^{\infty}_0((0,T]\times \Gamma)$ and $h \in C^{\infty}_0([0,T)\times \Gamma)$ we write $v_f$ and $w_h$ to stand for the solutions to \eqref{eq_free} and \eqref{eq_free_adjoint} with Dirichlet boundary data $f$ and $h$ respectively. Since $\mathcal S^\Gamma_{q_1}=\mathcal S^\Gamma_{q_2}$ and $q_1=q_2$ on $\mathcal D_\delta$, it follows from Lemma~\ref{lem_DN_map_Frechet} that 
	\begin{equation}\label{density_full} \int_{\delta}^{T-\delta}\int_\Omega (q_1-q_2)v_f w_h\,dx\,dt=0.\end{equation}
	Let $f_0 \in C^{\infty}_0((0,T]\times \Gamma)$ and $h_0\in C^{\infty}_0([0,T)\times \Gamma)$ be chosen such that 
	$$ f_0|_{(\delta,T)\times \Gamma}= u_1|_{(\delta,T)\times \Gamma} \quad \text{and} \quad h_0|_{(0,T-\delta)\times \Gamma}= u_2|_{(0,T-\delta)\times \Gamma},$$
	and subsequently define $v_{f_0}$ and $w_{h_0}$ as above. By approximate controllability for the heat equation, there exists $\{f_k\}_{k=1}^{\infty} \subset C^{\infty}_0((0,\delta)\times \Gamma)$ and  $\{h_k\}_{k=1}^{\infty} \subset C^{\infty}_0((T-\delta,T)\times \Gamma)$ such that 
	$$ v_{f_k}|_{t=\delta} \to (u_1- v_{f_0})|_{t=\delta} \quad \text{ in $L^2(\Omega)$ as $k\to \infty$},$$ 
	and
	$$ w_{h_k}|_{t=T-\delta} \to (u_2- w_{h_0})|_{t=T-\delta} \quad \text{ in $L^2(\Omega)$ as $k\to \infty$}.$$
Fixing $v_k=(v_{f_k+f_0}-u_1)|_{(\delta,T)\times\Omega}$ and $w_k=(w_{h_k+h_0}-u_2)|_{(0,T-\delta)\times\Omega}$, we see that these functions solve the following problems
	\begin{equation*}
	\begin{aligned}
		\begin{cases}
			\p_t v_k - \Delta v_k= 0 
			&\text{on  $(\delta,T)\times \Omega$},
			\\
			v_k=f_0-u_1=0 & \text{on $(\delta,T)\times \p \Omega$},
			\\
			v_k|_{t=\delta}=v_{f_k}|_{t=\delta}-(u_1- v_{f_0})|_{t=\delta} & \text{on $\Omega$}.
		\end{cases}
	\end{aligned}
\end{equation*}
		\begin{equation*}
	\begin{aligned}
		\begin{cases}
			-\p_t w_k - \Delta w_k= 0 
			&\text{on  $(0,T-\delta)\times \Omega$},
			\\
			w_k=h_0-u_2=0 & \text{on $(0,T-\delta)\times \p \Omega$},
			\\
			w_k|_{t=T-\delta}=w_{h_k}|_{t=T-\delta}-(u_2- w_{h_0})|_{t=T-\delta} & \text{on $\Omega$}.
		\end{cases}
	\end{aligned}
\end{equation*}
By energy estimates for the heat equation, we deduce that there exists $C>0$, depending only on $\Omega$, $T$ and $\delta$, such that
$$\|v_k\|_{L^2(\delta,T;H^1(\Omega))}\leq C\|v_{f_k}|_{t=\delta}-(u_1- v_{f_0})|_{t=\delta}\|_{L^2(\Omega)},$$
$$\|w_k\|_{L^2(0,T-\delta;H^1(\Omega))}\leq C\|w_{h_k}|_{t=\delta}-(u_2- w_{h_0})|_{t=T-\delta}\|_{L^2(\Omega)}$$
and, sending $k\to\infty$, we obtain
	\begin{equation}\label{heat_sol_inf} \|v_{f_k+f_0}-u_1\|_{L^2((\delta,T)\times \Omega)} + \|w_{h_k+h_0}-u_2\|_{L^2((0,T-\delta)\times \Omega)} \to 0 \quad \text{as $k\to \infty$.}\end{equation}
	Observe in view of \eqref{density_full} that there holds:
	$$\int_{\delta}^{T-\delta}\int_\Omega (q_1-q_2)\,v_{f_0+f_k} \,w_{h_0+h_k}\,dx\,dt=0 \quad \forall\, k\in \N.$$ 
	The claim follows by letting $k$ approach infinity and using \eqref{heat_sol_inf}.
\end{proof}

\section{Local completeness result}
\label{sec_local}
The aim of this section is to prove a local version of Theorem~\ref{thm_density} that will be augmented later with a Runge approximation argument to obtain the global version. Precisely, in this section, we will prove the following theorem.
\begin{theorem}
	\label{thm_density_loc}
	Let $M=(0,T)\times \Omega$ where $T>0$ and $\Omega\subset \R^n$ is a connected bounded domain with smooth boundary. Let $\Gamma\subset \p\Omega$ be a nonempty open set and let $p\in \Gamma$. Let $q\in L^{\infty}(M)$ satisfy $q=0$ on $\mathcal D_\delta$ for some $\delta>0$. Suppose that 
	\begin{equation}\label{density_eq_loc} \int_\delta^{T-\delta}\int_\Omega q\, u_1\,u_2 \,dx\,dt =0,\end{equation}
	for any pair $u_1\in C^{\infty}([\delta,T]\times \overline\Omega)$ and $u_2 \in C^{\infty}([0,T-\delta]\times \overline\Omega)$ that respectively satisfy 
	$$ \p_t u_1 -\Delta u_1=0 \quad \text{on $(\delta,T)\times \Omega$} \quad \text{with} \quad \supp (u_1|_{[\delta,T]\times \p \Omega})\subset [\delta,T]\times \Gamma,$$
	and 
	$$ \p_t u_2 +\Delta u_2=0 \quad \text{on $(0,T-\delta)\times \Omega$} \quad \text{with} \quad \supp (u_2|_{[0,T-\delta]\times \p \Omega})\subset [0,T-\delta]\times \Gamma.$$
	Then, there is a neighborhood $\mathcal V$ of $p$ such that $q=0$ on $(0,T)\times (\mathcal V\cap \Omega)$.
\end{theorem}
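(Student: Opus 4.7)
The plan is to build a rich family of approximate (\emph{quasimode}) solutions to the forward heat equation and its adjoint whose boundary traces concentrate near $p$ on $[\delta,T-\delta]\times\Gamma$, feed them into the orthogonality identity \eqref{density_eq_loc}, and then extract pointwise information on $q$ by asymptotic analysis in a large parameter. First I would fix $x_0\in\R^n\setminus\overline{\Omega}$ close to $p$ so that the open ray from $x_0$ through $p$ first meets $\p\Omega$ at $p$; set $r=|x-x_0|$, $\omega=(x-x_0)/r$, and $\omega_*=(p-x_0)/|p-x_0|$. For a large real parameter $\lambda>0$ I would seek quasimodes
\begin{equation*}
U_1(t,x;\lambda)=e^{\lambda^2 t-\lambda r}\,a(t,r,\omega;\lambda),\qquad U_2(t,x;\lambda)=e^{-\lambda^2 t-\lambda r}\,b(t,r,\omega;\lambda),
\end{equation*}
with formal symbols $a\sim\sum_{k\geq 0}\lambda^{-k}a_k$ and $b\sim\sum_{k\geq 0}\lambda^{-k}b_k$. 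The real phases $\pm\lambda^2 t-\lambda r$ solve the Hamilton--Jacobi equation $\p_t\phi\mp|\nabla\phi|^2=0$ for the respective operators exactly, which reduces $(\p_t-\Delta)U_1$ and $(\p_t+\Delta)U_2$ to a hierarchy of radial transport equations along rays through $x_0$; the leading one, $2\p_r a_0+\tfrac{n-1}{r}a_0=0$, gives $a_0(t,r,\omega)=r^{-(n-1)/2}\chi(\omega)\psi_1(t)$, and the remaining $a_k$ (and similarly $b_k$) are determined recursively by radial integration of source terms involving $a_0,\dots,a_{k-1}$, leading to Gevrey-$1$ growth $\|a_k\|_{L^\infty}\leq C^{k+1}k!$. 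I would take $\chi\in C^\infty(S^{n-1})$ supported in a small angular neighborhood of $\omega_*$ (so the spatial support of the boundary traces lies in $\Gamma$ near $p$) and $\psi_1,\psi_2\in C^\infty_c((\delta,T-\delta))$ (handling the temporal support conditions).

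\textbf{Reduction to a weighted Laplace transform.} Truncating $a,b$ at order $N$ produces quasimodes whose residuals are $O(\lambda^{-N})$ relative to the exponential prefactor. Standard parabolic energy estimates then allow me to correct them by solutions of the heat equation with zero boundary data and zero initial trace (at $t=\delta$ and $t=T-\delta$ respectively), yielding genuine solutions $u_1,u_2$ of the type assumed in Theorem~\ref{thm_density_loc}, with $u_j-U_j^{(N)}$ of size $O(\lambda^{-N})$ relative to the quasimode in the natural $\lambda$-weighted norm. Plugging into \eqref{density_eq_loc}, the temporal exponentials cancel in the product $u_1u_2$; passing to spherical coordinates $dx=r^{n-1}dr\,d\omega$ (which absorbs the $r^{-(n-1)}$ coming from $a_0b_0$), shrinking the angular cutoff $\chi$ to $\omega_*$, and writing $\rho(t)=\psi_1(t)\psi_2(t)$ yields, for each $N$,
\begin{equation*}
\int_I\int_{\delta}^{T-\delta} q(t,x_0+r\omega_*)\,\rho(t)\,e^{-2\lambda r}\,\sigma(r,t;\lambda)\,dt\,dr=O(\lambda^{-N-1}),
\end{equation*}
where $I\subset(|x_0-p|,+\infty)$ parameterizes the segment of the ray lying inside $\Omega$, and $\sigma\sim 1+\sum_{k\geq 1}\lambda^{-k}\sigma_k$ is a classical formal analytic symbol inherited from the recursion, with $\|\sigma_k\|_{L^\infty}\leq C^{k+1}k!$.

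\textbf{Pointwise recovery and main obstacle.} Letting $\rho\in C^\infty_c((\delta,T-\delta))$ vary freely, the identity above says that the weighted Laplace transform
\begin{equation*}
F(\lambda;\rho):=\int_I \Bigl(\int_\delta^{T-\delta}\rho(t)\,q(t,x_0+r\omega_*)\,\sigma(r,t;\lambda)\,dt\Bigr)e^{-2\lambda r}\,dr
\end{equation*}
vanishes to infinite order in $\lambda^{-1}$ as $\lambda\to+\infty$. Extending $\lambda$ to a complex sector and using Phragm\'en--Lindel\"of to control the growth of $F$ along complex rays, I would peel off the symbol weight and run a Gr\"onwall iteration in $r\in I$ (using the factorial bounds on $\sigma_k$ to absorb the symbol tail) to propagate vanishing of $\int\rho(t)q(t,x_0+r\omega_*)\,dt$ from the entry endpoint of $I$ into the whole interval. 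By duality in $\rho$, this forces $q(\cdot,x_0+r\omega_*)\equiv 0$ on $(\delta,T-\delta)$ for every $r\in I$; combined with the hypothesis $q=0$ on $\mathcal D_\delta$, $q$ vanishes on the full time interval along the whole segment. Varying $x_0$ in a small neighborhood (and hence $\omega_*$) sweeps out a neighborhood $\mathcal V$ of $p$, yielding $q=0$ on $(0,T)\times(\mathcal V\cap\Omega)$. The main obstacle is exactly this last step: because $e^{-2\lambda r}$ genuinely decays in $r$ (there is no oscillating FBI weight as in the elliptic linearized Calder\'on problem, and no Kelvin-type transform is available), a naive stationary-phase limit recovers only $q$ near the endpoint of $I$ closest to $x_0$, and it is the interplay of Phragm\'en--Lindel\"of complex-analytic control with a Gr\"onwall iteration keyed to the precise Gevrey-$1$ symbol bounds that propagates vanishing all the way into the interior.
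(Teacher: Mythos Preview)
Your overall plan---spherical quasimodes with real radial phase $e^{\pm\lambda^2 t-\lambda r}$, a Gevrey-$1$ amplitude hierarchy, reduction to a weighted Laplace transform along a ray, and a Phragm\'en--Lindel\"of/Gr\"onwall endgame---is precisely the strategy the paper follows. But two of your reductions fail as written. First, an angular cutoff $\chi\in C^\infty(S^{n-1})$ does \emph{not} force the boundary trace of $U_j$ into $\Gamma$: the cone $\{x_0+r\omega:\omega\in\supp\chi\}$ enters $\Omega$ near $p$ but exits $\partial\Omega$ on the far side, generically outside $\Gamma$, so the corrected $u_j$ are not admissible in \eqref{density_eq_loc}. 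The paper instead multiplies by a \emph{spatial} cutoff $\chi(x)$ supported in a small ball around $p$; the commutator $[\Delta,\chi]\,\mathcal U_\tau^{(j)}$ is then controlled only because the geometry is arranged so that $r\geq\varepsilon_0+\varepsilon_1$ on $\supp\nabla\chi$, giving extra decay $e^{-\varepsilon_1\tau}$. That geometric gain, together with the Borel truncation $N\sim c\tau$, is what produces the genuinely exponential improvement $e^{-(2\varepsilon_0+2\varepsilon_2)\tau}$ that the Phragm\'en--Lindel\"of step requires; a bound that is merely $O(\lambda^{-N})$ for each fixed $N$ is not what is actually proved or used.

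Second, ``shrinking the angular cutoff $\chi$ to $\omega_*$'' is not legitimate: the transport hierarchy feeds $\Delta_{S^{n-1}}\chi$ into $a_1,a_2,\ldots$, so the Gevrey constant in $\|a_k\|\leq C^{k+1}k!$---and hence every constant in the Gr\"onwall kernel---blows up as $\chi$ concentrates. The paper sidesteps this by taking separated amplitudes $A_\tau(r)\,Y_{\sigma,\omega}(\theta)$ with $Y_{\sigma,\omega}$ solving $(-\Delta_{S^{n-1}_+}+\sigma^2)Y=0$; the purely radial symbol then has Gevrey bounds uniform in $(\sigma,\omega)$, and the angular variable is eliminated only at the very end by a separate completeness argument (Proposition~\ref{prop_appendix}). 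The time variable is handled analogously: not by cutoffs $\psi_j(t)$ in the amplitude (which would make your symbol $\sigma$ depend on $\psi_1,\psi_2$ individually and invalidate ``let $\rho$ vary freely''), but by taking offset frequencies $\tau_1=\tau+\lambda/\tau$, $\tau_2=\tau-\lambda/\tau$ so that $e^{(\tau_1^2-\tau_2^2)t}=e^{4\lambda t}$, followed by analytic continuation in $\lambda$. Finally, the Gr\"onwall step only propagates vanishing across the short interval $r\in(\varepsilon_0,\varepsilon_0+\varepsilon_2)$ bought by the exponential gain, not along the whole chord $I$; this is why the conclusion is local near $p$.
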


\subsection{Preliminaries} For the benefit of the reader we start this section with fixing our notation that will stand for the entire section. We will also state some lemmas for future reference. The notation $B(x,r)$ stands for the closed ball of radius $r>0$ centred at the point $x\in \R^n$. We will generally work in a neighbourhood of the point $p\in \Gamma$ (see  Theorem~\ref{thm_density_loc}) and as such we introduce a suitable local coordinate system near this point. To this end, let $x_0= p+\varepsilon_0\nu(p)$ where $\nu(p)$ is the exterior unit normal vector to $\p \Omega$ at $p$. We write $\Pi_{x_0}$ for the  hyperplane that is normal to $\nu(p)$ and passes through the point $x_0$ (see Figure \eqref{fig1} for more details). Since $\p \Omega $ is smooth, there is $\varepsilon_0\in(0,1)$ small such that: 
\begin{itemize}
	\item [{\bf (C1)}]{The ball $B(x_0,\varepsilon_0)$ only intersects the boundary $\p \Omega$ at the point $p$.}
	\item[{\bf (C2)}]{For each $r\in [\varepsilon_0,2\varepsilon_0]$, $B(x_0,r) \cap \p\Omega$ is a connected nonempty subset of $\Gamma$.}
	\item[{\bf (C3)}]{$\Omega \cap B(x_0,2\varepsilon_0) \cap \Pi_{x_0}=\emptyset$.}
\end{itemize}
 We fix $\varepsilon_0\in (0,1)$ only depending on $\Omega$, $\Gamma$ and $p$ so that the above three conditions are satisfied. Subsequently, given each $r\in[\varepsilon_0,2\varepsilon_0]$ denote by $U_r$ the unique hemisphere on the boundary of $B(x_0,r)$ that both has its great circle on $\Pi_{x_0}$ and that the hemisphere also intersects $\p \Omega$. We now consider the region 
\begin{equation}\label{U_def_0} 
	U = \bigcup_{r=\varepsilon_0}^{2\varepsilon_0} U_r,
\end{equation}
and define coordinates on $U$ via polar coordinates centred at the point $x_0$. In this way we can say that in polar coordinates 
\begin{equation}
	\label{U_def}
U = [\varepsilon_0,2\varepsilon_0]\times \mathbb S^{n-1}_+ \quad g = (dr)^2+ r^2 g_{\mathbb S^{n-1}_+} \end{equation}
where $(\mathbb S^{n-1}_+,g_{\mathbb S^{n-1}_+})$ is the unit $(n-1)$-dimensional hemisphere embedded $\R^n$ with its natural induced metric. For example, in $\R^2$, we have $\mathbb S^1_+= [0,\pi]$ with $g_{\mathbb S^1_+}= (d\theta)^2$. From now on, we will use the coordinates $(r,\theta)=(r,\theta^1,\ldots,\theta^{n-1})$ described above on $U$.  
\begin{figure}[!ht]
  \centering
  \includegraphics[width=0.6\textwidth]{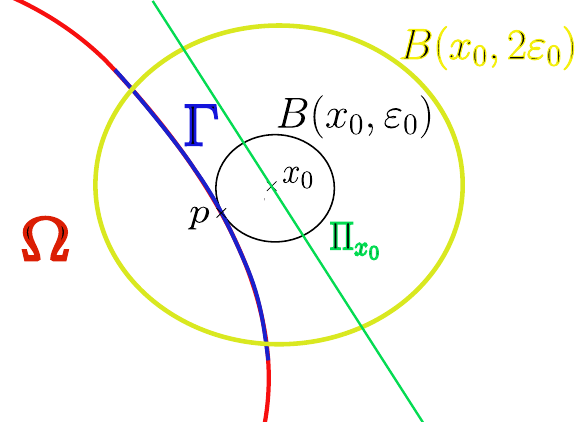}
  \caption{Figure for $p$, $x_0$ $B(x_0,\varepsilon_0)$ and $\Pi_{x_0}$  }
  \label{fig1}
\end{figure}
Observe that in view of (C1) there is a positive $\varepsilon_1\in (0,\frac{\varepsilon_0}{4})$ such that
\begin{equation}
	\label{distant_formula}
	2\varepsilon_0\geq\textrm{dist}(x,x_0)\geq \varepsilon_0+\varepsilon_1 \quad \forall\, x\in \overline\Omega \cap B(p,\frac{\varepsilon_0}{2})\setminus B(p,\frac{\varepsilon_0}{4}).
\end{equation}
Next, we define a family of spherical functions that will be employed later.	
\begin{definition}\label{Y_def}
Let $\sigma \in [0,1]$. For $n=2$, we define
$$ Y_\sigma(\theta)=e^{\sigma \theta} \quad \theta \in [0,\pi].$$
For $n\geq 3$, and any $\omega \in C^{\infty}(\p \mathbb S_+^{n-1})$ that additionally satisfies 
\begin{equation}\label{def_omega}
	\|\omega\|_{H^{\frac{n+3}{2}}(\p \mathbb S^{n-1}_+)} \leq 1,\end{equation}
we define the function $Y_{\sigma,\omega} \in C^{\infty}(\overline{\mathbb S_+^{n-1}})$ to be the unique solution to 
	\begin{equation}\label{spherical_harmonics}
	\begin{aligned}
		\begin{cases}
			-\Delta_{\mathbb S^{n-1}_+} Y_{\sigma,\omega} +\sigma^2 Y_{\sigma,\omega}=0 
			&\text{on  $\mathbb S^{n-1}_+$},
			\\
			Y_{\sigma,\omega}=\omega & \text{on $\p \mathbb S_+^{n-1}$}.
		\end{cases}
	\end{aligned}
\end{equation}
\end{definition}
In what follows, we abuse notation slightly and always write $Y_{\sigma,\omega}$ independent of the dimension $n=2$ or $n>2$. It should however be understood implicitly that in the case $n=2$ the role of the function $\omega$ is redundant. Let us remark also that by Sobolev embedding theorem and elliptic estimates there holds:
\begin{equation}
	\label{Y_est}
	\|Y_{\sigma,\omega}\|_{W^{2,\infty}( \mathbb S_+^{n-1})} \leq C_n, 
\end{equation}
for some universal constant $C_n>0$ that depends only on $n$. We will also need to define a fixed cut-off function near the point $p$ that will be employed in the proof of Theorem~\ref{thm_density_loc}.
\begin{definition}\label{def_chi}
We define $\chi \in C^{\infty}(\R^n;[0,1])$ such that $\chi=1$ inside the ball $B(p,\frac{\varepsilon_0}{4})$ and $\chi=0$ outside the ball $B(p,\frac{\varepsilon_0}{2})$.
\end{definition}
Let us observe that supp$(\chi)\subset U$.
Next, for future reference, let us introduce a family of smooth functions as follows.
\begin{definition}\label{def_amplitudes}
	Let $\sigma \in [0,1]$. We define $\{a_k\}_{k=0}^{\infty} \subset C^{\infty}((0,\infty))$ via
	$$ a_k(r) = c_k\,r^{-\frac{n-1}{2}-k},$$
	where $c_0=1$ and for each $k\in \N$, there holds
	\begin{equation}\label{def_c} 
		c_k = -\frac{1}{2k}\left(k^2-k+\sigma^2 -\frac{(n-1)(n-3)}{4}\right)\,c_{k-1}.\end{equation} 
\end{definition}
We have the following trivial lemma.
\begin{lemma}
	\label{lem_amplitudes}
	The sequence of functions $\{a_k(r)\}_{k=1}^{\infty}\subset C^{\infty}((0,\infty))$ described above satisfy the following recursive ODEs:
	$$ 2\frac{da_k}{dr} +\frac{n-1}{r} a_{k} - r^{-(n-1)} \frac{d}{dr}(r^{n-1}\frac{da_{k-1}}{dr}) - \sigma^2 r^{-2}a_{k-1} =0 \quad \forall\, k\in \N.$$
	Moreover, given any $k\in \N$, there holds:
	\begin{equation}
		\label{c_bounds}
		|c_k| \leq C2^{-k}\,(k+1)!,
	\end{equation}  
	with $C>0$ a constant depending only on $n$.
\end{lemma}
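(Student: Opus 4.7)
The lemma consists of two essentially independent claims: an ODE identity and a factorial growth bound. Both reduce to bookkeeping with the explicit form $a_k(r) = c_k r^{-(n-1)/2 - k}$.

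For the ODE identity, my plan is to just substitute and collect powers of $r$. Differentiating gives $\frac{da_k}{dr} = -c_k\bigl(\tfrac{n-1}{2}+k\bigr) r^{-(n-1)/2 - k -1}$, so the first two terms combine as
\[
2\frac{da_k}{dr} + \frac{n-1}{r} a_k = -2k\, c_k\, r^{-(n-1)/2 - k - 1}.
\]
For the remaining two terms, one computes $r^{n-1}\frac{da_{k-1}}{dr} = -c_{k-1}\frac{n+2k-3}{2} r^{(n-1)/2 - k}$, differentiates again, and obtains after multiplication by $r^{-(n-1)}$:
\[
r^{-(n-1)}\frac{d}{dr}\!\Bigl(r^{n-1}\frac{da_{k-1}}{dr}\Bigr) = c_{k-1}\,\frac{(n+2k-3)(2k - n + 1)}{4}\, r^{-(n-1)/2 - k - 1}.
\]
Adding $-\sigma^2 r^{-2} a_{k-1}$, everything carries the same power $r^{-(n-1)/2 - k -1}$, and the coefficient equation is $-2k c_k = \bigl(\tfrac{(n+2k-3)(2k-n+1)}{4} + \sigma^2\bigr) c_{k-1}$. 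After simplifying $(n+2k-3)(2k-n+1) = 4k(k-1) - (n-1)(n-3)$, this is exactly the recursion \eqref{def_c}.

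For the factorial bound, I would simply iterate the recursion. Let $A_n := |\sigma^2 - \tfrac{(n-1)(n-3)}{4}|$, which depends only on $n$ since $\sigma\in[0,1]$. Then
\[
|c_k| \leq \frac{k^2 - k + A_n}{2k}\,|c_{k-1}| \leq \frac{j^2 + A_n}{2k}\,|c_{k-1}|,
\]
and unwinding with $c_0 = 1$:
\[
|c_k| \leq \prod_{j=1}^k \frac{j^2 + A_n}{2j} = \frac{k!}{2^k}\prod_{j=1}^k \Bigl(1 + \frac{A_n}{j^2}\Bigr).
\]
The key observation is that $\sum_{j=1}^\infty A_n/j^2 < \infty$, so $\prod_{j=1}^\infty (1 + A_n/j^2)$ converges to some constant $C$ depending only on $n$. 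Therefore $|c_k| \leq C\, k!/2^k \leq C\, 2^{-k}(k+1)!$, which is the claimed bound.

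There is no real obstacle here — the first part is a direct algebraic identity and the second is a standard factorial estimate via a convergent infinite product. The only point to be careful about is keeping the sign conventions straight when going from the absolute value of the recursion coefficient back to the asserted form of $c_k$ in \eqref{def_c}.
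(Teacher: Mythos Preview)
Your argument is correct and is exactly the direct computation the paper has in mind; the paper in fact omits the proof entirely, calling the lemma ``trivial.'' Two cosmetic points: the displayed inequality $\leq \frac{j^2+A_n}{2k}|c_{k-1}|$ has a typo ($j$ should be $k$), and your $A_n := |\sigma^2 - \tfrac{(n-1)(n-3)}{4}|$ does depend on $\sigma$ --- what you need (and what makes the rest go through verbatim) is to replace it by the $\sigma$-independent upper bound $1 + \tfrac{|(n-1)(n-3)|}{4}$.
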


\subsection{Spherical quasimodes for heat equation}

We start by introducing a parameter 
\begin{equation}\label{tau_range}\tau>\min\{n,\frac{64e}{\varepsilon_0}\} \quad \text{and} \quad N=\lfloor \frac{\varepsilon_0\tau}{32e}\rfloor.\end{equation}
Our aim in this section is to introduce a family of solutions to heat equations that depend on the asymptotic parameter $\tau$ that will eventually approach infinity. These will be the family of solutions that we will use in proving Theorem~\ref{thm_density_loc}. More precisely, we begin by fixing
$$ \sigma_1,\sigma_2 \in [0,1] \quad \text{and $\omega_1,\omega_2 \in C^{\infty}(\p \mathbb S_+^{n-1})$ satisfying \eqref{def_omega}},$$ 
and subsequently consider a smooth solution $u_{\tau}^{(1)}\in C^{\infty}([\delta,T]\times \overline{\Omega})$ to the equation
\begin{equation}\label{eq_free_delta}
			\p_t u_{\tau}^{(1)} - \Delta u_{\tau}^{(1)}=0 
			\quad \text{on  $(\delta,T)\times \Omega$}, \quad \supp(u_{\tau}^{(1)}|_{[\delta,T]\times\p\Omega})\subset [\delta,T]\times \Gamma,
\end{equation}
of the form 
\begin{equation}\label{u_1_tau} u_{\tau}^{(1)}(t,x) = e^{\tau^2t}(\mathcal U_{\tau}^{(1)}(x) \chi(x)+ R_{\tau}^{(1)}(t,x)), \end{equation}
as well as a smooth solution $u_{\tau}^{(2)}\in C^{\infty}([0,T-\delta]\times \overline{\Omega})$ to the equation
\begin{equation}\label{eq_free_adjoint_delta}
		-\p_t u_{\tau}^{(2)} - \Delta u_{\tau}^{(2)}=0 
		\quad \text{on  $(0,T-\delta)\times \Omega$}\quad \supp(u_{\tau}^{(2)}|_{[0,T-\delta]\times\p\Omega})\subset [0,T-\delta]\times \Gamma,
	\end{equation}
of the form
\begin{equation}\label{u_2_tau} u_{\tau}^{(2)}(t,x) = e^{-\tau^2t}(\mathcal U_{\tau}^{(2)}(x) \chi(x)+ R_{\tau}^{(2)}(t,x)).\end{equation}
Here, $\chi$ is as in Definition~\ref{def_chi} and for each $j=1,2,$ the function $\mathcal U_{\tau}^{(j)} (x)$ is defined in the region $U$ (see \eqref{U_def}) and is given in polar coordinates $(r,\theta)$ centred at the point $x_0$ via the expression 
\begin{equation}\label{principal_amp} 
	\mathcal U_{\tau}^{(j)}(r,\theta) =e^{-\tau r} (\underbrace{\sum\limits_{k=0}^{N}a^{(j)}_k(r)\tau^{-k}}_{A^{(j)}_\tau(r)})Y_{\sigma_j,\omega_j}(\theta), \quad r\in [\varepsilon_0,2\varepsilon_0] \quad \theta \in \mathbb S_+^{n-1},\end{equation}
where the sequence of functions $$a_k^{(j)}=c^{(j)}_k\,r^{-\frac{n-1}{2}-k} \quad k=0,1\ldots,N,$$ 
are given as in Definition~\ref{def_amplitudes} and $c_k^{(j)}$ are defined by \eqref{def_c} with $\sigma=\sigma_j$ and finally we recall that $Y_{\sigma_j,\omega_j}$ is defined as in \eqref{spherical_harmonics} with $\sigma=\sigma_j \in [0,1]$ and $\omega=\omega_j\in C^{\infty}(\p \mathbb S_+^{n-1})$ as in \eqref{def_omega}. We recall that in the case $n=2$, the role of $\omega_j$ is redundant. In general, we remark to the reader that the superscript $(j)$ with $j=1,2,$ always depicts the dependence of solutions on the pair $(\sigma_j,\omega_j)$.  We have the following estimate for the amplitude term $A_\tau^{(j)}$.
\begin{lemma}
	\label{lem_amp_est}
	For $j=1,2$ and any $\tau>0$ satisfying \eqref{tau_range} there holds:
	$$ \| A^{(j)}_\tau(r)-a^{(j)}_0(r)\|_{W^{2,\infty}((\varepsilon_0,2\varepsilon_0))} \leq C_{\varepsilon_0}\,\tau^{-1},$$
for some constant $C_{\varepsilon_0}>0$ that is independent of $\tau$.
\end{lemma}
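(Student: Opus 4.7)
The plan is to expand the difference as the explicit finite sum
\[
A_\tau^{(j)}(r) - a_0^{(j)}(r) = \sum_{k=1}^{N} c_k^{(j)}\,r^{-(n-1)/2-k}\,\tau^{-k},
\]
and reduce the $W^{2,\infty}$ estimate to a routine bound on a geometric series, exploiting the factorial bound \eqref{c_bounds} together with the calibrated cutoff $N = \lfloor \varepsilon_0\tau/(32e)\rfloor$. If $N=0$ the claim is trivial, so I assume $\tau$ is large enough that $N\geq 1$.

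First I would establish a termwise bound. Since $r\in[\varepsilon_0,2\varepsilon_0]$ is bounded away from zero, differentiating $r^{-(n-1)/2-k}$ up to twice produces at most a polynomial prefactor $C_n(k+1)^2$ together with the worst-case value $\varepsilon_0^{-(n-1)/2-k-2}$; combining this with $|c_k^{(j)}|\leq C\,2^{-k}(k+1)!$ from \eqref{c_bounds} yields, uniformly in $r$,
\[
\Bigl\|\tau^{-k}c_k^{(j)}r^{-(n-1)/2-k}\Bigr\|_{W^{2,\infty}((\varepsilon_0,2\varepsilon_0))} \leq C_{n,\varepsilon_0}\,(k+1)^{2}(k+1)!\,(2\varepsilon_0\tau)^{-k}.
\]

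Next I would exploit the truncation. Setting $T_k=(k+1)^{2}(k+1)!(2\varepsilon_0\tau)^{-k}$, a direct computation gives
\[
\frac{T_{k+1}}{T_k} = \Bigl(\tfrac{k+2}{k+1}\Bigr)^{\!2}\frac{k+2}{2\varepsilon_0\tau} \leq \frac{2(k+2)}{\varepsilon_0\tau}.
\]
Since the summation range forces $k+2\leq N+1\leq \varepsilon_0\tau/(32e)+1$, and \eqref{tau_range} guarantees $\tau\geq 64e/\varepsilon_0$, this ratio is bounded by a fixed constant strictly less than $1/2$ throughout the summation range. A geometric summation then gives $\sum_{k=1}^{N}T_k \leq 2T_1 = C(\varepsilon_0\tau)^{-1}$, which combined with the termwise bound above yields the desired estimate.

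The argument is essentially computational; the only conceptual point is recognizing that the threshold $N$ was calibrated precisely so that the factorial growth $(k+1)!$ is dominated by the small parameter $(\varepsilon_0\tau)^{-k}$ over the entire (finite) range of summation. This Borel-resummation-type truncation is exactly the content of \eqref{tau_range}, and once it is in place the estimate is straightforward.
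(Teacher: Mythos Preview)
Your proof is correct and follows essentially the same approach as the paper: expand the difference as a finite sum, bound each term using the factorial estimate \eqref{c_bounds} on $c_k^{(j)}$ together with elementary bounds on derivatives of $r^{-(n-1)/2-k}$, and then use the truncation $N=\lfloor \varepsilon_0\tau/(32e)\rfloor$ to dominate the sum by a convergent geometric series. The paper presents the termwise bound in the slightly different form $\|a_k^{(j)}\|_{W^{2,\infty}}\leq D_{\varepsilon_0}((k+1)/\varepsilon_0)^k$ and bounds the geometric ratio directly rather than via a ratio test, but the argument is the same in substance.
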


\begin{proof}
	First, note that 
	$$ \|a_k^{(j)}(r)\|_{W^{2,\infty}((\varepsilon_0,2\varepsilon_0))} \leq C(k+1)!\,2^{-k}\, \|r^{-\frac{n-1}{2}-k}\|_{W^{2,\infty}((\varepsilon_0,2\varepsilon_0))}\leq D_{\varepsilon_0}\, \left(\frac{k+1}{\varepsilon_0}\right)^k,$$
	for some constant $D_{\varepsilon_0}>0$ that depends only on $\varepsilon_0$ and $n$. Therefore,  we write
	$$\begin{aligned} \|A_\tau^{(j)}(r)-a_0^{(j)}(r)\|_{W^{2,\infty}((\varepsilon_0,2\varepsilon_0))}&\leq \tau^{-1}\,\sum_{k=1}^{N} \|a_k^{(j)}(r)\|_{W^{2,\infty}((\varepsilon_0,2\varepsilon_0))} \tau^{-k+1}\\
		&\leq \tau^{-1}\,\sum_{k=1}^{N} D_{\varepsilon_0}\,\frac{k+1}{\varepsilon_0}\, \left(\frac{k+1}{\tau\,\varepsilon_0}\right)^{k-1} .
	\end{aligned}$$
	Recalling that $N$ is defined by \eqref{tau_range}, for all $k=1,\ldots,N$, we obtain
	$$\left(\frac{k+1}{\tau\,\varepsilon_0}\right)^{k-1}\leq \left(\frac{N+1}{\tau\,\varepsilon_0}\right)^{k-1}\leq \left(\frac{\frac{\tau\varepsilon_0}{32e}+1}{\tau\,\varepsilon_0}\right)^{k-1}\leq \left(\frac{1}{2}\right)^{k-1}$$
	and it follows that
	$$\|A_\tau^{(j)}(r)-a_0^{(j)}(r)\|_{W^{2,\infty}((\varepsilon_0,2\varepsilon_0))}\leq \tau^{-1}\,\underbrace{\frac{D_{\varepsilon_0}}{\varepsilon_0} \sum_{k=1}^{\infty}(k+1)\left(\frac{1}{2}\right)^{k-1}}_{C_{\varepsilon_0}}.$$	
\end{proof}
With the ansatz \eqref{u_1_tau}--\eqref{u_2_tau}, we first aim to demonstrate that the principal part approximately satisfies the heat equations \eqref{eq_free_delta}--\eqref{eq_free_adjoint_delta} respectively. More precisely, let us start by writing 
\begin{multline} \label{conjug_1}
	(\p_t -\Delta) (e^{\tau^2t-\tau r}A^{(1)}_\tau(r)Y_{\sigma_1,\omega_1}(\theta))=\\
	 e^{\tau^2t-\tau r}\left(2\tau \p_r A^{(1)}_\tau +\tau \frac{n-1}{r}A^{(1)}_\tau - r^{-(n-1)}\p_r(r^{n-1}\p_r A^{(1)}_\tau)-\frac{\sigma_1^2}{r^2}A_\tau^{(1)}\right) Y_{\sigma_1,\omega_1}(\theta),
\end{multline}
and 
\begin{multline} \label{conjug_1_alt}
	(-\p_t -\Delta) (e^{-\tau^2t-\tau r}A^{(2)}_\tau(r)Y_{\sigma_2,\omega_2}(\theta))=\\
	e^{-\tau^2t-\tau r}\left(2\tau \p_r A^{(2)}_\tau +\tau \frac{n-1}{r}A^{(2)}_\tau - r^{-(n-1)}\p_r(r^{n-1}\p_r A^{(2)}_\tau)-\frac{\sigma_2^2}{r^2}A_\tau^{(2)}\right) Y_{\sigma_2,\omega_2}(\theta),
\end{multline}
 In view of Lemma~\ref{lem_amplitudes} there holds
 \begin{multline} 
 	\label{conj_2}
 	(\p_t -\Delta) (e^{\tau^2t-\tau r}A^{(1)}_\tau(r)Y_{\sigma_1,\omega_1}(\theta))=\\
 	-c_N^{(1)}\,\tau^{-N}\,\left((N-\frac{n-3}{2})(N+\frac{n-1}{2})+\sigma_1^2 \right)\,e^{\tau^2t-\tau r} r^{-\frac{n-1}{2}-N-2} \,Y_{\sigma_1,\omega_1}(\theta)
 \end{multline}
and 
 \begin{multline} 
	\label{conj_3}
	(-\p_t -\Delta) (e^{-\tau^2t-\tau r}A^{(2)}_\tau(r)Y_{\sigma_2,\omega_2}(\theta))=\\
	-c_N^{(2)}\,\tau^{-N}\,\left((N-\frac{n-3}{2})(N+\frac{n-1}{2})+\sigma_2^2 \right)\,e^{-\tau^2t-\tau r} r^{-\frac{n-1}{2}-N-2} \,Y_{\sigma_2,\omega_2}(\theta),
\end{multline}
where $c_N^{(j)}$, $j=1,2,$ are as in Definition~\ref{def_amplitudes} with $\sigma=\sigma_j$. 

We construct the remainder terms $R_{\tau}^{(1)}$ and $R_{\tau}^{(2)}$ by solving the following initial boundary value problems:
\begin{equation}\label{remainder_1}
	\begin{aligned}
		\begin{cases}
			(\p_t-\Delta) (e^{\tau^2 t}R_{\tau}^{(1)})=- (\p_t -\Delta)(e^{\tau^2t}\mathcal U_{\tau}^{(1)}(x)\chi(x)) 
			&\text{on  $(\delta,T)\times \Omega$},
			\\
			R_{\tau}^{(1)}=0 & \text{on $(\delta,T)\times \p \Omega$},
			\\
			R_{\tau}^{(1)}|_{t=\delta}=0 & \text{on $\Omega$}.
		\end{cases}
	\end{aligned}
\end{equation}
and
\begin{equation}\label{remainder_2}
	\begin{aligned}
		\begin{cases}
			(-\p_t-\Delta) (e^{-\tau^2 t}R_{\tau}^{(2)})=(\p_t+\Delta)(e^{-\tau^2t}\mathcal U^{(2)}_{\tau}(x)\chi(x)) 
			&\text{on  $(0,T-\delta)\times \Omega$},
			\\
			R_{\tau}^{(2)}=0 & \text{on $(0,T-\delta)\times \p \Omega$},
			\\
			R_{\tau}^{(2)}|_{t=T-\delta}=0 & \text{on $\Omega$}.
		\end{cases}
	\end{aligned}
\end{equation}
Taking \eqref{conj_2} into account, we can rewrite equations \eqref{remainder_1}--\eqref{remainder_2} as follows:
\begin{equation}\label{remainder_12}
	\begin{aligned}
		\begin{cases}
			e^{-\tau^2 t}(\p_t-\Delta) (e^{\tau^2 t}R_{\tau}^{(1)})=F_{\tau}^{(1)}(x)+G_{\tau}^{(1)}(x),
			&\text{on  $(\delta,T)\times \Omega$},
			\\
			R_{\tau}^{(1)}=0 & \text{on $(\delta,T)\times \p \Omega$},
			\\
			R_{\tau}^{(1)}|_{t=\delta}=0 & \text{on $\Omega$},
		\end{cases}
	\end{aligned}
\end{equation}
and
\begin{equation}\label{remainder_22}
	\begin{aligned}
		\begin{cases}
			e^{\tau^2 t}(-\p_t-\Delta) (e^{-\tau^2 t}R_{\tau}^{(2)})=F_{\tau}^{(2)}(x)+G_{\tau}^{(2)}(x),
			&\text{on  $(0,T-\delta)\times \Omega$},
			\\
			R_{\tau}^{(2)}=0 & \text{on $(0,T-\delta)\times \p \Omega$},
			\\
			R_{\tau}^{(2)}|_{t=T-\delta}=0 & \text{on $\Omega$},
		\end{cases}
	\end{aligned}
\end{equation}
where for $j=1,2,$ we have
\begin{equation}
	\label{F_1}
	F_{\tau}^{(j)}=c_N^{(j)}\,\tau^{-N}\,\left((N-\frac{n-3}{2})(N+\frac{n-1}{2})+\sigma_j^2 \right)\,e^{-\tau r} r^{-\frac{n-1}{2}-N-2} \,Y_{\sigma_j,\omega_j}\,\chi
\end{equation}
and
\begin{equation}
	\label{F_2}
	G_{\tau}^{(j)}=[\Delta,\chi] \,\mathcal U^{(j)}_{\tau}
\end{equation}
We have the following lemma. We recall that $\varepsilon_1$ is as defined in \eqref{distant_formula}.
\begin{lemma}
	\label{lem_F_est}
There exists $\varepsilon_2\in (0,\frac{\varepsilon_1}{32e})$ such that given any $\tau$ and $N$ satisfying \eqref{tau_range} there holds:
\begin{equation}
	\label{F_est}
	\|F_{\tau}^{(j)}\|_{L^2(\Omega)}+	\|G_{\tau}^{(j)}\|_{L^2(\Omega)} \leq  C_{\varepsilon_0}\, e^{-(\varepsilon_0+2\varepsilon_2)\,\tau}, \quad j=1,2,
\end{equation}
where $C_{\varepsilon_0}>0$ is independent of $\tau$.
\end{lemma}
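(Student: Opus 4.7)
The plan is to split the estimate into the two summands and handle them separately. The term $G_\tau^{(j)} = [\Delta,\chi]\mathcal U_\tau^{(j)}$ benefits from the geometric feature that derivatives of $\chi$ are supported in the annulus $B(p,\varepsilon_0/2)\setminus B(p,\varepsilon_0/4)$; on its intersection with $\overline\Omega$ the distance estimate \eqref{distant_formula} forces $r \geq \varepsilon_0+\varepsilon_1$, so the factor $e^{-\tau r}$ built into $\mathcal U_\tau^{(j)}$ already supplies ample exponential decay. The term $F_\tau^{(j)}$, by contrast, is supported throughout $\supp(\chi)\cap\overline\Omega$ where one only has $r\geq\varepsilon_0$, so here the exponential decay must be extracted from a careful balance between the factorial growth of $c_N^{(j)}$ and the negative power $\tau^{-N}$.

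For $G_\tau^{(j)}$, I would expand $[\Delta,\chi] = (\Delta\chi) + 2(\nabla\chi)\cdot\nabla$, note that differentiating $\mathcal U_\tau^{(j)} = e^{-\tau r}A_\tau^{(j)}(r)Y_{\sigma_j,\omega_j}(\theta)$ produces at most one factor of $\tau$, and use Lemma~\ref{lem_amp_est} together with \eqref{Y_est} to control the remaining quantities uniformly in $\tau$. This yields a pointwise estimate $|G_\tau^{(j)}(x)| \leq C(1+\tau)\,e^{-(\varepsilon_0+\varepsilon_1)\tau}$ on the support, and the polynomial factor is absorbed into a slightly smaller exponent when $\tau$ is large; any $\varepsilon_2$ with $2\varepsilon_2 < \varepsilon_1$ then suffices for this piece.

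For $F_\tau^{(j)}$, I would combine the factorial bound $|c_N^{(j)}|\leq C\,2^{-N}(N+1)!$ from Lemma~\ref{lem_amplitudes} with Stirling's estimate $(N+1)!\leq C(N+1)^{N+3/2}e^{-(N+1)}$ and the pointwise lower bound $r\geq \varepsilon_0$ on $\supp(\chi)\cap\overline\Omega$ to obtain
$$|F_\tau^{(j)}(x)| \leq C(\varepsilon_0,n)\,(N^2+1)\sqrt{N+1}\,\left(\frac{N+1}{2e\tau\varepsilon_0}\right)^{N} e^{-\tau r}.$$
Everything then rests on the dimensionless ratio $(N+1)/(2e\tau\varepsilon_0)$: the calibration $N=\lfloor \varepsilon_0\tau/(32e)\rfloor$ together with $\tau > 64e/\varepsilon_0$ forces this ratio to be at most $1/(32e^2)<1/2$ \emph{uniformly} in $\tau$. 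Consequently the $N$-th power decays like $e^{-\beta\varepsilon_0\tau}$ with $\beta = \ln(32e^2)/(32e)>0$, which combined with $e^{-\tau r}\leq e^{-\varepsilon_0\tau}$ yields $\|F_\tau^{(j)}\|_{L^\infty(\Omega)}\leq C_{\varepsilon_0}\,e^{-(1+\beta-\eta)\varepsilon_0\tau}$ for any small $\eta>0$ and $\tau$ sufficiently large, the remaining polynomial prefactors being absorbed into $\eta$. Since $\Omega$ has finite measure, the $L^2$ bound is immediate.

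To conclude I would choose $\varepsilon_2 \in (0,\varepsilon_1/(32e))$ small enough that $2\varepsilon_2 \leq \min\{\varepsilon_1/2,\beta\varepsilon_0/2\}$; both $F_\tau^{(j)}$ and $G_\tau^{(j)}$ then satisfy the stated estimate. The main obstacle is the $F_\tau^{(j)}$ bookkeeping: one must verify that the gap between $(N+1)/(2e\tau\varepsilon_0)$ and $1$ is uniform in $\tau$, since only a uniform gap turns the $N$-th power into honest exponential decay rather than a term that merely cancels the factorial growth. The constant $32e$ in the definition of $N$ in \eqref{tau_range} is precisely what manufactures this uniform gap.
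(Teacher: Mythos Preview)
Your proposal is correct and follows essentially the same route as the paper: split into $F_\tau^{(j)}$ and $G_\tau^{(j)}$, use the support condition and \eqref{distant_formula} to get $r\geq \varepsilon_0+\varepsilon_1$ for the commutator term, and for $F_\tau^{(j)}$ balance the factorial bound on $c_N^{(j)}$ against $\tau^{-N}r^{-N}$ via the calibration $N\approx \varepsilon_0\tau/(32e)$ to squeeze out extra exponential decay beyond $e^{-\varepsilon_0\tau}$. The only differences are cosmetic: the paper uses the cruder inequality $N!\leq N^N$ in place of your Stirling estimate, and bounds directly in $L^2$ rather than passing through $L^\infty$; also note that your polynomial prefactor should carry one more power of $N+1$ (Stirling leaves $(N+1)^{3/2}$ after extracting $((N+1)/e)^N$, and there is an additional $(N+1)$ from $(N+1)!=(N+1)\cdot N!$ already absorbed in the paper's $(N+1)^3$), but this is harmless since polynomial factors are absorbed into the exponent exactly as you say.
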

\begin{proof}
	We start with $F_{\tau}^{(j)}$ which is supported in the region $U$. Together with \eqref{Y_est} and \eqref{c_bounds}, we have
	$$ \|F_{\tau}^{(j)}\|_{L^2(\Omega)} \leq C\,(N+1)!\,(2\tau)^{-N}\,N^2\, \| e^{-\tau r}r^{-\frac{n-1}{2}-N-2}\|_{L^2((\varepsilon_0,2\varepsilon_0))},$$
for some constant $C>0$ that only depends on $n$. Recalling that $\varepsilon_0<1$, $N!\leq N^N$ and that $N$ is given by \eqref{tau_range}, it follows that
$$\begin{aligned} \|F_{\tau}^{(j)}\|_{L^2(\Omega)}&\leq C\,(N+1)!\,(2\tau)^{-N}\,N^2\, \varepsilon_0^{\frac{1}{2}}\| e^{-\tau r}r^{-\frac{n-1}{2}-N-2}\|_{L^\infty((\varepsilon_0,2\varepsilon_0))} \\
	&\leq C\,\varepsilon_0^{-\frac{n+2}{2}}\,\left(\frac{N}{2\tau\varepsilon_0}\right)^{N}\,(N+1)^3\, e^{-\tau \varepsilon_0}\\
	&\leq C\,\varepsilon_0^{-\frac{n+2}{2}}\,e^{-(N+1)}\,(N+1)^3\,2^{-N}\,e^{-\tau \varepsilon_0}\\
	&\leq  C\,\varepsilon_0^{-\frac{n+2}{2}}\,e^{-(\varepsilon_0+\frac{\varepsilon_0}{32e})\tau} ,\end{aligned}$$
where we recall that $N+1\geq\frac{\varepsilon_0 \tau}{32e}$ and we denote by $C>0$ a generic constant that may change from line to line depending on $n$. For $G_{\tau}^{(j)}$ we recall that the commutator is supported only in the region $B(p,\frac{\varepsilon_0}{2})\setminus B(p,\frac{\varepsilon_0}{4})$ and that in view of \eqref{distant_formula} we have $r\geq \varepsilon_0+\varepsilon_1$ in this region. Therefore,
$$ \|G_{\tau}^{(j)}\|_{L^2(\Omega)} \leq C_{\varepsilon_0}\, \|e^{-\tau r}A_\tau^{(j)}(r) \|_{H^{1}((\varepsilon_0+\varepsilon_1,2\varepsilon_0))},$$
for some constant $C_{\varepsilon_0}>0$ that only depends on $n$ and $\varepsilon_0$. Together with Lemma~\ref{lem_amp_est} we deduce that
$$  \|G_{\tau}^{(j)}\|_{L^2(\Omega)}  \leq C_{\varepsilon_0}\, \tau\, e^{-\tau( \varepsilon_0+\varepsilon_1)},$$
for some constant $C_{\varepsilon_0}>0$ that only depends on $n$ and $\varepsilon_0$. The claim now follows from the above estimates for $F_{\tau}^{(j)}$ and $G_{\tau}^{(j)}$, $j=1,2$, by choosing $\varepsilon_2 \in (0,\frac{\varepsilon_1}{32e})$ sufficiently small, independent of $\tau$.
\end{proof}
We are ready to give estimates on the remainder terms.
\begin{lemma}
	\label{lem_remainder_est}
	Given any $\tau, N$ satisfying \eqref{tau_range}, there holds:
	$$ \|R_{\tau}^{(1)}\|_{L^2((\delta,T)\times\Omega)}\leq \sqrt{T}\,C_{\varepsilon_0}\,e^{-(\varepsilon_0+2\varepsilon_2)\tau} \quad \text{and}\quad \|R_{\tau}^{(2)}\|_{L^2((0,T-\delta)\times\Omega)}\leq \sqrt{T}\,C_{\varepsilon_0}\,e^{-(\varepsilon_0+2\varepsilon_2)\tau},$$
	where $\varepsilon_2\in (0,\frac{\varepsilon_1}{32e})$ is as given by Lemma~\ref{lem_F_est} and $C_{\varepsilon_0}>0$ is independent of $\tau$.
\end{lemma}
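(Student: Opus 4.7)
The plan is to recognize that \eqref{remainder_12}--\eqref{remainder_22}, while written in conjugated form, amount to heat equations with a favorable $\tau^2$ damping term. Indeed, expanding the conjugation, for $R_\tau^{(1)}$ one has
\[
(\p_t - \Delta + \tau^2)R_{\tau}^{(1)} = F_{\tau}^{(1)} + G_{\tau}^{(1)} \quad \text{on } (\delta,T)\times\Omega,
\]
with $R_\tau^{(1)}=0$ on the parabolic boundary $(\{\delta\}\times\Omega)\cup((\delta,T)\times\p\Omega)$. The analogous reduction for $R_\tau^{(2)}$, after a time reversal $s=T-\delta-t$, yields the same type of equation with terminal data replaced by zero initial data.

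Next, I would run a standard energy estimate. Testing against $R_\tau^{(1)}$ and integrating by parts over $\Omega$ gives
\[
\tfrac{1}{2}\tfrac{d}{dt}\|R_\tau^{(1)}\|_{L^2(\Omega)}^2 + \|\nabla R_\tau^{(1)}\|_{L^2(\Omega)}^2 + \tau^2 \|R_\tau^{(1)}\|_{L^2(\Omega)}^2 = \int_{\Omega}(F_\tau^{(1)}+G_\tau^{(1)})R_\tau^{(1)}\,dx,
\]
and after bounding the right-hand side by $\frac{1}{2\tau^2}\|F_\tau^{(1)}+G_\tau^{(1)}\|_{L^2(\Omega)}^2 + \frac{\tau^2}{2}\|R_\tau^{(1)}\|_{L^2(\Omega)}^2$ via Cauchy--Schwarz, the $\tau^2$ damping absorbs half of it. Dropping the gradient term and applying Gr\"onwall's inequality with $R_\tau^{(1)}|_{t=\delta}=0$ leads to
\[
\|R_\tau^{(1)}(t,\cdot)\|_{L^2(\Omega)}^2 \leq \tfrac{1}{\tau^2}\|F_\tau^{(1)}+G_\tau^{(1)}\|_{L^2(\Omega)}^2 \int_\delta^t e^{-\tau^2(t-s)}\,ds \leq \tfrac{1}{\tau^4}\|F_\tau^{(1)}+G_\tau^{(1)}\|_{L^2(\Omega)}^2,
\]
uniformly in $t\in(\delta,T)$. (Here the source is time-independent, which simplifies matters.)

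Integrating in $t$ over $(\delta,T)$ and taking square roots gives
\[
\|R_\tau^{(1)}\|_{L^2((\delta,T)\times\Omega)} \leq \tfrac{\sqrt{T}}{\tau^2}\|F_\tau^{(1)}+G_\tau^{(1)}\|_{L^2(\Omega)},
\]
and inserting the estimate \eqref{F_est} from Lemma~\ref{lem_F_est} yields the claimed bound (in fact with an additional $\tau^{-2}$ decay, which I would simply discard since the statement only requires the exponential factor). For $R_\tau^{(2)}$, after the time reversal, the identical argument applies verbatim to $\tilde R_\tau^{(2)}(s,x)=R_\tau^{(2)}(T-\delta-s,x)$, giving the analogous bound on $(0,T-\delta)$.

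I do not anticipate a real obstacle: the conjugation $e^{\pm\tau^2 t}$ is set up precisely so that the damping term $\tau^2 R_\tau^{(j)}$ appears with the correct sign, which both absorbs the Cauchy--Schwarz splitting and renders the resulting Gr\"onwall factor $\int e^{-\tau^2(t-s)}\,ds$ harmless. The exponential smallness in $\tau$ is then inherited entirely from Lemma~\ref{lem_F_est}. One minor care point is the regularity/admissibility of $F_\tau^{(j)}$ and $G_\tau^{(j)}$ as right-hand sides so that a smooth solution exists and the energy identity is justified; this follows from the smoothness and compact support of the amplitudes $\mathcal{U}_\tau^{(j)}\chi$.
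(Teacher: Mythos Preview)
Your proof is correct and follows essentially the same approach as the paper: both expand the conjugation to obtain $(\p_t-\Delta+\tau^2)R_\tau^{(1)}=F_\tau^{(1)}+G_\tau^{(1)}$, test against $R_\tau^{(1)}$, and exploit the favorable $\tau^2$ damping via Cauchy--Schwarz together with Lemma~\ref{lem_F_est}. The only cosmetic difference is that the paper integrates directly over $(\delta,T)\times\Omega$ in one step, whereas you first derive a pointwise-in-time bound via Gr\"onwall and then integrate in $t$; your route incidentally yields an extra $\tau^{-2}$ factor, which, as you note, can be discarded.
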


\begin{proof}
	We write the proof for $R_{\tau}^{(1)}$ only as the proof for $R_{\tau}^{(2)}$ follows analogously. We start by noting that $R_{\tau}^{(1)}$ satisfies:
	$$ (\p_t-\Delta) R_{\tau}^{(1)}(t,x) + \tau^2\, R_{\tau}^{(1)}(t,x) = H_\tau(x):= F_{\tau}^{(1)}(x)+G_{\tau}^{(1)}(x) \quad \text{on $(\delta,T)\times \Omega$}.$$
	We multiply the above equation with the complex conjugate $\overline{R_{\tau}^{(1)}}$, integrate by parts over $(\delta,T)\times \Omega$ and take the real part of the expression as follows
	\begin{multline*}
		\textrm{Re} \int_{\delta}^T\int_\Omega H_\tau \overline{R_{\tau}^{(1)}} \,dx\,dt = 	\textrm{Re} \int_{\delta}^T\int_\Omega \overline{R_{\tau}^{(1)}} (\p_t R_{\tau}^{(1)}-\Delta R_{\tau}^{(1)} + \tau^2\, R_{\tau}^{(1)}) \,dx\,dt  \\
		\geq \int_{\delta}^T\int_\Omega (|\nabla R_{\tau}^{(1)}|^2 + \tau^2 |R_{\tau}^{(1)}|^2)\,dx\,dt\geq \tau^2  \int_{\delta}^T\int_\Omega |R_{\tau}^{(1)}|^2\,dx\,dt
	\end{multline*}
 Applying Cauchy--Schwarz inequality, it follows that
 $$ 	\int_{\delta}^T\int_\Omega (|H_\tau|^2 +|R_{\tau}^{(1)}|^2) \,dx\,dt \geq 2\tau^2  \int_{\delta}^T\int_\Omega |R_{\tau}^{(1)}|^2\,dx\,dt \geq 2 \int_{\delta}^T\int_\Omega |R_{\tau}^{(1)}|^2\,dx\,dt.$$
 The claim follows immediately thanks to \eqref{F_est} for $j=1$.
\end{proof}

\subsection{Proof of Theorem~\ref{thm_density_loc}} We have completed the quantitative description of the spherical quasimodes. Our goal now is to prove Theorem~\ref{thm_density_loc} by applying the spherical quasimode solutions as test functions into \eqref{density_eq_loc} and perform an asymptotic analysis as $\tau \to \infty$. To this end, let us start by choosing $$\lambda \in [0,1] \quad \text{and} \quad \tau>1+\min\{n,\frac{64e}{\varepsilon_0}\}.$$ 
Subsequently, we define the parameters
\begin{equation}\label{tau_1} \tau_1 = \tau + \frac{\lambda}{\tau} \quad \text{and} \quad N_1=N= \lfloor \frac{\varepsilon_0 \tau}{32e}\rfloor,\end{equation}
and 
\begin{equation}\label{tau_2} \tau_2 = \tau - \frac{\lambda}{\tau} \quad \text{and} \quad N_2=N= \lfloor \frac{\varepsilon_0 \tau}{32e}\rfloor.\end{equation}
In what follows, we fix $\lambda$ and let $\tau\to \infty$. Throughout the remainder of this section, the notation $C_{\varepsilon_0}$ denotes a generic constant that depends only on $\varepsilon_0$ (and also implicitly on the domain $\Omega$ and $T$). We remark that if a constant depends on $\varepsilon_1$ and $\varepsilon_2$ it may still be viewed as depending only on $\varepsilon_0$ as $\varepsilon_1$ and $\varepsilon_2$ depend on $\varepsilon_0$. We consider the smooth functions 
$$ u_1(t,x) = u_{\tau_1}^{(1)}(t,x)= e^{\tau_1^2t}(\mathcal U_{\tau_1}^{(1)}(x) \chi(x)+ R_{\tau_1}^{(1)}(t,x)) \quad \text{on $[\delta,T]\times \overline\Omega$},$$
and 
$$  u_2(t,x)= u_{\tau_2}^{(2)}(t,x)= e^{-\tau_2^2t}(\mathcal U^{(2)}_{\tau_2}(x) \chi(x)+ R_{\tau_2}^{(2)}(t,x)) \quad \text{on $[0,T-\delta]\times \overline\Omega$},$$
and plug these into equation \eqref{density_eq_loc}. We recall that for $j=1,2$, $\mathcal U_{\tau_j}^{(j)}$ is as in \eqref{principal_amp} with $\tau=\tau_j$ and $N=N_j$ (see \eqref{tau_1}-\eqref{tau_2}), that is to say, 
\begin{equation*}\label{principal_amp_j} 
	\mathcal U_{\tau_j}^{(j)}(r,\theta) =e^{-\tau_j r} (\underbrace{\sum\limits_{k=0}^{N}a_k^{(j)}(r)\tau_j^{-k}}_{A^{(j)}_{\tau_j}(r)})Y_{\sigma_j,\omega_j}(\theta), \quad r\in [\varepsilon_0,2\varepsilon_0] \quad \theta \in \mathbb S_+^{n-1},\end{equation*}
We also introduce
$$ q:= q_1-q_2 \quad \text{on $M$}.$$
We write
\begin{multline}\label{start_eq_den}
0=\int_\delta^{T-\delta}\int_\Omega q\, u_1\,u_2 \,dx\,dt\\
= \int_\delta^{T-\delta}\int_\Omega q\,e^{4\lambda t}(\mathcal U_{\tau_1}^{(1)}(x) \chi(x)+ R_{\tau_1}^{(1)}(t,x))\,(\mathcal U^{(2)}_{\tau_2}(x) \chi(x)+ R_{\tau_2}^{(2)}(t,x))\,dx\,dt\\
= \underbrace{\int_\delta^{T-\delta}\int_\Omega q\,e^{4\lambda t}\mathcal U_{\tau_1}^{(1)}(x) \mathcal U_{\tau_2}^{(2)}(x) \chi(x)^2\,dx\,dt}_{\textrm{I}}+ \underbrace{\int_\delta^{T-\delta}\int_\Omega q\,e^{4\lambda t}\mathcal U^{(1)}_{\tau_1}(x) R_{\tau_2}^{(2)}(t,x) \chi(x)\,dx\,dt}_{\textrm{II}}\\
+\underbrace{\int_\delta^{T-\delta}\int_\Omega q\,e^{4\lambda t}\mathcal U^{(2)}_{\tau_2}(x) R_{\tau_1}^{(1)}(t,x) \chi(x)\,dx\,dt}_{\textrm{III}}+ \underbrace{\int_\delta^{T-\delta}\int_\Omega q\,e^{4\lambda t}R_{\tau_1}^{(1)}(t,x) R_{\tau_2}^{(2)}(t,x)\,dx\,dt}_{\textrm{IV}}
\end{multline}
Applying Lemma~\ref{lem_amp_est} and Lemma~\ref{lem_remainder_est} it is straightforward to see that
\begin{equation}\label{II-III-IV}
|\textrm{II}| + |\textrm{III}| + |\textrm{IV}|  \leq C_{\varepsilon_0}\, e^{-(2\varepsilon_0+2\varepsilon_2)\tau},\end{equation}
for some constant $C_{\varepsilon_0}$ (depending only on $\varepsilon_0$, $\|q\|_{L^{\infty}(M)}$) and all $\tau>1+\min\{n,\frac{64e}{\varepsilon_0}\} $.
We turn now to term I which will take the bulk of our asymptotic analysis. Note that the integrand in the term I is supported near the point $p$ and thus we may turn to polar coordinates so that $(t,x)$ changes to $(t,r,\theta)$. We also extend $q$ to all of $(\delta,T-\delta)\times U$ (see \eqref{U_def}) by setting it to be zero outside of $M$. We can rewrite I as follows
$$\textrm{I} = \int_\delta^{T-\delta}\int_{\mathbb S_+^{n-1}}\int_{\varepsilon_0}^{2\varepsilon_0}  qe^{4\lambda t} e^{-2\tau r}\,A^{(1)}_{\tau_1}\,A^{(2)}_{\tau_2} Y_{\sigma_1,\omega_1}\,Y_{\sigma_2,\omega_2}\,\chi^2\,dt\,r^{n-1}\,dr\,dV_{\mathbb S_+^{n-1}}.$$
Recall from \eqref{start_eq_den}--\eqref{II-III-IV} that there holds 
$$ |\textrm{I}| \leq C_{\varepsilon_0}\, e^{-(2\varepsilon_0+2\varepsilon_2)\tau}.$$
Using \eqref{distant_formula} together with Definition~\ref{def_chi} we deduce that the contribution to the term I that comes from $B(p,\frac{\varepsilon_0}{2})\setminus B(p,\frac{\varepsilon_0}{4})$ is negligible since
$$ r \geq \varepsilon_0 + \varepsilon_1 \quad \text{on $B(p,\frac{\varepsilon_0}{2})\cap \overline\Omega\setminus B(p,\frac{\varepsilon_0}{4})$}.$$    This means that we may effectively assume $\chi=1$ (after changing $C_{\varepsilon_0}$ to a new constant) in the previous integral without losing any decay in $\tau$, that is to say,
\begin{multline*}
	\left|\int_\delta^{T-\delta}\,\int_{\mathbb S_+^{n-1}}\int_{\varepsilon_0}^{2\varepsilon_0}  q\,e^{4\lambda t} e^{-2\tau r}\,A_{\tau_1}^{(1)}(r)\,A_{\tau_2}^{(2)}(r) Y_{\sigma_1,\omega_1}(\theta)\,Y_{\sigma_2,\omega_2}(\theta)\,dt\,r^{n-1}\,dr\,dV_{\mathbb S_+^{n-1}}\right|\\
	\leq C_{\varepsilon_0}\, e^{-(2\varepsilon_0+2\varepsilon_2)\tau}.
\end{multline*}
Let us now define $Q \in L^{\infty}((\varepsilon_0,2\varepsilon_0))$ by 
\begin{equation}
	\label{def_Q}
	Q(r)= \int_\delta^{T-\delta}\int_{\mathbb S_+^{n-1}} q(t,r,\theta)\,e^{4\lambda t} Y_{\sigma_1,\omega_1}\,Y_{\sigma_2,\omega_2}\,dt\,dV_{\mathbb S_+^{n-1}} \quad \text{for a.e. $r\in (\varepsilon_0,2\varepsilon_0)$}.
\end{equation}
With this new definition, there holds:
\begin{equation}\label{Q_est}
	|\int_{\varepsilon_0}^{2\varepsilon_0} Q(r) \,e^{-2\tau r}\,A_{\tau_1}^{(1)}(r)\,A_{\tau_2}^{(2)}(r)\,r^{n-1}\,dr| \leq C_{\varepsilon_0}\, e^{-(2\varepsilon_0+2\varepsilon_2)\tau}.
	\end{equation}

Before proceeding any further with the asymptotic analysis of the above term as $\tau \to \infty$, we will need to study the term $A_{\tau_1}^{(1)}(r)A^{(2)}_{\tau_2}(r)$ in more detail. We start with defining the sequences $\{b_k\}_{k=0}^{\infty},\{d_k\}_{k=0}^{\infty},\{e_k\}_{k=0}^{\infty}\subset C^{\infty}([\varepsilon_0,2\varepsilon_0])$ as follows,
\begin{equation}
	\label{d_e_seq}
\begin{aligned}
	d_0(r)&=e_0(r)=a_0(r)=r^{-\frac{n-1}{2}}.\\
	d_k(r)&=\sum_{\substack{j=1\\\text{$k-j$ is even}}}^ka_j^{(1)}(r)\,(-\lambda)^{\frac{k-j}{2}} {{\frac{k+j-2}{2}}\choose{\frac{k-j}{2}}}\quad \forall\, k\in \N,\\
	e_k(r)&=\sum_{\substack{j=1\\\text{$k-j$ is even}}}^ka_j^{(2)}(r)\,\lambda^{\frac{k-j}{2}} {{\frac{k+j-2}{2}}\choose{\frac{k-j}{2}}} \quad \forall\, k\in \N,\\
	b_k(r)&= r^{n-1}\,\sum_{j=0}^k d_{k-j}(r)\,e_j(r) \quad k=0,1,2,\ldots
\end{aligned}\end{equation}
\begin{lemma}
	\label{lem_product_amp}
	Given any $\tau_1,\tau_2$ satisfying \eqref{tau_1}-\eqref{tau_2}, there holds
	$$ r^{n-1}\,A^{(1)}_{\tau_1}(r)\,A^{(2)}_{\tau_2}(r) = \sum_{k=0}^{N} \frac{b_k(r)}{\tau^k} + B_\tau(r), \quad \forall\, r\in [\varepsilon_0,2\varepsilon_0],$$ where $N=\lfloor\frac{\varepsilon_0\tau}{32e} \rfloor $ and the following statements are satisfied
	\begin{itemize}
		\item [(i)]{$b_0(r) =1$.}
		\item[(ii)]{$\|b_k\|_{L^{\infty}((\varepsilon_0,2\varepsilon_0))} \leq C_{\varepsilon_0}\,\left(\frac{4k}{\varepsilon_0}\right)^{k} $ for all $k\in \N$,}
		\item[(iii)]{$\|B_\tau\|_{L^{\infty}((\varepsilon_0,2\varepsilon_0))}\leq C_{\varepsilon_0}\,e^{-\frac{\tau \varepsilon_0}{64e}},$}
	\end{itemize}
for some $C_{\varepsilon_0}>0$ that is independent of $k$ and $\tau$.
\end{lemma}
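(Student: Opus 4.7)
The plan is to expand $\tau_1^{-k}$ and $\tau_2^{-k}$ as absolutely convergent power series in $\tau^{-1}$, substitute these into $A_{\tau_1}^{(1)}$ and $A_{\tau_2}^{(2)}$, and reorganize the product by powers of $\tau^{-1}$. For $\tau$ large enough that $\lambda/\tau^2 \leq 1/4$, the binomial expansion
$$\tau_1^{-j}=\tau^{-j}\sum_{m=0}^{\infty}(-\lambda)^m\binom{j+m-1}{m}\tau^{-2m}$$
holds (and analogously for $\tau_2^{-j}$ with $-\lambda$ replaced by $\lambda$). Substituting into $A_{\tau_1}^{(1)}=\sum_{j=0}^{N}a_j^{(1)}\tau_1^{-j}$ and reindexing by $k=j+2m$ rewrites $A_{\tau_1}^{(1)}$ as a series in $\tau^{-1}$ whose $\tau^{-k}$ coefficient for $k\leq N$ is exactly the $d_k(r)$ of \eqref{d_e_seq} (the $j=0$ contribution vanishes for $k\geq 1$, justifying starting the defining sum at $j=1$), and the analogous identity identifies $e_k$ for $A_{\tau_2}^{(2)}$. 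Since $d_0=e_0=r^{-(n-1)/2}$, property (i) follows at once from $b_0(r)=r^{n-1}\cdot r^{-(n-1)}=1$.

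Next, I would split $A_{\tau_j}^{(j)}=P_\tau^{(j)}+Q_\tau^{(j)}$ with $P_\tau^{(1)}=\sum_{k=0}^N d_k\tau^{-k}$ and $P_\tau^{(2)}=\sum_{k=0}^N e_k\tau^{-k}$, and expand
$$r^{n-1}A_{\tau_1}^{(1)}A_{\tau_2}^{(2)}=r^{n-1}P_\tau^{(1)}P_\tau^{(2)}+r^{n-1}\bigl(P_\tau^{(1)}Q_\tau^{(2)}+Q_\tau^{(1)}P_\tau^{(2)}+Q_\tau^{(1)}Q_\tau^{(2)}\bigr).$$
Cauchy multiplication of $P_\tau^{(1)}P_\tau^{(2)}$ produces $\sum_{k=0}^{2N}\bigl(\sum_{j=0}^k d_{k-j}e_j\bigr)\tau^{-k}$, whose lower half reproduces $\sum_{k=0}^N b_k\tau^{-k}/r^{n-1}$, so the indices $N<k\leq 2N$ together with the three $Q$-terms above will constitute $B_\tau/r^{n-1}$. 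Bound (ii) for $\|b_k\|_\infty$ is then a direct computation: insert $|c_k^{(j)}|\leq C2^{-k}(k+1)!$ from Lemma~\ref{lem_amplitudes} into the definitions of $d_k$ and $e_k$, use $\lambda\leq 1$ and $\binom{(k+j-2)/2}{(k-j)/2}\leq 2^{(k+j-2)/2}$, observe that the sums are dominated by their last summands, and absorb polynomial factors in $k$ via $k!\leq k^k$.

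The main technical obstacle is property (iii). For the Cauchy-product tail, (ii) gives $|b_k\tau^{-k}|\leq C_{\varepsilon_0}(4k/(\varepsilon_0\tau))^k$; since $k\leq 2N\leq \varepsilon_0\tau/(16e)$, the ratio $4k/(\varepsilon_0\tau)\leq 1/(4e)$, so this tail is controlled by a geometric series in $(4e)^{-1}$ starting at index $N$, with total size at most $Ce^{-\tau\varepsilon_0/(32e)}$. For the $Q_\tau^{(j)}$ terms, the Taylor remainder
$$\Bigl|(1+x)^{-j}-\sum_{m=0}^M\binom{-j}{m}x^m\Bigr|\leq \frac{2^{j-1}(2|x|)^{M+1}}{1-2|x|}\qquad (|x|\leq 1/4),$$
applied with $M=\lfloor(N-j)/2\rfloor$ and combined with $\|a_j^{(1)}\|_\infty\leq C_{\varepsilon_0}(j+1)!/(2\varepsilon_0)^j$, yields $\|Q_\tau^{(j)}\|_\infty\leq C_{\varepsilon_0}\tau^2\,e^{-c\tau}$ for some $c>0$; since $\|A_{\tau_j}^{(j)}\|_\infty\leq C_{\varepsilon_0}$ by Lemma~\ref{lem_amp_est}, the mixed and quadratic $Q_\tau$-contributions are even smaller, and after absorbing the polynomial-in-$\tau$ prefactors one recovers the rate $e^{-\tau\varepsilon_0/(64e)}$ claimed in (iii). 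The delicate step is balancing the factorial blow-up of $c_k^{(j)}$ and of the binomial coefficients against the gain $\tau^{-k}$ once $k>N$, and this is precisely the purpose of the particular choice $N=\lfloor\varepsilon_0\tau/(32e)\rfloor$.
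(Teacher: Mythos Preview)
Your proposal is correct and follows essentially the same route as the paper's proof: both expand $\tau_j^{-k}$ in powers of $\tau^{-1}$ (the paper via the Lagrange form of the Taylor remainder, you via the tail of the convergent binomial series), identify the resulting $\tau^{-k}$-coefficients as $d_k,e_k$, split $A_{\tau_j}^{(j)}$ into a degree-$N$ partial sum plus an exponentially small remainder (the paper's $D_\tau,E_\tau$ are your $Q_\tau^{(j)}$), and then Cauchy-multiply, putting the tail $N<k\le 2N$ together with the cross/quadratic remainder terms into $B_\tau$. The only cosmetic difference is that for $k>N$ the Cauchy product of the \emph{truncated} sums has inner index range $\max(0,k-N)\le j\le\min(k,N)$ rather than $0\le j\le k$; the paper keeps track of this explicitly when defining its $\hat B_\tau$, but since these terms are absorbed into $B_\tau$ and bounded uniformly anyway, your sketch loses nothing.
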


\begin{proof}
	We begin by claiming that for each $r\in [\varepsilon_0,2\varepsilon_0]$ and any $\tau_1,\tau_2$ as in \eqref{tau_1}-\eqref{tau_2} there holds
	\begin{equation}
		\label{A_tau1_exp}
		A_{\tau_1}^{(1)}(r)= \sum_{k=0}^N \frac{d_k(r)}{\tau^k} + D_\tau(r), \quad \text{and}\quad 	A^{(2)}_{\tau_2}(r)= \sum_{k=0}^N \frac{e_k(r)}{\tau^k} + E_\tau(r),
	\end{equation}
where $d_0(r)=e_0(r)=r^{-\frac{n-1}{2}}$ and we have the following estimates:
\begin{equation}
	\label{A_tau1_principal_est}
	 \|d_k\|_{L^{\infty}((\varepsilon_0,2\varepsilon_0))} \leq C_{\varepsilon_0} \left(\frac{2k}{\varepsilon_0}\right)^k \quad \text{and} \quad  \|e_k\|_{L^{\infty}((\varepsilon_0,2\varepsilon_0))} \leq C_{\varepsilon_0} \left(\frac{2k}{\varepsilon_0}\right)^k
\end{equation}
for all $k=1,2,\ldots$ and
\begin{equation}
	\label{A_tau1_remainder_est}
	\|D_{\tau}\|_{L^{\infty}((\varepsilon_0,2\varepsilon_0))} \leq C_{\varepsilon_0}e^{-\frac{\tau \varepsilon_0}{32e}} \quad \text{and} \quad 	\|E_{\tau}\|_{L^{\infty}((\varepsilon_0,2\varepsilon_0))} \leq C_{\varepsilon_0}e^{-\frac{\tau \varepsilon_0}{32e}}  ,
\end{equation}
for some constant $C_{\varepsilon_0}>0$ that is independent of $\tau$. We will only prove the estimates \eqref{A_tau1_principal_est}--\eqref{A_tau1_remainder_est} for $A_{\tau_1}^{(1)}$ as the estimates for $A_{\tau_2}^{(2)}$ follow analogously. Using Taylor series remainder estimates for the map $x\mapsto(1+x)^{-k}$, it is straightforward to write for each $k=1,\ldots,N$, 
$$ (\tau+\frac{\lambda}{\tau})^{-k} =\tau^{-k}\,(1+\frac{\lambda}{\tau^2})^{-k}= \sum_{\ell=0}^{\lfloor\frac{N-k}{2}\rfloor}(-\lambda)^{\ell} {{k+\ell-1}\choose{\ell}} \tau^{-k-2\ell} + L_{k} \quad k=1,\ldots,N,$$
where
$$L_k= \tau^{-k}\,{{m+k}\choose{m+1}}\,(1+x)^{-k-m-1} \left(\frac{\lambda}{\tau^2}\right)^{m+1}\, \quad m=\lfloor\frac{N-k}{2}\rfloor$$ 
where $x\in [1,1+\frac{\lambda}{\tau^2}]$ is some value depending on $\tau$ and $\lambda$. As $k\leq N$, and $\lambda \in [0,1]$, it follows that
\begin{equation}\label{error_sum_est} |L_k| \leq \left(\frac{2}{\tau}\right)^N \quad \text{for all $k=1,\ldots,N$}.\end{equation} 
We may therefore rewrite the above calculations as follows
$$ \left(\tau+\frac{\lambda}{\tau}\right)^{-k}= \sum_{j=k}^N \frac{s_{k,j}}{\tau^j} + L_k,$$
where given any $k=1,\ldots,N$ and $j=k,\ldots,N$, we define
\begin{equation}\label{def_s_k_j} 
	s_{k,j}=\begin{cases}
(-\lambda)^{\frac{j-k}{2}} {{\frac{j+k-2}{2}}\choose{\frac{j-k}{2}}} \quad &\text{if $j-k$ is even}\\
0 \quad &\text{if $j-k$ is odd.}
\end{cases}\end{equation} 
As $|\lambda| \leq 1$ and $k\leq j$, we note that
\begin{equation}\label{s_L_est}
|s_{k,j}|\leq 2^j \quad \text{for $k=1,\ldots,N$ and $j=k,\ldots,N$}.
	\end{equation}
We have
\begin{multline*} A_{\tau_1}^{(1)}(r) = a_0^{(1)}(r)+\sum_{k=1}^N \frac{a_k^{(1)}(r)}{\tau_1^k} =a_0^{(1)}(r)+ \sum_{k=1}^N\sum_{j=k}^N a_k^{(1)}(r)\,s_{k,j}\,\tau^{-j} + \sum_{k=1}^N a^{(1)}_k(r)\,L_k\\
 = a_0^{(1)}(r)+\sum_{k=1}^N(\sum_{j=1}^k a^{(1)}_j(r)\,s_{j,k})\,\tau^{-k} + \sum_{k=0}^N a^{(1)}_k(r)\,L_k
\end{multline*}
Recalling Definition~\ref{def_amplitudes} together with Lemma~\ref{lem_amplitudes} and the estimate \eqref{error_sum_est} and using the fact that $2^{-k}k!\leq k^k$, for all $r\in[\varepsilon_0,2\varepsilon_0]$, we write
\begin{multline}\label{A_tau1_remainder} 
	|\sum_{k=1}^N a_k^{(1)}(r)\,L_k|\leq C\sum_{k=1}^N r^{-\frac{n-1}{2}-k} 2^{-k}(k+1)!\, \left(\frac{2}{\tau}\right)^{N} \leq C_{\varepsilon_0}\,(N+1) \, \,\left(\frac{2N}{\tau\varepsilon_0}\right)^N\\
\leq  C_{\varepsilon_0}\,\,(N+1) \, \,\left(\frac{1}{4e}\right)^{N} \leq C_{\varepsilon_0}\, e^{-\frac{\tau \varepsilon_0}{32e}},
\end{multline}
for some constant $C_{\varepsilon_0}$ that is independent of $\tau$ that may change from line to line. Next, recalling the definition of $s_{k,j}$ from \eqref{def_s_k_j} (we caution the reader that the role of $j$ and $k$ have been switched here), we note that for each $k=1,\ldots,N,$   
\begin{equation}
\label{eq_d_def}
\sum_{j=1}^k a_j^{(1)}(r)\,s_{j,k}=\sum_{\substack{j=1\\\text{$k-j$ is even}}}^ka_j^{(1)}(r)\,(-\lambda)^{\frac{k-j}{2}} {{\frac{k+j-2}{2}}\choose{\frac{k-j}{2}}} 
	\end{equation}
and therefore we obtain the claimed form \eqref{A_tau1_exp} for $A^{(1)}_{\tau_1}$ with the correct remainder estimate \eqref{A_tau1_remainder_est} that follows directly from \eqref{A_tau1_remainder}. Using \eqref{s_L_est}, the estimate \eqref{A_tau1_principal_est} for $d_k$ can also be obtained as follows:
\begin{multline*}
	\|d_k\|_{L^{\infty}((\varepsilon_0,2\varepsilon_0))} \leq  2^k\sum_{j=1}^k \|a_j^{(1)}\|_{L^{\infty}((\varepsilon_0,2\varepsilon_0))}
	\leq  2^{k}\sum_{j=1}^k \left(\frac{1}{\varepsilon_0}\right)^{\frac{n-1}{2}+j} \,2^{-j}(j+1)!\\
	 \leq 2^{k}\, \left(\frac{1}{\varepsilon_0}\right)^{\frac{n-1}{2}+k}\,(k+1)!\,\sum_{j=1}^{\infty}2^{-j}\\
	\leq  e\,\left(\frac{1}{\varepsilon_0}\right)^{\frac{n-1}{2}}\,2^{k}\,k^k\, \left(\frac{1}{\varepsilon_0}\right)^{k}\leq C_{\varepsilon_0} \left(\frac{2k}{\varepsilon_0}\right)^k.
\end{multline*}
We can derive the estimate \eqref{A_tau1_principal_est} for $e_k$ by applying similar arguments.
We are now ready to prove the claims of the lemma. The first claim (i) is trivial and follows from the definition \eqref{d_e_seq}. For (ii), observe from the estimates \eqref{A_tau1_principal_est} that there is a constant $C_{\varepsilon_0}$ independent of $\tau$ such that
$$ \|b_k\|_{L^{\infty}((\varepsilon_0,2\varepsilon_0))} \leq C_{\varepsilon_0}\,\sum_{j=0}^k \left(\frac{2(k-j)}{\varepsilon_0}\right)^{k-j}\,\left(\frac{2j}{\varepsilon_0}\right)^j\leq  C_{\varepsilon_0}\,(k+1)\,\left(\frac{2k}{\varepsilon_0}\right)^k \leq C_{\varepsilon_0}\,\left(\frac{4k}{\varepsilon_0}\right)^k.$$

Next, let us prove the claim (iii). Using \eqref{A_tau1_exp}-\eqref{A_tau1_remainder_est} together with Lemma~\ref{lem_amp_est}, we write
\begin{equation}\label{A_prod_est_1}
	r^{n-1}A^{(1)}_{\tau_1}(r)\,A^{(2)}_{\tau_2}(r) = r^{(n-1)} \sum_{k=0}^{2N}\sum_{j=\max\{0,k-N\}}^{\min\{k,N\}} \tau^{-k}\,d_{j}(r)\, e_{k-j}(r)+\tilde{B}_\tau, 
\end{equation}
where 
$$\|\tilde B_\tau\|_{L^{\infty}((\varepsilon_0,2\varepsilon_0))} \leq C_{\varepsilon_0}\,e^{-\frac{\tau \varepsilon_0}{32e}},$$
for some constant $C_{\varepsilon_0}$ that is independent of $\tau$. Next, note that 
\begin{multline}
r^{n-1}\sum_{k=0}^{2N}\sum_{j=\max\{0,k-N\}}^{\min\{k,N\}} \tau^{-k}\,d_{j}(r)\, e_{k-j}(r)\\
= \sum_{k=0}^{N} \tau^{-k}\,b_k(r)+\underbrace{r^{n-1}\sum_{k=N+1}^{2N}\sum_{j=k-N}^{N}\tau^{-k}d_j(r)e_{k-j}(r)}_{\hat{B}_\tau}.\end{multline}
Using the estimates \eqref{A_tau1_principal_est} we write
\begin{multline*}
	|\hat{B}_\tau| \leq C_{\varepsilon_0}\,\sum_{k=N+1}^{2N}\sum_{j=k-N}^{N}\tau^{-k}\, \left(\frac{2j}{\varepsilon_0}\right)^j\,\left(\frac{2(k-j)}{\varepsilon_0}\right)^{k-j}\\
	\leq C_{\varepsilon_0}\,\sum_{k=N+1}^{2N}\sum_{j=k-N}^{N}\tau^{-k}\,\left(\frac{2k}{\varepsilon_0}\right)^{k}\leq C_{\varepsilon_0}\,\sum_{k=N+1}^{2N}\tau^{-k}\,N\, \left(\frac{2k}{\varepsilon_0}\right)^{k}\\
	\leq C_{\varepsilon_0}\,\sum_{k=N+1}^{2N} N\, e^{-k}\leq C_{\varepsilon_0} \frac{e}{e-1}\, e^{-\frac{N+1}{2}} \leq C_{\varepsilon_0} \frac{e}{e-1}\, e^{-\frac{\varepsilon_0\tau}{64e}}.
\end{multline*}
We now define 
$$ B_\tau = \tilde B_\tau + \hat B_\tau,$$
and note that 
$$ r^{n-1}A_{\tau_1}(r)\,A_{\tau_2}(r) = \sum_{k=0}^{N} \tau^{-k}\underbrace{r^{n-1}\sum_{j=0}^{k}\,d_{j}(r)\, e_{k-j}(r)}_{b_k(r)}\,+B_\tau(r),$$
and that (iii) follows directly from the estimates for $ \tilde B_\tau$ and  $\hat B_\tau$.
\end{proof}

We return to \eqref{Q_est}, written as follows
$$
	\left|\int_{\varepsilon_0}^{2\varepsilon_0} Q(r) \,e^{-2\tau r}\,\left(\sum_{k=0}^N \frac{b_k(r)}{\tau^k}+ B_\tau(r)\right)\,dr\right| \leq C_{\varepsilon_0}\, e^{-(2\varepsilon_0+2\varepsilon_2)\tau}.
$$
Using the triangle inequality, recalling that $\varepsilon_2<\frac{\varepsilon_1}{32e}<\frac{\varepsilon_0}{128e}$ together with (iii) from Lemma~\ref{lem_product_amp} we write
\begin{equation}
\label{key_iden_asymp}
	\left|\int_{\varepsilon_0}^{2\varepsilon_0} Q(r) \,e^{-2\tau r}\,\left(\sum_{k=0}^N \frac{b_k(r)}{\tau^k}\right)\,dr\right|\leq C\, e^{-(2\varepsilon_0+2\varepsilon_2)\tau},
\end{equation}
for some constant $C>0$ that depends only on $\varepsilon_0$ and $\|q\|_{L^{\infty}(M)}.$ To complete the proof of Theorem~\ref{thm_density_loc} we need to study the above equation as $\tau \to \infty$. We start with a definition.
\begin{definition}
	\label{int_r}
	Given any $f\in L^{\infty}((\varepsilon_0,2\varepsilon_0))$, we define
	$$ \mathcal I f(r) = \int_{\varepsilon_0}^r f(s)\,ds,$$
	and write $$ \mathcal I^{k}f(r) = (\underbrace{\mathcal I \circ \mathcal I\circ \ldots\circ \mathcal I}_{\text{$k$ times}}f)(r),$$
	for all $k=1,2,\ldots$. We also define $\mathcal I^{0}f=f$.
\end{definition}
It is straightforward to write an explicit formula for $\mathcal I^{k}f$  as follows
\begin{equation}
	\label{int_comp}
	\mathcal I^kf(r) = \int_{\varepsilon_0}^r \frac{(r-s)^{k-1}}{(k-1)!}f(s)\,ds, \quad \forall\, k\in \N.
\end{equation}
Consequently, we also deduce the following bound
\begin{equation}
	\label{int_comp_bound}
	\|\mathcal I^kf\|_{L^{\infty}((\varepsilon_0,2\varepsilon_0))} \leq \frac{\varepsilon_0^k}{k!}\,\|f\|_{L^{\infty}((\varepsilon_0,2\varepsilon_0))} \quad k=0,1,\ldots.
\end{equation}
Using Definition~\ref{int_r}, we have the following lemma.
\begin{lemma}
	\label{lem_int_comp}
	There exists a constant $C>0$ depending only on $\varepsilon_0$ and $\|Q\|_{L^{\infty}((\varepsilon_0,2\varepsilon_0))}$ such that
	$$
	\sum_{k=0}^N\,\left|\int_{\varepsilon_0}^{2\varepsilon_0} Q(r)\,e^{-2\tau r}\,\tau^{-k}\,b_k(r)\,dr -\int_{\varepsilon_0}^{2\varepsilon_0} 2^k\,e^{-2\tau r}\,\mathcal I^k(Qb_k)(r)\,dr \right| \leq C\, e^{-(2\varepsilon_0+2\varepsilon_2)\tau},
$$
for all $\tau$ as in \eqref{tau_range}.

\end{lemma}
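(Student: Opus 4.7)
The plan is to perform $k$-fold integration by parts on each summand, converting the factor $\tau^{-k}$ into $k$ applications of $\mathcal I$ acting on $Qb_k$, and then to show that the boundary terms generated at $r=2\varepsilon_0$ are acceptably small thanks to the constraint $N\leq \varepsilon_0\tau/(32e)$. The key algebraic identity is that, with $f=Qb_k$ and using $\mathcal If(\varepsilon_0)=0$, a single integration by parts yields
\begin{equation*}
\int_{\varepsilon_0}^{2\varepsilon_0} f(r)\,e^{-2\tau r}\,dr = e^{-4\tau\varepsilon_0}\,\mathcal I f(2\varepsilon_0) + 2\tau\int_{\varepsilon_0}^{2\varepsilon_0} \mathcal I f(r)\,e^{-2\tau r}\,dr.
\end{equation*}
Iterating this $k$ times (for $k\geq 1$) and dividing through by $\tau^k$ gives
\begin{equation*}
\tau^{-k}\!\int_{\varepsilon_0}^{2\varepsilon_0}\! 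Qb_k\,e^{-2\tau r}\,dr - 2^k\!\int_{\varepsilon_0}^{2\varepsilon_0}\! \mathcal I^k(Qb_k)\,e^{-2\tau r}\,dr = e^{-4\tau\varepsilon_0}\sum_{j=1}^{k} 2^{j-1}\tau^{j-1-k}\,\mathcal I^j(Qb_k)(2\varepsilon_0).
\end{equation*}
For $k=0$ the left hand side is $0$, so one only needs to estimate the right hand side for $k\geq 1$ and sum.

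For the estimate, I would combine \eqref{int_comp_bound}, which gives $|\mathcal I^j(Qb_k)(2\varepsilon_0)|\leq (\varepsilon_0^j/j!)\|Q\|_\infty\|b_k\|_\infty$, with item (ii) of Lemma~\ref{lem_product_amp}, which gives $\|b_k\|_\infty\leq C_{\varepsilon_0}(4k/\varepsilon_0)^k$. Substituting and rearranging produces
\begin{equation*}
\Big|\sum_{j=1}^{k} 2^{j-1}\tau^{j-1-k}\mathcal I^j(Qb_k)(2\varepsilon_0)\Big| \leq \frac{C_{\varepsilon_0}(4k)^k}{2\tau^{k+1}\varepsilon_0^k}\sum_{j=1}^{k}\frac{(2\varepsilon_0\tau)^j}{j!}.
\end{equation*}
Since $k\leq N\leq \varepsilon_0\tau/(32e)\ll 2\varepsilon_0\tau$, the sequence $j\mapsto (2\varepsilon_0\tau)^j/j!$ is monotone increasing on $1\leq j\leq k$, so the inner sum is at most $k(2\varepsilon_0\tau)^k/k!$, and using the crude bound $k!\geq (k/e)^k$ the whole right hand side simplifies to $(Ck/\tau)\,e^{-4\tau\varepsilon_0}\,(8e)^k$.

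Summing over $k=1,\ldots,N$ gives
\begin{equation*}
\sum_{k=1}^{N}\!\Big|\tau^{-k}\!\int Qb_ke^{-2\tau r}dr - 2^k\!\int\mathcal I^k(Qb_k)e^{-2\tau r}dr\Big|\leq C_{\varepsilon_0}\,N^2\,(8e)^N\,e^{-4\tau\varepsilon_0}/\tau.
\end{equation*}
The main obstacle is then just the arithmetic of comparing exponents: since $N\leq \varepsilon_0\tau/(32e)$ we have $(8e)^N\leq \exp(\varepsilon_0\tau\log(8e)/(32e))$, and a numerical check shows $\log(8e)/(32e)<1/20$, so the total exponential is at most $\exp(-(4-\tfrac{1}{20})\varepsilon_0\tau)$. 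Because $\varepsilon_2<\varepsilon_1/(32e)<\varepsilon_0/(128e)$, the target rate $2\varepsilon_0+2\varepsilon_2$ is less than $2.01\,\varepsilon_0$, which is comfortably smaller than $(4-\tfrac{1}{20})\varepsilon_0$; hence after absorbing the polynomial factors $N^2/\tau$ into the exponential, the bound $C\,e^{-(2\varepsilon_0+2\varepsilon_2)\tau}$ follows with $C$ depending only on $\varepsilon_0$ and $\|Q\|_{L^\infty}$, completing the proof of Lemma.
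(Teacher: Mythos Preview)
Your proof is correct and follows essentially the same route as the paper: $k$-fold integration by parts converts $\tau^{-k}$ into $2^k\mathcal I^k$, the boundary contributions at $r=2\varepsilon_0$ are controlled via \eqref{int_comp_bound} and Lemma~\ref{lem_product_amp}(ii), and the constraint $N\leq \varepsilon_0\tau/(32e)$ forces the resulting exponential $(8e)^N e^{-4\varepsilon_0\tau}$ well below $e^{-(2\varepsilon_0+2\varepsilon_2)\tau}$. The only cosmetic difference is in the bookkeeping of the boundary sum: the paper indexes it as $\sum_{j=1}^k 2^{k-j}\tau^{-j}\mathcal I^{k-j+1}(Qb_k)(2\varepsilon_0)$ and bounds it via $\binom{k}{j-1}\leq 2^k$ to reach $(16e)^k$, whereas your monotonicity argument on $j\mapsto (2\varepsilon_0\tau)^j/j!$ gives the slightly sharper $(8e)^k$; both are comfortably sufficient.
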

\begin{proof}
	Note that when $k=0$, the left hand side of the above summation is zero since $b_0=1$. Therefore, it suffices to consider the summation from $k=1$ to $k=N$. We begin by considering 
	$$ \int_{\varepsilon_0}^{2\varepsilon_0} Q(r)\,e^{-2\tau r}\,\tau^{-k}\,b_k(r)\,dr \quad \forall\, k\in \N$$
	and apply integration by parts repeatedly until all the negative powers, $\tau^{-k}$, are removed. This yields
	\begin{multline}\label{Q_int_comp_formula}
		 \int_{\varepsilon_0}^{2\varepsilon_0} Q(r)\,e^{-2\tau r}\,\tau^{-k}\,b_k(r)\,dr -\int_{\varepsilon_0}^{2\varepsilon_0}
		 2^k\,e^{-2\tau r}\,(\mathcal I^kQb_k)(r)\,dr\\
		 = e^{-4\varepsilon_0\tau} \left(\sum_{j=1}^k\frac{2^{k-j}}{\tau^j} (\mathcal I^{k-j+1}Qb_k)|_{r=2\varepsilon_0} \right)
		\end{multline}
	Recalling the estimate (ii) in Lemma~\ref{lem_product_amp}, together with \eqref{int_comp_bound} and the fact that 
	$$k^k \leq e^k\,k! \quad \forall\, k\in \N,$$ 
	we write
	\begin{multline*}
	 \left|\sum_{j=1}^k\frac{2^{k-j}}{\tau^j} (\mathcal I^{k-j+1}Qb_k)(2\varepsilon_0) \right|\leq C_{\varepsilon_0}\|Q\|_{L^{\infty}((\varepsilon_0,2\varepsilon_0))}\sum_{j=1}^k 2^{k-j}\tau^{-j}\,\frac{\varepsilon_0^{k-j+1}}{(k-j+1)!}\left(\frac{4k}{\varepsilon_0}\right)^k\\
	 \leq C_{\varepsilon_0}\|Q\|_{L^{\infty}((\varepsilon_0,2\varepsilon_0))}\sum_{j=1}^k {(2\varepsilon_0)}^{k-j}\tau^{-j}\,\frac{1}{(k-j+1)!}\left(\frac{4e}{\varepsilon_0}\right)^k\, k!\\
	 =C_{\varepsilon_0}\|Q\|_{L^{\infty}((\varepsilon_0,2\varepsilon_0))}\sum_{j=1}^k {(2\varepsilon_0)}^{k-j}\tau^{-j}\,(j-1)!\,{{k}\choose{j-1}}\left(\frac{4e}{\varepsilon_0}\right)^k\\
	 \leq C_{\varepsilon_0}\|Q\|_{L^{\infty}((\varepsilon_0,2\varepsilon_0))}\sum_{j=1}^k {(2\varepsilon_0)}^{k-j}\tau^{-j}\,j^j\,2^{k}\,\left(\frac{4e}{\varepsilon_0}\right)^k\\
	  \leq  C_{\varepsilon_0}\|Q\|_{L^{\infty}((\varepsilon_0,2\varepsilon_0))}(16e)^k\sum_{j=1}^k \left(\frac{j}{2\tau\varepsilon_0}\right)^j.
	\end{multline*}
Noting that $k\leq N$, we have the bounds
$$ (16e)^k \leq (16e)^N \leq e^{\frac{\varepsilon_0\tau}{8e}},$$
and 
$$ 
\sum_{j=1}^k \left(\frac{j}{2\tau\varepsilon_0}\right)^j \leq \sum_{j=1}^k \left(\frac{N}{2\tau\varepsilon_0}\right)^j \leq \sum_{j=1}^k \left(\frac{1}{64e}\right)^j <1.
$$
Therefore, 
$$
 \left|\sum_{j=1}^k\frac{2^{k-j}}{\tau^j} (\mathcal I^{k-j+1}Qb_k)(2\varepsilon_0) \right|\leq C_{\varepsilon_0}\|Q\|_{L^{\infty}((\varepsilon_0,2\varepsilon_0))}\,e^{\frac{\varepsilon_0\tau}{8e}}.
$$
Combining the above inequality with \eqref{Q_int_comp_formula} we deduce that
\begin{multline*}
	\sum_{k=0}^N\left|\int_{\varepsilon_0}^{2\varepsilon_0} Q(r)\,e^{-2\tau r}\,\tau^{-k}\,b_k(r)\,dr -\int_{\varepsilon_0}^{2\varepsilon_0} 2^k\,e^{-2\tau r}\,\mathcal I^k(Qb_k)(r)\,dr \right| \\
	\leq \sum_{k=1}^NC_{\varepsilon_0}\|Q\|_{L^{\infty}((\varepsilon_0,2\varepsilon_0))}\,e^{-4\varepsilon_0\tau}\, e^{\frac{\varepsilon_0\tau}{8e}} \leq C\, e^{-(2\varepsilon_0+2\varepsilon_2)\tau},
\end{multline*}
for some constant $C>0$ that depends only on $\varepsilon_0$ and $\|Q\|_{L^{\infty}((\varepsilon_0,2\varepsilon_0))}$.
\end{proof}
Applying Lemma~\ref{lem_int_comp}, the bound \eqref{key_iden_asymp} can now be rewritten as follows:
\begin{equation}
	\label{key_iden_asymp_1}
	\left|\int_{\varepsilon_0}^{2\varepsilon_0} e^{-2\tau r}\left(\sum_{k=0}^N2^k\,\mathcal I^k(Qb_k)(r)\right)\,dr \right| \leq C\, e^{-(2\varepsilon_0+2\varepsilon_2)\tau},
\end{equation}
for some constant $C>0$ that depends only on $\varepsilon_0$ and $\|Q\|_{L^{\infty}((\varepsilon_0,2\varepsilon_0))}$. Next, we have the following lemma.
\begin{lemma}
	\label{lem_key_asymp}
	There is a constant $C>0$ depending only on $\|Q\|_{L^{\infty}((\varepsilon_0,2\varepsilon_0))}$ and $\varepsilon_0$ such that 
	\begin{equation}\label{key_asymp_est} \left|\int_{\varepsilon_0}^{\varepsilon_0+\varepsilon_2} e^{-2\tau r}\left(\sum_{k=0}^N2^k\,\mathcal I^k(Qb_k)(r)\right)\,dr \right| \leq C\, e^{-(2\varepsilon_0+2\varepsilon_2)\tau}, \end{equation}
	for all $\tau$ as in \eqref{tau_range}.
\end{lemma}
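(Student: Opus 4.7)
The plan is to reduce the claim to bounding a tail integral. Writing
\begin{equation*}
\int_{\varepsilon_0}^{\varepsilon_0+\varepsilon_2} e^{-2\tau r}S(r)\,dr = \int_{\varepsilon_0}^{2\varepsilon_0} e^{-2\tau r}S(r)\,dr - \int_{\varepsilon_0+\varepsilon_2}^{2\varepsilon_0} e^{-2\tau r}S(r)\,dr,
\end{equation*}
where $S(r):=\sum_{k=0}^N 2^k\mathcal I^k(Qb_k)(r)$, the first integral is already bounded by $Ce^{-(2\varepsilon_0+2\varepsilon_2)\tau}$ via \eqref{key_iden_asymp_1}. So the task is to establish the same bound for the tail integral on $[\varepsilon_0+\varepsilon_2,2\varepsilon_0]$.

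To handle the tail, I would apply a shifted analogue of the integration-by-parts identity from Lemma~\ref{lem_int_comp}. Iterating $k$ integrations by parts on the interval $[a,b]$ (the lower boundary terms no longer vanish since the base point of $\mathcal I^k$ is $\varepsilon_0 \neq a$) one gets, for each $k$,
\begin{equation*}
2^k\!\int_{a}^{b}\! e^{-2\tau r}\mathcal I^k(Qb_k)dr = \tau^{-k}\!\int_{a}^{b}\! e^{-2\tau r}Qb_k\,dr + \sum_{j=0}^{k-1}\tfrac{2^{k-j-1}}{\tau^{j+1}}\!\left[e^{-2\tau a}\mathcal I^{k-j}(Qb_k)(a)-e^{-2\tau b}\mathcal I^{k-j}(Qb_k)(b)\right]\!,
\end{equation*}
with $a=\varepsilon_0+\varepsilon_2$ and $b=2\varepsilon_0$. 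Summing over $k=0,\dots,N$ decomposes the tail into a main term, a boundary contribution at $b=2\varepsilon_0$ carrying prefactor $e^{-4\varepsilon_0\tau}$, and a boundary contribution at $a=\varepsilon_0+\varepsilon_2$ carrying prefactor $e^{-2(\varepsilon_0+\varepsilon_2)\tau}$. The main term is $\int_{a}^{b}Qe^{-2\tau r}\sum_{k=0}^N\tau^{-k}b_k(r)\,dr$, and by Lemma~\ref{lem_product_amp} the amplitude sum equals $r^{n-1}A^{(1)}_{\tau_1}A^{(2)}_{\tau_2}-B_\tau$, which is uniformly bounded by $C_{\varepsilon_0}$; this yields the estimate $Ce^{-2(\varepsilon_0+\varepsilon_2)\tau}/\tau\leq Ce^{-(2\varepsilon_0+2\varepsilon_2)\tau}$. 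The boundary contribution at $2\varepsilon_0$ is easily negligible: the $e^{-4\varepsilon_0\tau}$ factor absorbs the growth of the double sum, whose worst-case bound (using $\|b_k\|_{L^\infty}\leq C(4k/\varepsilon_0)^k$) is of order $e^{c\varepsilon_0\tau}$ with $c<1$, giving total decay much faster than $e^{-(2\varepsilon_0+2\varepsilon_2)\tau}$.

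The delicate piece, which I expect to be the main obstacle, is the boundary contribution at $\varepsilon_0+\varepsilon_2$. Estimating $|\mathcal I^\ell(Qb_k)(\varepsilon_0+\varepsilon_2)|\leq \varepsilon_2^\ell\|Qb_k\|_{L^\infty}/\ell!$ and interchanging the order of summation via $\ell=k-j$ reduces this piece to a constant multiple of
\begin{equation*}
e^{-2(\varepsilon_0+\varepsilon_2)\tau}\,\sum_{\ell=1}^N \frac{(2\varepsilon_2\tau)^\ell}{\ell!}\sum_{k=\ell}^N\tau^{-k-1}(4k/\varepsilon_0)^k.
\end{equation*}
The crucial observation is that for $k\leq N\leq \varepsilon_0\tau/(32e)$ the consecutive ratios of the inner terms are $\leq 4e(k+1)/(\tau\varepsilon_0)\leq 1/8$ --- this is exactly where the specific choice of $N$ in \eqref{tau_range} is used. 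Hence the inner tail sum is bounded geometrically by $2\tau^{-\ell-1}(4\ell/\varepsilon_0)^\ell$, and the outer sum collapses to $\sum_\ell (8\ell\varepsilon_2/\varepsilon_0)^\ell/\ell!$. Using $\ell!\geq (\ell/e)^\ell$ together with the constraint $\varepsilon_2<\varepsilon_0/(128e)$ guarantees $(8e\varepsilon_2/\varepsilon_0)^\ell\leq(1/16)^\ell$, so the sum is a convergent geometric series. This yields a bound of order $Ce^{-2(\varepsilon_0+\varepsilon_2)\tau}/\tau\leq Ce^{-(2\varepsilon_0+2\varepsilon_2)\tau}$, completing the argument.
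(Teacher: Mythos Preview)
Your argument is correct, but it takes a genuinely different route from the paper's proof. The paper never undoes the integration by parts; instead it bounds the integrand $\sum_{k=0}^N 2^k\mathcal I^k(Qb_k)(r)$ directly in $L^\infty$ on the tail $[\varepsilon_0+\varepsilon_2,2\varepsilon_0]$, after a further split at $\varepsilon_0+\frac{\varepsilon_0}{9e}$. On the near piece $[\varepsilon_0+\varepsilon_2,\varepsilon_0+\frac{\varepsilon_0}{9e}]$, the estimate $\|\mathcal I^k(Qb_k)\|_{L^\infty}\leq (\varepsilon_0/9e)^k\|Qb_k\|_{L^\infty}/k!$ together with $k^k\leq e^k k!$ makes the series $\sum_k 2^k(\varepsilon_0/9e)^k(4k/\varepsilon_0)^k/k!\leq\sum_k(8/9)^k$ convergent, so the sum is uniformly bounded and the factor $e^{-2\tau r}\leq e^{-2(\varepsilon_0+\varepsilon_2)\tau}$ does the rest. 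On the far piece $[\varepsilon_0+\frac{\varepsilon_0}{9e},2\varepsilon_0]$, the sum is only bounded by $(8e)^N\leq e^{\varepsilon_0\tau/(8e)}$, but this is absorbed by $e^{-2\tau r}\leq e^{-2(\varepsilon_0+\varepsilon_0/(9e))\tau}$ since $\frac{1}{8e}-\frac{2}{9e}\leq -\frac{2\varepsilon_2}{\varepsilon_0}$.

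Your approach trades this interval splitting for an integration-by-parts reversal that reconnects to the amplitude representation in Lemma~\ref{lem_product_amp}; the price is the extra boundary contribution at $\varepsilon_0+\varepsilon_2$, which you handle cleanly via the geometric tail bound on $\sum_{k\geq\ell}\tau^{-k}(4k/\varepsilon_0)^k$ and the constraint $8e\varepsilon_2/\varepsilon_0<1$. Both proofs ultimately hinge on the same smallness of $\varepsilon_2$ relative to $\varepsilon_0$. The paper's route is slightly more elementary (pure pointwise bounds, no IBP), while yours makes the structural link to Lemma~\ref{lem_int_comp} and Lemma~\ref{lem_product_amp} more transparent; note that your boundary term at $2\varepsilon_0$ is in fact identical to the one already estimated in Lemma~\ref{lem_int_comp}, so you could simply cite that bound.
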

\begin{proof}
We claim that
	\begin{equation}
		\label{key_asymp_est_1}
		\left|\int_{\varepsilon_0+\varepsilon_2}^{\varepsilon_0+\frac{\varepsilon_0}{9e}} e^{-2\tau r}\left(\sum_{k=0}^N2^k\,\mathcal I^k(Qb_k)(r)\right)\,dr \right| \leq C\, e^{-(2\varepsilon_0+2\varepsilon_2)\tau}
	\end{equation}
for some constant $C>0$ that depends only on $\|Q\|_{L^{\infty}((\varepsilon_0,2\varepsilon_0))}$ and $\varepsilon_0$. 
Indeed, note that on the interval $I=[\varepsilon_0,\varepsilon_0+\frac{\varepsilon_0}{9e}]$ there holds:
$$ \|\sum_{k=0}^N2^k\,\mathcal I^k(Qb_k)\|_{L^{\infty}(I)} \leq \|Q\|_{L^{\infty}(I)}\,C_{\varepsilon_0}\,\sum_{k=0}^N 2^k \left(\frac{4k}{\varepsilon_0}\right)^k\,\left(\frac{\varepsilon_0}{9e}\right)^k\frac{1}{k!},$$
where we have used the bound (ii) in Lemma~\ref{lem_product_amp} together with the analogue of \eqref{int_comp_bound} but restricted on the interval I. Using $k^k \leq e^k k!$, the above estimate reduces to 
$$ \|\sum_{k=0}^N2^k\,\mathcal I^k(Qb_k)\|_{L^{\infty}(I)} \leq \|Q\|_{L^{\infty}(I)}\,C_{\varepsilon_0}\,\sum_{k=0}^N  \left(\frac{8e}{\varepsilon_0}\right)^k\,\left(\frac{\varepsilon_0}{9e}\right)^k\leq 9\|Q\|_{L^{\infty}(I)}\,C_{\varepsilon_0}
$$
The claim \eqref{key_asymp_est_1} follows immediately from the above inequality. Next, we claim that 
\begin{equation}
	\label{key_asymp_est_2}
	\left|\int_{\varepsilon_0+\frac{\varepsilon_0}{9e}}^{2\varepsilon_0} e^{-2\tau r}\left(\sum_{k=0}^N2^k\,\mathcal I^k(Qb_k)(r)\right)\,dr \right| \leq C\, e^{-(2\varepsilon_0+2\varepsilon_2)\tau}
\end{equation}
for some constant $C>0$ that depends only on $\|Q\|_{L^{\infty}((\varepsilon_0,2\varepsilon_0))}$ and $\varepsilon_0$.  In order to prove \eqref{key_asymp_est_2} we first note that on the interval $I'=[\varepsilon_0+\frac{\varepsilon_0}{9e},2\varepsilon_0]$ there holds
\begin{multline*} \|\sum_{k=0}^N2^k\,\mathcal I^k(Qb_k)\|_{L^{\infty}(I')} \leq \|Q\|_{L^{\infty}(I')}\,C_{\varepsilon_0}\,\sum_{k=0}^N 2^k \left(\frac{4k}{\varepsilon_0}\right)^k\,(\varepsilon_0)^k\frac{1}{k!}\\
	\leq \|Q\|_{L^{\infty}(I')}\,C_{\varepsilon_0}\,\sum_{k=0}^N \left(\frac{8e}{\varepsilon_0}\right)^k\,(\varepsilon_0)^k \leq  \|Q\|_{L^{\infty}(I')}\,C_{\varepsilon_0}\,\frac{8e}{8e-1}\,(8e)^{N}\\
	\|Q\|_{L^{\infty}(I')}\,C_{\varepsilon_0}\,\frac{8e}{8e-1}\,e^{4N} \leq \|Q\|_{L^{\infty}(I')}\,C_{\varepsilon_0}\,\frac{8e}{8e-1}\,e^{\frac{\varepsilon_0}{8e}\tau}. 
\end{multline*}
Therefore, 
$$ 	\left|\int_{\varepsilon_0+\frac{\varepsilon_0}{9e}}^{2\varepsilon_0} e^{-2\tau r}\left(\sum_{k=0}^N2^k\,\mathcal I^k(Qb_k)(r)\right)\,dr \right|\leq \|Q\|_{L^{\infty}(I')}\,C_{\varepsilon_0}\,\frac{8e}{8e-1}\,e^{\frac{\varepsilon_0}{8e}\tau}\,e^{-2\varepsilon_0\tau-\frac{2\varepsilon_0}{9e}\tau}$$ 
The claim \eqref{key_asymp_est_2} now follows immediately from the fact that 
$$ \frac{\varepsilon_0}{8e}-\frac{2\varepsilon_0}{9e}\leq -2\varepsilon_2.$$
The proof of the estimate \eqref{key_asymp_est} now follows from combining \eqref{key_asymp_est_1}--\eqref{key_asymp_est_2} with \eqref{key_iden_asymp_1}.
\end{proof}

\begin{lemma}
	\label{lem_B_def}
	Let 
	\begin{equation}
		\label{def_D}
		D=\{(r,s) \in \R^2\,:\, r\in (\varepsilon_0,\varepsilon_0+\varepsilon_2)\quad s\in (\varepsilon_0,r) \}.
	\end{equation}
	There exists a bounded function $B \in L^{\infty}(D)$ defined by
	\begin{equation}\label{def_B}
		 B(r,s)=\lim_{m\to \infty}\sum_{k=1}^m \frac{2^k}{(k-1)!}\,(r-s)^{k-1}\,b_k(s),\end{equation}
	where the convergence is with respect to the norm $\|\cdot\|_{L^{\infty}(D)}$. Moreover, defining 
	$$ E_m(r,s) = B(r,s)-\sum_{k=1}^m \frac{2^k}{(k-1)!}\,(r-s)^{k-1}\,b_k(s),$$
	there holds 
		\begin{equation}\label{E_est} 
			\|E_m\|_{L^{\infty}(D)} \leq C_{\varepsilon_0}\, e^{-m} \quad \forall\, m\in \N,\end{equation}
		for some constant $C_{\varepsilon_0}>0$ depending only on $\varepsilon_0$.
\end{lemma}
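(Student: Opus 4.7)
The plan is to show that the partial sums of the series defining $B$ form a Cauchy sequence in $L^\infty(D)$ by establishing absolute and uniform convergence via a direct termwise bound. This will simultaneously give the existence of the limit, its boundedness, and the exponential tail estimate \eqref{E_est}.

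First I would combine the bound $\|b_k\|_{L^\infty((\varepsilon_0,2\varepsilon_0))} \leq C_{\varepsilon_0}(4k/\varepsilon_0)^k$ from Lemma~\ref{lem_product_amp}(ii) with the fact that $|r-s| \leq \varepsilon_2$ on $D$ to bound the $k$-th term uniformly on $D$ by
\[
\frac{2^k}{(k-1)!}\,|r-s|^{k-1}\,|b_k(s)| \leq \frac{C_{\varepsilon_0}}{\varepsilon_2} \cdot \frac{(8k\varepsilon_2/\varepsilon_0)^k}{(k-1)!}.
\]
Next I would invoke the elementary Stirling-type estimate $k^k/(k-1)! \leq k e^k$, which follows from $(1+1/(k-1))^{k-1} \leq e$ together with $k^{k-1}/(k-1)! \leq e^{k-1}(k/(k-1))^{k-1}$. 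This reduces the bound for the $k$-th term to
\[
\frac{C_{\varepsilon_0}}{\varepsilon_2}\cdot k\cdot\left(\frac{8e\varepsilon_2}{\varepsilon_0}\right)^{k}.
\]

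The crucial step is then to exploit the smallness of $\varepsilon_2$. Since $\varepsilon_2 < \varepsilon_1/(32e) < \varepsilon_0/(128e)$ (recall $\varepsilon_1 < \varepsilon_0/4$), one has $8e\varepsilon_2/\varepsilon_0 < 1/16$, so the $k$-th term is dominated by $(C_{\varepsilon_0}/\varepsilon_2)\cdot k\cdot(1/16)^k$. The series $\sum_k k(1/16)^k$ converges, so the partial sums are Cauchy in $L^\infty(D)$ and the limit $B$ exists as a bounded function on $D$, which gives the first assertion of the lemma.

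For the remainder estimate, applying the same pointwise bound to the tail gives
\[
\|E_m\|_{L^\infty(D)} \leq \frac{C_{\varepsilon_0}}{\varepsilon_2}\sum_{k=m+1}^{\infty} k\left(\frac{1}{16}\right)^k.
\]
Using $k\leq 2^k$, the summand is at most $(1/8)^k$, and since $1/8 < e^{-1}$ the geometric tail sums to a quantity bounded by $C_{\varepsilon_0} e^{-m}$, giving \eqref{E_est}. I do not anticipate any serious obstacle here; the only point requiring care is to keep track of the combinatorial factors and verify that the chosen smallness threshold on $\varepsilon_2$ (fixed already in Lemma~\ref{lem_F_est}) is indeed small enough to guarantee the geometric ratio strictly less than $e^{-1}$.
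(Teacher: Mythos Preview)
Your proposal is correct and follows essentially the same approach as the paper's proof: both bound the $k$-th term via $|r-s|\leq \varepsilon_2$ on $D$, the estimate $\|b_k\|_{L^\infty}\leq C_{\varepsilon_0}(4k/\varepsilon_0)^k$ from Lemma~\ref{lem_product_amp}(ii), and the Stirling-type inequality $k^k/(k-1)!\leq k e^k$, then use the smallness $\varepsilon_2<\varepsilon_0/(128e)$ to obtain a geometric ratio strictly below $e^{-1}$ (the paper absorbs the factor $k$ via $k\leq 2^k$ to reach $(16e\varepsilon_2/\varepsilon_0)^k<e^{-k}$, exactly as you do). The only cosmetic difference is that you track the explicit numerical ratio $1/16$ rather than the symbolic $8e\varepsilon_2/\varepsilon_0$.
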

\begin{proof}
	To justify the formal definition \eqref{def_B}, it suffices to note that given any $(r,s)\in D$ and any $m\in \N$, there holds
	$$ \sum_{k=1}^m \left|\frac{2^k}{(k-1)!}\,(r-s)^{k-1}\,b_k(s)\right| \leq C_{\varepsilon_0}\sum_{k=1}^m 2^k \,\frac{\varepsilon_2^{k-1}}{(k-1)!} \left(\frac{4k}{\varepsilon_0}\right)^k
		\leq \frac{C_{\varepsilon_0}}{\varepsilon_2}\sum_{k=1}^\infty k\,\left(\frac{8e\varepsilon_2}{\varepsilon_0}\right)^k <\infty,
	$$
	where we have used the fact that $\frac{8e\varepsilon_2}{\varepsilon_0}<1$. As the right hand side of the above bound is independent of $m\in \N$, we conclude that the definition \eqref{def_B} makes sense. In order to prove the remainder estimate \eqref{E_est} for the partial sums, we note that given any $m\in \N$ and any $\ell \geq m+1$, there holds 
	$$ 
	\sum_{k=m+1}^\ell \left|\frac{2^k}{(k-1)!}\,(r-s)^{k-1}\,b_k(s)\right| \leq \frac{C_{\varepsilon_0}}{\varepsilon_2}\sum_{k=m+1}^\ell \left(\frac{16e\varepsilon_2}{\varepsilon_0}\right)^k\leq C\, e^{-m},
	$$
	for some constant $C>0$ independent of $m$ and $\ell$, where we are using the fact that $\frac{16e\varepsilon_2}{\varepsilon_0}<e^{-1}$.
\end{proof}

\begin{lemma}
	\label{lem_final_est}
	There exists a constant $C>0$ depending only on $\|Q\|_{L^{\infty}((\varepsilon_0,2\varepsilon_0))}$ and $\varepsilon_0$ such that 
	$$ \left|\int_{\varepsilon_0}^{\varepsilon_0+\varepsilon_2} e^{-2\tau r}\,\left( Q(r) + \int_{\varepsilon_0}^r B(r,s)\,Q(s)\,ds \right)\,dr\right| \leq C\, e^{-(2\varepsilon_0+2\varepsilon_2)\tau},$$
	for all $\tau>0$ as in \eqref{tau_range}.
\end{lemma}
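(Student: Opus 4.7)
The plan is to deduce Lemma~\ref{lem_final_est} from Lemma~\ref{lem_key_asymp} by recognizing the sum $\sum_{k=0}^{N} 2^k \mathcal I^k(Qb_k)(r)$ as an $N$-term truncation of the integral operator $Q(r)+\int_{\varepsilon_0}^r B(r,s)Q(s)\,ds$, and then controlling the truncation error via \eqref{E_est}.

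Concretely, I would first split off the $k=0$ term: since $b_0\equiv 1$ (Lemma~\ref{lem_product_amp}(i)) and $\mathcal I^0$ is the identity, this term equals $Q(r)$. For $k\geq 1$, the explicit formula \eqref{int_comp} gives
\begin{equation*}
2^k\,\mathcal I^k(Qb_k)(r) \;=\; \int_{\varepsilon_0}^r \frac{2^k(r-s)^{k-1}}{(k-1)!}\, b_k(s)\, Q(s)\,ds.
\end{equation*}
Summing and using $B(r,s)=\sum_{k=1}^{N}\frac{2^k(r-s)^{k-1}}{(k-1)!}b_k(s)+E_N(r,s)$ from Lemma~\ref{lem_B_def}, I obtain
\begin{equation*}
\sum_{k=0}^{N} 2^k\mathcal I^k(Qb_k)(r) \;=\; Q(r) \;+\; \int_{\varepsilon_0}^r B(r,s)\,Q(s)\,ds \;-\; \int_{\varepsilon_0}^r E_N(r,s)\,Q(s)\,ds,
\end{equation*}
valid for $r\in(\varepsilon_0,\varepsilon_0+\varepsilon_2)$ since then $(r,s)\in D$ for $s\in(\varepsilon_0,r)$.

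Substituting into the bound from Lemma~\ref{lem_key_asymp} and applying the triangle inequality, it suffices to show
\begin{equation*}
\left|\int_{\varepsilon_0}^{\varepsilon_0+\varepsilon_2} e^{-2\tau r}\int_{\varepsilon_0}^r E_N(r,s)\,Q(s)\,ds\,dr\right| \;\leq\; C\, e^{-(2\varepsilon_0+2\varepsilon_2)\tau}.
\end{equation*}
Using the crude bound $e^{-2\tau r}\leq e^{-2\varepsilon_0\tau}$ on the $r$-interval, together with \eqref{E_est} and the trivial length estimates, the left-hand side is bounded by $C_{\varepsilon_0}\|Q\|_\infty\,\varepsilon_2^2\, e^{-N}\, e^{-2\varepsilon_0\tau}$.

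The last step is the arithmetic check that $e^{-N}$ absorbs the missing $e^{-2\varepsilon_2\tau}$ factor. Since $N=\lfloor\varepsilon_0\tau/(32e)\rfloor\geq\varepsilon_0\tau/(32e)-1$, one has $e^{-N}\leq e\cdot e^{-\varepsilon_0\tau/(32e)}$. Recalling from the setup preceding Lemma~\ref{lem_F_est} that $\varepsilon_2<\varepsilon_1/(32e)<\varepsilon_0/(128e)$, one gets $2\varepsilon_2<\varepsilon_0/(64e)<\varepsilon_0/(32e)$, so $e^{-N}\leq C\, e^{-2\varepsilon_2\tau}$. Combining gives the desired estimate. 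The only potential obstacle is this final bookkeeping of the exponents, which is why the parameter $\varepsilon_2$ was chosen so small relative to $\varepsilon_0$ back in Lemma~\ref{lem_F_est}; everything else is just carefully unwinding the definitions and applying the already-proved Lemmas~\ref{lem_key_asymp} and \ref{lem_B_def}.
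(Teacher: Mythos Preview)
Your proof is correct and follows essentially the same route as the paper's: both rewrite $\sum_{k=1}^{N}2^k\mathcal I^k(Qb_k)(r)$ via \eqref{int_comp} as $\int_{\varepsilon_0}^r Q(s)\bigl(B(r,s)-E_N(r,s)\bigr)\,ds$, bound the $E_N$ contribution using \eqref{E_est} together with the arithmetic $N\geq 2\varepsilon_2\tau$ (up to a constant), and combine with Lemma~\ref{lem_key_asymp}. Your exponent bookkeeping is slightly more explicit than the paper's, but the argument is the same.
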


\begin{proof}
In view of \eqref{int_comp}, there holds
$$ \sum_{k=1}^N 2^k\,\mathcal I^k(Qb_k)(r)= \int_{\varepsilon_0}^rQ(s)\left(\sum_{k=1}^N \frac{2^k}{(k-1)!}(r-s)^{k-1}\,b_k(s)\right)\,ds.$$
Therefore given every $r\in (\varepsilon_0,\varepsilon_0+\varepsilon_1)$ we have
\begin{multline}\label{Q_diff_B} \left|\int_{\varepsilon_0}^{r}Q(s)\,B(r,s)\,ds-\sum_{k=1}^N 2^k\,\mathcal I^k(Qb_k)(r)\right|=\left|\int_{\varepsilon_0}^rQ(s)\,E_N(r,s)\,ds\right|\\
	\leq C\|E_N\|_{L^{\infty}(D)}
	\leq C e^{-N}\leq C e^{-2\tau\varepsilon_2},\end{multline}
where we are using \eqref{E_est} with $m=N$ in the last step together with the fact that 
$$N=\lfloor \frac{\tau\varepsilon_0}{32e}\rfloor\geq 2\varepsilon_2\tau.$$
The proof is completed by combining the above bound \eqref{Q_diff_B} with \eqref{key_asymp_est}.
\end{proof}

\begin{lemma}
	\label{Q_zero}
	The function $Q \in L^{\infty}((\varepsilon_0,2\varepsilon_0))$ vanishes identically on the interval $(\varepsilon_0,\varepsilon_0+\varepsilon_2)$.
\end{lemma}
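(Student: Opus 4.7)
After the translation $r = s + \varepsilon_0$, Lemma~\ref{lem_final_est} reads
\[
\left|\int_0^{\varepsilon_2} e^{-2\tau s}\, g(s)\, ds\right| \leq C\,e^{-2\varepsilon_2 \tau},
\qquad
g(s) := Q(s+\varepsilon_0) + \int_{\varepsilon_0}^{s+\varepsilon_0} B(s+\varepsilon_0,\sigma)\,Q(\sigma)\,d\sigma,
\]
for all $\tau$ as in \eqref{tau_range}. The core claim is that this unusually fast decay of the (one-sided) Laplace transform of $g$, a bounded function supported in $[0,\varepsilon_2]$, forces $g\equiv 0$; once that is in hand, the identity $Q(r) + \int_{\varepsilon_0}^r B(r,s)\,Q(s)\,ds = 0$ and Gr\"onwall's inequality (using $B \in L^\infty(D)$ from Lemma~\ref{lem_B_def}) yield $Q \equiv 0$ on $(\varepsilon_0,\varepsilon_0+\varepsilon_2)$.

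To extract $g=0$ from the Laplace decay, I would introduce the entire function
\[
h(z) \;:=\; e^{\varepsilon_2 z}\int_0^{\varepsilon_2} e^{-zs}\,g(s)\,ds \;=\; \int_0^{\varepsilon_2} e^{z(\varepsilon_2 - s)}\,g(s)\,ds,
\]
and collect three bounds: (a) since $\varepsilon_2 - s \geq 0$, we have $|h(z)| \leq \varepsilon_2\|g\|_{L^\infty}$ whenever $\mathrm{Re}(z) \leq 0$, so $h$ is uniformly bounded on the closed left half plane and in particular on the imaginary axis; (b) the estimate from Lemma~\ref{lem_final_est} translates into $|h(2\tau)| \leq C$ for all sufficiently large real $\tau > 0$, and $h$ is trivially bounded on compact subsets of the real axis, so $h$ is bounded on the entire positive real ray; (c) in the right half plane, $|h(z)| \leq \varepsilon_2 \|g\|_{L^\infty}\,e^{\varepsilon_2\,\mathrm{Re}(z)}$, i.e.\ $h$ is of exponential type $\varepsilon_2$. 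Applying the Phragm\'en--Lindel\"of principle in the first and fourth quadrants separately — each a sector of opening $\pi/2$, with critical exponent $2$, while $h$ grows of order $1$ — the boundary bound propagates inside, yielding that $h$ is bounded throughout the right half plane, hence on all of $\mathbb C$.

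By Liouville's theorem $h$ is constant. Letting $z\to -\infty$ along the real axis, dominated convergence (with $|e^{z(\varepsilon_2-s)}g(s)|\leq |g(s)|$ and pointwise vanishing for $s<\varepsilon_2$) gives $h(z)\to 0$, so $h\equiv 0$. Thus the Laplace transform of $g$ vanishes identically, forcing $g=0$ a.e.\ on $[0,\varepsilon_2]$. Reverting to $Q$, this is the Volterra-type identity
\[
Q(r) \;=\; -\int_{\varepsilon_0}^r B(r,s)\,Q(s)\,ds \quad \text{for a.e. } r \in (\varepsilon_0,\varepsilon_0+\varepsilon_2),
\]
to which a standard Gr\"onwall argument applies and yields $Q\equiv 0$ on that interval.

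The main obstacle I anticipate is the careful bookkeeping in the Phragm\'en--Lindel\"of step: one must simultaneously verify that the exponential-type bound in the right half plane is of order strictly below the sector's critical exponent (comfortable here, $1<2$) and upgrade the Lemma's bound, which is stated only for large $\tau$, to a uniform bound on all of $[0,\infty)$ (trivial by continuity). Everything else — the Liouville step, the vanishing at $-\infty$, and the Gr\"onwall inversion of the Volterra equation — is routine once these boundary estimates are in place.
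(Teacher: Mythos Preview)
Your proposal is correct and follows essentially the same approach as the paper: the paper performs the reflection $r\mapsto \varepsilon_0+\varepsilon_2-r$ (instead of your translation) to arrive at the same entire function up to a rescaling, applies the Phragm\'en--Lindel\"of principle in each quadrant to conclude it is bounded and hence identically zero, and then finishes with the identical Gr\"onwall argument on the resulting Volterra equation. The only cosmetic difference is that you observe boundedness in the closed left half plane directly and invoke Phragm\'en--Lindel\"of only in the two right quadrants, whereas the paper states the bounds on the two axes and runs the argument in all four.
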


\begin{proof}
	Let us define $H\in L^{\infty}((0,\varepsilon_2))$ via
	$$ H(r)= Q(\varepsilon_0+\varepsilon_2-r)+\int_{\varepsilon_0}^{\varepsilon_0+\varepsilon_2-r} B(\varepsilon_0+\varepsilon_2-r,s)\,Q(s)\,ds.$$
	In view of Lemma~\ref{lem_final_est}, there holds:
	\begin{equation}
		\label{analytic_est_0}
		\left|\int_{0}^{\varepsilon_2} e^{2\tau r}\, H(r)\,dr\right| \leq C,
	\end{equation}
	for all $\tau> \min\{n,\frac{64e}{\varepsilon_0}\}$. Here, the constant $C>0$ depends only on $\varepsilon_0$ and $\|Q\|_{L^{\infty}((\varepsilon_0,2\varepsilon_0))}$. It is straightforward to see that the above estimate implies that  
		\begin{equation}
		\label{analytic_est}
		\left|\int_{0}^{\varepsilon_2} e^{2\tau r}\, H(r)\,dr\right| \leq C_1
	\end{equation}
	for all $\tau \in \R$, where the constant $C_1>0$ is independent of $\tau \in \R$. Let us now define  
	$$ F(z) = \int_{0}^{\varepsilon_2} e^{2zr}\, H(r)\,dr \quad \forall\, z\in \C.$$
	The function $F(z)$ is an entire holomorphic function. Given any $z\in \C$, there holds
	$$ |F(z)| \leq \|H\|_{L^{\infty}((0,\varepsilon_2))}\, e^{2\varepsilon_2|z|} \quad \forall\, z\in \C.$$
	The function $F(z)$ is uniformly bounded on the imaginary axis $\textrm{Re}(z)=0$ and finally uniformly bounded on the real axis $\text{Im}(z)=0$, thanks to \eqref{analytic_est}. Applying the Phragm\'{e}n-Lindel\"{o}f principle in each quadrant of the complex plane, we conclude that $F(z)$ must be uniformly bounded and hence identically equal to a constant. Finally, as 
	$$ \lim_{x\to -\infty}F(x)=0,$$
	we conclude that 
	$$F(z) =0 \quad \forall\, z\in \C.$$
	Next, plugging $z=-\frac{1}{2}\textrm{i}\xi$ with $\xi \in \R$, we deduce that
	$$ \int_0^{\varepsilon_2} e^{-\textrm{i}\xi r}H(r)\,dr=0 \quad \forall\, \xi \in\R.$$
	The above identity implies that $H(r)=0$ for almost every $r\in (0,\varepsilon_2)$. This in turn implies that
	$$ Q(r) + \int_{\varepsilon_0}^r B(r,s)\,Q(s)\,ds =0, \quad \text{for a.e. $r\in (\varepsilon_0,\varepsilon_0+\varepsilon_2)$.}$$
	By boundedness of $B$ on the set $D$ (see \eqref{def_D}), we obtain that
	$$ |Q(r)|\leq \|B\|_{L^{\infty}(D)}\,\int_{\varepsilon_0}^{r}|Q(s)|\,ds \quad \text{for a.e. $r\in (\varepsilon_0,\varepsilon_0+\varepsilon_2)$}.$$
	Finally, applying Gr\"{o}nwall's inequality we conclude that $Q$ must vanish identically on the interval $(\varepsilon_0,\varepsilon_0+\varepsilon_2)$.
\end{proof}
The proof of Theorem~\ref{thm_density_loc} is completed thanks to the following lemma.
\begin{lemma}
	\label{lem_proof_of_thm}
	The function $q=q_1-q_2\in L^{\infty}(M)$ vanishes identically on the neighbourhood $\mathcal V$ of the point $p$ defined via polar coordinates by
	\begin{equation}\label{vanish_set} (0,T)\times (\varepsilon_0,\varepsilon_0+\varepsilon_2)\times \mathbb S_+^{n-1}.\end{equation}
\end{lemma}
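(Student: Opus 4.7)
The plan is to use the conclusion $Q(r) \equiv 0$ on $(\varepsilon_0,\varepsilon_0+\varepsilon_2)$ from Lemma~\ref{Q_zero}, together with the freedom in the parameters $\lambda$, $\sigma_1$, $\sigma_2$, $\omega_1$, $\omega_2$, to strip off in succession the time and angular integrations in the definition \eqref{def_Q} of $Q$, and then to combine with the standing hypothesis $q=0$ on $\mathcal D_\delta$. For the time variable, I fix all remaining parameters and set $g(t) := \int_{\mathbb S^{n-1}_+} q(t,r,\theta)\,Y_{\sigma_1,\omega_1}\,Y_{\sigma_2,\omega_2}\,dV$. The map $\lambda \mapsto \int_\delta^{T-\delta} g(t)\,e^{4\lambda t}\,dt$ is an entire function of $\lambda \in \mathbb C$ that vanishes on $[0,1]$, hence vanishes identically on $\mathbb C$. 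Evaluating along $\lambda = i\xi/4$ and invoking Fourier uniqueness on the bounded interval $(\delta,T-\delta)$ forces $g \equiv 0$ a.e. Since $Y_{\sigma,\omega}$ is linear in $\omega$, the norm bound in \eqref{def_omega} is removable by scaling, and Fubini then yields, for a.e. $(t,r) \in (\delta,T-\delta)\times(\varepsilon_0,\varepsilon_0+\varepsilon_2)$ and every $\sigma_j \in [0,1]$, $\omega_j \in C^\infty(\partial \mathbb S^{n-1}_+)$,
\[
\int_{\mathbb S^{n-1}_+} q(t,r,\theta)\,Y_{\sigma_1,\omega_1}(\theta)\,Y_{\sigma_2,\omega_2}(\theta)\,dV(\theta) = 0.
\]
Specializing to $\sigma_1 = 0$ and $\omega_1 \equiv 1$ (for which $Y_{0,1} \equiv 1$ trivially solves \eqref{spherical_harmonics}) reduces this to $\int q(t,r,\cdot)\,Y_{\sigma,\omega}\,dV = 0$ for every $\sigma \in [0,1]$ and $\omega \in C^\infty(\partial \mathbb S^{n-1}_+)$.

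For the angular variable, I would introduce $v_\sigma \in H^2 \cap H^1_0(\mathbb S^{n-1}_+)$ solving $(-\Delta_{\mathbb S^{n-1}_+} + \sigma^2) v_\sigma = q(t,r,\cdot)$; Green's second identity converts the preceding family into $\partial_\nu v_\sigma \equiv 0$ on $\partial \mathbb S^{n-1}_+$ for every $\sigma \in [0,1]$. Expanding $q(t,r,\cdot) = \sum_k \hat q_k \phi_k$ in the Dirichlet eigenbasis $\{(\mu_k,\phi_k)\}$ of $-\Delta_{\mathbb S^{n-1}_+}$, one has the meromorphic identity
\[
\partial_\nu v_\sigma = \sum_k \frac{\hat q_k}{\mu_k + \sigma^2}\,\partial_\nu \phi_k
\]
as an $H^{1/2}(\partial \mathbb S^{n-1}_+)$-valued function of $\sigma^2 \in \mathbb C$, holomorphic away from the simple poles $\sigma^2 = -\mu_k$. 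Vanishing on the real interval $[0,1]$ forces vanishing as a meromorphic function, so every residue---namely the Neumann trace of the eigenspace combination $\sum_{\mu_j = \mu_k}\hat q_j \phi_j$---must be zero. Since a Dirichlet eigenfunction with vanishing Neumann trace would have zero Cauchy data, elliptic unique continuation forces $\sum_{\mu_j = \mu_k} \hat q_j \phi_j \equiv 0$ in every eigenspace, whence $\hat q_k = 0$ for every $k$ and therefore $q(t,r,\cdot) \equiv 0$. Combined with $q = 0$ on $\mathcal D_\delta$, this yields $q \equiv 0$ on $\mathcal V \cap M$.

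The main obstacle is precisely the angular step: harmonic functions on $\mathbb S^{n-1}_+$ alone (the $\sigma = 0$ slice, with free $\omega$) are not dense in $L^2(\mathbb S^{n-1}_+)$, so the density conclusion genuinely requires the extra parameter $\sigma \in [0,1]$ and the meromorphic/spectral argument above to convert the one-parameter family of boundary identities $\partial_\nu v_\sigma = 0$ into pointwise vanishing of $q$ on the hemisphere.
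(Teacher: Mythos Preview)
Your argument is correct and follows essentially the same route as the paper. The time step (analyticity in $\lambda$ on $[0,1]$, extension to $\C$, Fourier inversion on $(\delta,T-\delta)$) is identical. For the angular step in $n\geq 3$, what you do inline is precisely the content of the paper's Proposition~\ref{prop_appendix}, just in the dual formulation: the paper studies $z\mapsto \int q\,u_z^f$ with $u_z^f$ the Dirichlet solution of $(-\Delta_g-z)u=0$, whereas you study $\sigma^2\mapsto \partial_\nu v_\sigma$ with $v_\sigma$ the zero-Dirichlet solution of $(-\Delta+\sigma^2)v=q$. Green's identity makes these equivalent, and both reduce to the same spectral fact---that the Neumann traces $\{\partial_\nu\phi_{k,j}\}_j$ within a Dirichlet eigenspace are linearly independent---which you obtain via unique continuation and the paper cites from \cite{CK}.

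One small point to tighten: in dimension $n=2$ the paper's Definition~\ref{Y_def} sets $Y_\sigma(\theta)=e^{\sigma\theta}$ with no $\omega$ parameter, so after specializing $\sigma_1=0$ you only get $\int_0^\pi q(t,r,\theta)e^{\sigma\theta}\,d\theta=0$ for $\sigma\in[0,1]$---a single linear relation per $\sigma$, not enough to force $\partial_\nu v_\sigma=0$ at \emph{both} endpoints via your Green's-identity step. The paper therefore handles $n=2$ separately by the simpler move: the Laplace transform $\sigma\mapsto\int_0^\pi q\,e^{\sigma\theta}\,d\theta$ is entire, vanishes on $[0,1]$, hence on $\C$, and setting $\sigma=i\xi$ gives $q(t,r,\cdot)=0$ directly. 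Your $n\geq 3$ argument is unaffected.
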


\begin{proof}
	In view of Lemma~\ref{Q_zero} together with the definition of Q (see \eqref{def_Q}), we note that
		$$ 
		0=\int_\delta^{T-\delta}\int_{\mathbb S_+^{n-1}} q(t,r,\theta)\,e^{4\lambda t} Y_{\sigma_1,\omega_1}(\theta)\,Y_{\sigma_2,\omega_2}(\theta)\,dt\,dV_{\mathbb S_+^{n-1}} \quad \text{for a.e. $r\in (\varepsilon_0,\varepsilon_0+\varepsilon_2)$}.
		$$
	As the expression depends analytically on the parameter $\lambda$ and since it vanishes for all $\lambda \in (0,1)$ we conclude that the expression must vanish for all $\lambda \in \C$. Noting that $q=0$ on $\mathcal D_\delta$, we obtain that 
	\begin{equation}\label{eq_final_thm3} 0=\int_{\mathbb S_+^{n-1}} q(t,r,\theta)\, Y_{\sigma_1,\omega_1}(\theta)\,Y_{\sigma_2,\omega_2}(\theta)\,dV_{\mathbb S_+^{n-1}} \quad \text{ a.e. $(t,r)\in (0,T)\times  (\varepsilon_0,\varepsilon_0+\varepsilon_2)$}.\end{equation}
		We will first consider the case when $n=2$. In this case, we recall from \eqref{spherical_harmonics} that the role of $\omega$ is redundant. We choose
		$$ \sigma_1=\sigma_2=\sigma \in [0,1],$$ 
		 so that equation \eqref{eq_final_thm3} reduces to
		$$
		0=\int_{0}^\pi q(t,r,\theta)\, e^{2\sigma\theta}\,d\theta \quad \text{for a.e. $(t,r)\in (0,T)\times  (\varepsilon_0,\varepsilon_0+\varepsilon_2)$}.
		$$
		Since the latter expression depends analytically on $\sigma$ and since it vanishes for all $\sigma\in[0,1]$ we conclude that it must also vanish for all $\sigma \in \C$.  Therefore the function $q$ must vanish identically on the set \eqref{vanish_set}. 
		
		Next, we consider the remaining case when the dimension satisfies $n\geq 3$. In this case, we recall that equation \eqref{eq_final_thm3} is satisfied for all $Y_{\sigma_j,\omega_j}$, $j=1,2$, satisfying \eqref{spherical_harmonics} for some $\sigma=\sigma_j \in [0,1]$ and $\omega=\omega_j\in C^{\infty}(\p \mathbb S_+^{n-1})$ that additionally satisfies \eqref{def_omega}. Noting that the solutions to \eqref{spherical_harmonics} are invariant under scaling, we deduce that \eqref{def_omega} is redundant and therefore the equation is satisfied for any $\omega_j\in C^{\infty}(\p \mathbb S_+^{n-1})$, $j=1,2$. In view of this observation, we choose $\sigma_2=0$ and $\omega_2$ to be identical to one on $\p \mathbb S_+^{n-1}$ so that $Y_{\sigma_2,\omega_2}=1$ on $\mathbb S_+^{n-1}$. This implies that
		$$
		0=\int_{\mathbb S_+^{n-1}} q(t,r,\theta)\, Y(\theta)\,dV_{\mathbb S_+^{n-1}} \quad \text{for a.e. $(t,r)\in (0,T)\times (\varepsilon_0,\varepsilon_0+\varepsilon_2)$},
		$$ 
		 for all smooth functions $Y \in C^{\infty}(\overline{\mathbb S^{n-1}_+})$ that satisfy
		 $$ -\Delta_{\mathbb S^{n-1}_+} Y+\sigma^2Y=0 \quad \text{on $\mathbb S^{n-1}_+$}\quad \text{for some $\sigma \in[0,1]$}.$$
		 Applying Proposition~\ref{prop_appendix} with $(M,g)$ being the hemisphere $\mathbb S^{n-1}_+$, we conclude that $q$ must vanish identically on the set \eqref{vanish_set}. 
	\end{proof}

\section{Proofs of Theorem~\ref{thm_lin}--Theorem~\ref{thm_density}}
\label{sec_global}
The goal of this section is to derive the global claim in Theorem~\ref{thm_density}  from its local counterpart, namely Theorem~\ref{thm_density_loc}. It is then straightforward to see that Theorem~\ref{thm_lin} follows immediately from combining Lemma~\ref{lem_density} and Theorem~\ref{thm_density}. We will prove Theorem~\ref{thm_density} via a layer stripping argument where we combine a Runge approximation property for solutions to heat equations combined with repeatedly applying our local result as we march inwards into the domain $\Omega$. This method of deriving global completeness results for products of solutions from local ones has already been used in the context of elliptic equations, see e.g. \cite[Section 2]{DKSU}. We will closely follow their approach here, with some minor modifications. We will denote the complement of $\Gamma$ by $\widetilde \Gamma$, that is to say,
$$ \widetilde \Gamma= \p \Omega \setminus \Gamma.$$
Our starting point is a Runge approximation property, as follows. We remark that such approximation properties are rather standard, see e.g. \cite{RS} and the references therein.
\begin{lemma}[Runge's approximation property]
	\label{lem_Runge}
	Let $I=(a,b)\subset \R$ be an open non-empty interval. Let $\Omega_1 \subset \Omega_2\subset \R^n$ be connected bounded domains with smooth boundaries such that $\Omega_2 \setminus \overline \Omega_1$ is connected. Suppose that $S:= \p \Omega_1 \cap \p \Omega_2$ is a non-empty closed set (see Figure \eqref{fig2}). Let us define the four subspaces $\mathcal W_{j,\pm} \subset L^2(I\times \Omega_1)$, $j=1,2,$ via 
	$$ \mathcal W_{1,\pm} = \{ u \in C^{\infty}(\overline{I\times \Omega_1}):\, \pm\p_t u-\Delta u=0 \quad \text{on $I\times \Omega_1$ and $u|_{I\times S}=0$}\}, 
	$$
	and
	$$
	\mathcal W_{2,\pm} = \{ u|_{I\times \Omega_1}:\, u  \in C^{\infty}(\overline{I\times\Omega_2)}\quad \pm\p_t u-\Delta u=0 \quad \text{on $I\times \Omega_2$ and $u|_{I\times S}=0$}\} 
	$$
	The subspace $\mathcal W_{2,\pm}$  is dense in $\mathcal W_{1,\pm}$ with respect to the $L^2(I\times \Omega_1 )$ topology. 
\end{lemma}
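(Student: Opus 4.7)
The plan is to argue by Hahn-Banach duality combined with a lateral-boundary unique continuation principle for parabolic equations. It suffices to treat $\mathcal{W}_{j,+}$; the case $\mathcal{W}_{j,-}$ follows by the time reversal $t \mapsto a+b-t$, which swaps the two equations. So suppose $f \in L^2(I \times \Omega_1)$ is orthogonal to $\mathcal{W}_{2,+}$, and let $\tilde f \in L^2(I \times \Omega_2)$ denote its extension by zero outside $\Omega_1$.

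The first step is to solve the adjoint mixed problem
\[
-\partial_t v - \Delta v = \tilde f \text{ on } I \times \Omega_2, \qquad v|_{I \times \partial \Omega_2} = 0, \qquad v(b, \cdot) = 0,
\]
obtaining a unique $v \in L^2(I; H^2(\Omega_2)) \cap H^1(I; L^2(\Omega_2))$ by standard parabolic theory. Pairing this $v$ against an arbitrary $u \in \mathcal{W}_{2,+}$ and integrating by parts (using $v(b) = v|_{I \times \partial \Omega_2} = 0$, the PDE for $u$, and $u|_{I \times S} = 0$), the orthogonality $\int \tilde f\, u = 0$ collapses to the identity
\[
\int_{\Omega_2} u(a, \cdot)\, v(a, \cdot)\, dx \;-\; \int_I \int_{\partial \Omega_2 \setminus S} u\, \partial_\nu v\, d\sigma\, dt = 0.
\]
Varying $u$ so as to prescribe arbitrary initial data in $C_c^\infty(\Omega_2)$ with zero lateral data yields $v(a, \cdot) \equiv 0$ on $\Omega_2$; then varying the lateral data in $C_c^\infty(I \times (\partial \Omega_2 \setminus S))$ with zero initial data yields $\partial_\nu v \equiv 0$ on $I \times (\partial \Omega_2 \setminus S)$.

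On the exterior region $\Omega_2 \setminus \overline{\Omega_1}$, which is connected by assumption, $v$ now satisfies the homogeneous backward heat equation with vanishing Cauchy data on the nonempty relatively open lateral portion $I \times (\partial \Omega_2 \setminus S)$. Classical unique continuation from lateral Cauchy data for parabolic operators (Saut-Scheurer) then forces $v \equiv 0$ in $I \times (\Omega_2 \setminus \overline{\Omega_1})$. The global $L^2(I; H^2)$ regularity of $v$ provides single well-defined traces of $v$ and $\nabla v$ on the smooth interface $\partial \Omega_1 \setminus S$, readable from the exterior side, so $v = \partial_\nu v = 0$ there.

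Finally, given any $u \in \mathcal{W}_{1,+}$, a further integration by parts on $I \times \Omega_1$ expresses $\int_{I \times \Omega_1} f u\, dx\, dt$ as a sum of initial/final terms and a boundary integral over $I \times \partial \Omega_1$, all of which vanish thanks to the conditions established on $v$ together with $u|_{I \times S} = 0$. Hahn-Banach then yields the desired density. The main obstacle is the unique continuation step, which invokes a nontrivial classical parabolic result and genuinely uses the connectedness hypothesis on $\Omega_2 \setminus \overline{\Omega_1}$; extracting the boundary conditions on $v$ from the orthogonality relation is a secondary technical point (requiring some care with compatibility conditions when constructing test solutions $u$), while the remaining manipulations are routine.
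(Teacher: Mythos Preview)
Your proof is correct and follows essentially the same route as the paper: extend the annihilator by zero, solve the adjoint backward heat problem on $I\times\Omega_2$, use integration by parts against elements of $\mathcal W_{2,+}$ to force vanishing initial data and vanishing Neumann trace on $I\times(\partial\Omega_2\setminus S)$, apply parabolic unique continuation on the connected exterior $\Omega_2\setminus\overline{\Omega_1}$, and finish with a second integration by parts on $I\times\Omega_1$. The only differences are cosmetic (your explicit invocation of Hahn--Banach and Saut--Scheurer, and your remark about compatibility conditions when building the test solutions, which the paper leaves implicit).
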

\begin{proof}
	We only prove that the subspace $\mathcal W_{2,+}$  is dense in $\mathcal W_{1,+}$ with respect to the $L^2(I\times \Omega_1 )$ topology, as the claim for $\mathcal W_{1,+}$ follows analogously. 	It suffices to prove that given any $v\in L^2(I\times \Omega_1)$, if $v$ satisfies
	\begin{equation}
		\label{Runge_1}
		\int_{I\times \Omega_1} v\, u_2 \,dt\,dx =0 \quad \forall\, u_2\in \mathcal W_{2,+},
	\end{equation}
then it must also satisfy
	\begin{equation}
	\label{Runge_2}
	\int_{I\times \Omega_1} v\, u_1 \,dt\,dx =0 \quad \forall\, u_1\in \mathcal W_{1,+}.
\end{equation}
	\begin{figure}[!ht]
  \centering
  \includegraphics[width=0.6\textwidth]{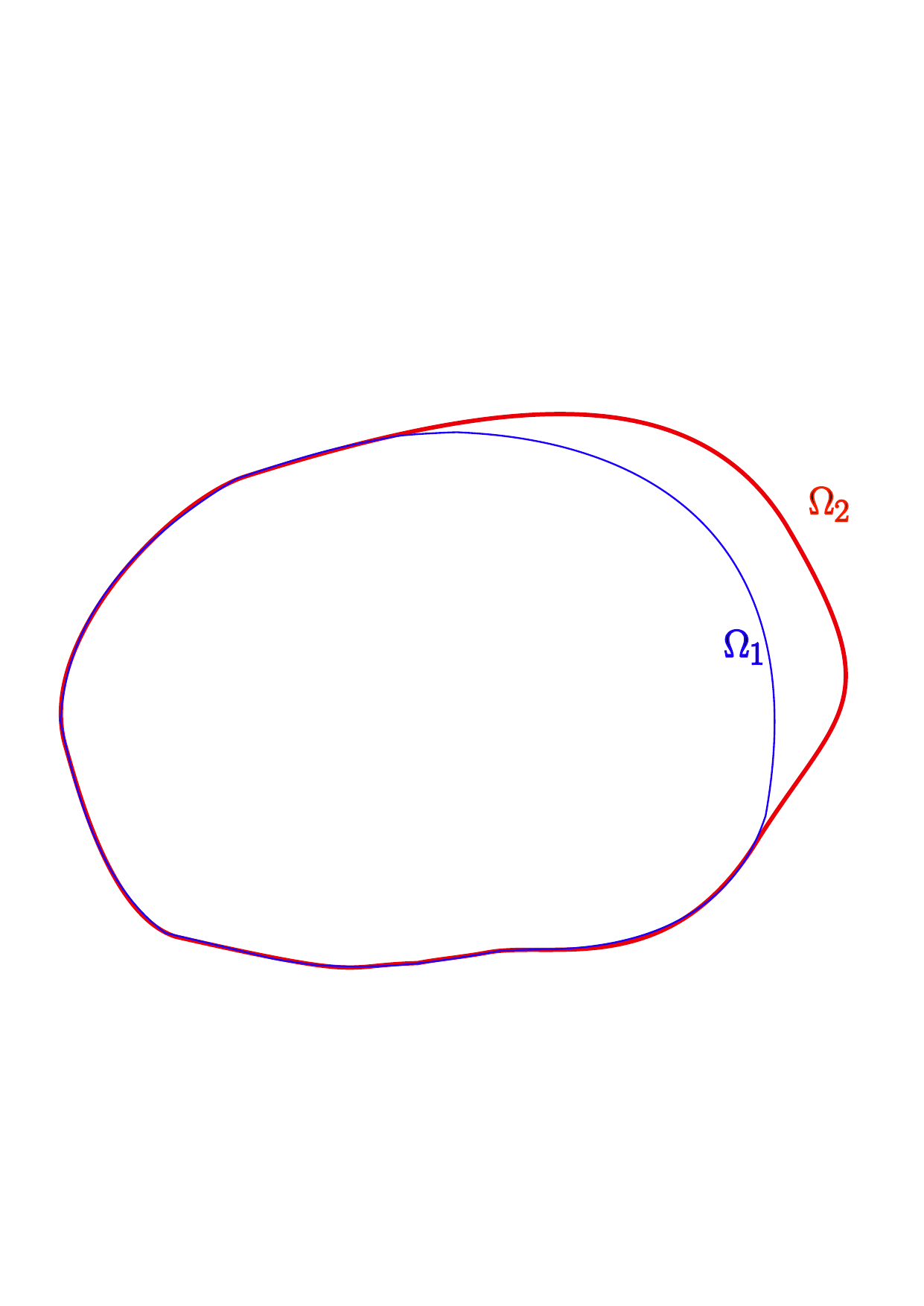}
  \caption{Figure for $\Omega_1$, $\Omega_2$  }
  \label{fig2}
\end{figure}

To show this, we suppose that $v\in L^2(I\times \Omega_1)$ satisfies \eqref{Runge_1}. We define $\tilde v \in L^2(I \times \Omega_2)$ via
$$
\tilde v= \begin{cases}
	v 
		&\text{on  $I\times \Omega_1$},
		\\
	0 & \text{on $I \times (\Omega_2\setminus \overline\Omega_1)$}.
	\end{cases}
$$
and let $w\in H^{1,2}(I \times \Omega_2)$ to be the unique solution to 
\begin{equation}\label{Runge_3}
	\begin{aligned}
		\begin{cases}
			-\p_t w - \Delta w=\tilde v 
			&\text{on  $(a,b)\times \Omega_2$},
			\\
			w=0 & \text{on $(a,b)\times \p \Omega_2$},
			\\
			w|_{t=b}=0 & \text{on $\Omega$}.
		\end{cases}
	\end{aligned}
\end{equation}
Then, equation \eqref{Runge_1} reduces as follows
\begin{multline*}
	\int_{I\times \Omega_1} v\, u_2 \,dt\,dx =\int_{I\times \Omega_2}  \tilde v\, u_2\,dt\,dx = \int_{I\times \Omega_2} (-\p_t-\Delta)w\, u_2\,dt\,dx\\
	= \int_{\Omega_2} w(a,x)\,u_2(a,x)\,dx -\int_{I\times \tilde{S}} u_2\,\p_\nu w \,dt\,dx,
\end{multline*}
where $\tilde S= \p \Omega_2 \setminus \p \Omega_1$. As the latter equation holds for any $u_2\in \mathcal W_{2,+}$, we deduce that 
\begin{equation}\label{Runge_4} w(a,x)=0 \quad \forall\, x\in \Omega_2 \quad \text{and} \quad\p_\nu w|_{I\times \tilde S}=0.\end{equation}
As $w|_{I\times \tilde S}$ is also zero, and since 
$$ -\p_t w - \Delta w=0 \quad \text{on $I\times (\Omega_2\setminus \overline{\Omega_1})$},$$
it follows from the unique continuation principle for heat equations that $w$ must vanish identically on the set $I\times (\Omega_2\setminus \overline{\Omega_1})$. In particular, there holds
\begin{equation}
	\label{Runge_5}
	w|_{I\times (\p \Omega_1\setminus \p\Omega_2)}= \p_\nu w|_{I\times (\p \Omega_1\setminus \p\Omega_2)}=0.
\end{equation}
Therefore, given any $u_1 \in \mathcal W_{1,+}$ there holds
$$
	\int_{I\times \Omega_1} v\, u_1 \,dt\,dx= \int_{I\times \Omega_1} (-\p_t-\Delta)w\, u_1\,dt\,dx=0,$$
	where we have used integration by parts in the last step together with \eqref{Runge_3}--\eqref{Runge_5}.
\end{proof}
We are ready to provide the layer stripping approach in proving how the local Theorem~\ref{thm_density_loc} yields the global Theorem~\ref{thm_density}. Our proof here will closely follow that of \cite[Theorem 1.1]{DKSU}.
\begin{proof}[Proof of Theorem~\ref{thm_density}]
	We begin by considering an arbitrary point $x_1 \in \Omega$. Let $\gamma:[0,1]\to \overline{\Omega}$ be a smooth non-self-intersecting curve joining the point $p\in \Gamma$ to the point $x_1\in \Omega$ such that $\gamma(0)=p$, $\dot{\gamma}(0)$ is the interior unit normal to $\p \Omega$ at the point $p$, $\gamma(1)=q$ and that $\gamma(s)\in \Omega$ for all $s\in (0,1]$. Next, we consider $\epsilon>0$ to be sufficiently small (to be quantified later) and subsequently, for each $s\in [0,1]$, we consider the closed tubular neighbourhoods
	$$ \Theta_{\epsilon}(s) =  \{ x\in \overline\Omega\,:\, \textrm{dist}(x,\gamma([0,s]))\leq \epsilon\},$$
	of the curve ending at $\gamma(s)$, and consider a closed subset on $[0,1]$ defined by
	$$ J= \{ s\in [0,1]\,:\, \text{$q(t,x)$ vanishes a.e. on $x\in \Theta_\epsilon(s)$ and $t\in [0,T]$}\}.$$ To conclude the proof of the theorem, it suffices to show that the closed set $J\subset [0,1]$ is also open and not empty, and hence $J=[0,1]$. 
	
	To this end, we start by noting that if $\epsilon>0$ is sufficiently small, in view of the fact that \eqref{density_eq} is satisfied, the set $J$ is nonempty, thanks to Theorem~\ref{thm_density_loc}. If $s\in J$ and $\epsilon$ is sufficiently small, then we
	may suppose $\p \Theta_\epsilon(s) \cap \p \Omega \subset \Gamma$ and that $\Omega\setminus \Theta_\epsilon(s)$ can be smoothed out into an open subset $\Omega_1 \subset \Omega$ with smooth boundary such that
	$$ \Omega \setminus \Theta_\epsilon(s) \subset \Omega_1 \quad \text{and} \quad \widetilde \Gamma \subset\p\Omega\cap \p\Omega_1,$$
	where we recall that $\widetilde \Gamma= \p\Omega \setminus \Gamma$. We also augment the set $\Omega$ by smoothing out the set $\Omega \cup B(p,\epsilon)$ into an open set $\Omega_2$ with smooth boundary in such a way that
	$$ \widetilde \Gamma \subset \p \Omega_1\cap \p \Omega\subset \p\Omega_2\cap \p \Omega.$$
	As $s\in J$, we know that $q(t,x)$ vanishes for a.e. $x\in \Theta_\epsilon(s)\cap \Omega$ and $t\in [0,T]$. Therefore, in view of Lemma~\ref{lem_Runge} together with \eqref{density_eq} we deduce that 
		\begin{equation}\label{density_eq_alt} \int_\delta^{T-\delta}\int_{\Omega_1} q\, u_1\,u_2 \,dx\,dt =0,\end{equation}
	for any pair $u_1\in C^{\infty}([\delta,T]\times \overline\Omega_1)$ and $u_2 \in C^{\infty}([0,T-\delta]\times \overline\Omega_1)$ that respectively satisfy 
	$$ \p_t u_1 -\Delta u_1=0 \quad \text{on $(\delta,T)\times \Omega_1$},$$ 
	with 
	$$\supp (u_1|_{(\delta,T)\times \p \Omega_1})\subset (\delta,T)\times (\p\Omega_1\setminus \p\Omega_2),$$
	and 
	$$ \p_t u_2 +\Delta u_2=0 \quad \text{on $(0,T-\delta)\times \Omega_1$},$$ 
	with 
	$$\supp (u_2|_{(0,T-\delta)\times \p \Omega_1})\subset (0,T-\delta)\times (\p\Omega_1\setminus \p\Omega_2).$$
	Next, by applying the local completeness result for products of solutions to heat equations stated in Theorem~\ref{thm_density_loc} (with $\Omega$ and $\Gamma$ replaced by $\Omega_1$ and $\p\Omega_1\setminus \p\Omega_2$ in its statement and everything else kept the same), we deduce that $q$ vanishes on a neighbourhood of $\p \Omega_1 \setminus (\p\Omega_1\cap \p\Omega_2)$. Hence, $q$ vanishes on a slightly larger neighbourhood $\Theta_\epsilon(\tau)$, $\tau>s$ proving that $J$ is indeed an open set. Finally, since the choice of $x_1\in \Omega$ is arbitrary, we conclude that $q$ vanishes identically on $(0,T)\times \Omega$. 
	\end{proof}

We conclude this section by repeating that the proof of Theorem~\ref{thm_lin} can now be trivially obtained thanks to Lemma~\ref{lem_density} and Theorem~\ref{thm_density}.

\section{Proof of Theorem~\ref{thm_semi}}
\label{sec_semi}
We assume that the condition \eqref{ts1} is fulfilled and we will prove that this condition implies that $a_1=a_2$. For this purpose, in view of condition (H), it would be enough to show by iteration that the condition
\begin{equation}\label{ts1c} \partial_\mu^ka_1(t,x,0)=\partial_\mu^ka_2(t,x,0),\quad (t,x)\in M\end{equation}
holds true for all $k\geq2$. Fix $m\geq2$, $\varepsilon_1,\ldots,\varepsilon_m\in[0,1)$ and $f_1,\ldots,f_m\in\ C^\infty_0((0,T)\times\Gamma)$. Fix also $\varepsilon:=(\varepsilon_1,\ldots,\varepsilon_m)\in\mathbb R^m$.
For $j=1,2$, let us consider the initial boundary value problem
\begin{equation}\label{heat_semi1}
	\begin{aligned}
		\begin{cases}
			\p_t u_{j,\varepsilon} - \Delta u_{j,\varepsilon} + a_j(t,x,u_{j,\varepsilon})=0 
			&\text{on  $M=(0,T)\times \Omega$},
			\\
			u_{j,\varepsilon}=\sum_{k=1}^m \varepsilon_jf_j & \text{on $\Sigma$},
			\\
			u_{j,\varepsilon}|_{t=0}=0 & \text{on $\Omega$}.
		\end{cases}
	\end{aligned}
\end{equation}
According to \cite[Proposition 3.1.]{KLL}, there exists $\kappa_j>0$ depending on $f_1,\ldots,f_m$, $T$ $a_j$ and $\Omega$ such that for $|\varepsilon|<\kappa_j$ the problem \eqref{heat_semi1} admits a unique solution $u_{j,\varepsilon}\in C^{1+\frac{\alpha}{2},2+\alpha}(\overline{M})$. Moreover, the map $\varepsilon\mapsto u_{j,\varepsilon}\in C^{1+\frac{\alpha}{2},2+\alpha}(\overline{M})$ will be smooth for $|\varepsilon|<\kappa_j$. We use an induction on $m$ based on analyzing the function
$$\frac{1}{m!}\partial_{\varepsilon_1}\ldots\partial_{\varepsilon_m}u_{j,\varepsilon}|_{\varepsilon=0},$$
to deduce the claim \eqref{ts1c} for each $k=m=2,3,\ldots$. We begin with the base case $m=2$. Let us consider for $\varepsilon=(\varepsilon_1,\varepsilon_2)$ as above, the function
$$
v_j = \frac{1}{2}\partial_{\varepsilon_1}\partial_{\varepsilon_2} u_{j,\varepsilon}|_{\varepsilon=0}.
$$
Note that $v_j$ satisfies the linear inhomogeneous equation
\begin{equation}
	\label{second_der_eq}
		\begin{aligned}
			\begin{cases}
				\p_t v_j - \Delta v_j= -\p^2_\mu a_j(t,x,0)\,u_1 u_2 
				&\text{on  $M=(0,T)\times \Omega$},
				\\
				v_{j}=0 & \text{on $\Sigma$},
				\\
				v_{j}|_{t=0}=0 & \text{on $\Omega$},
			\end{cases}
		\end{aligned}
\end{equation}
with $u_j$, $j=1,2$, solving the free linear heat equation \eqref{eq_free} with Dirichlet boundary data $f=f_j$.  Combining this with the arguments used in \cite[Section 3.2 and Section 4 Step 1 of the proof of Theorem 2.1]{KLL} we deduce from the equality of the Dirichlet-to-Neumann maps, namely,
\begin{equation}
	 \label{ts1}
	 \mathcal N_{a_1}^\Gamma(f)=\mathcal N_{a_2}^\Gamma(f)
\end{equation}
applied to the function $f=\varepsilon_1 f_1+\varepsilon_2f_2$ that there holds
$$ \p_\nu v_1 |_{(0,T)\times \Gamma}= \p_\nu v_2|_{(0,T)\times \Gamma}.$$
Comparing the above identity with \eqref{eq_free_source}--\eqref{Frechet_der} we deduce that
\begin{equation}\label{ts1a}
\mathcal S_{q_1}^\Gamma(f_2)= \mathcal S_{q_2}^\Gamma(f_2) \quad \forall\, f_2\in C^{\infty}_0(((0,T)\times \Gamma)),
\end{equation}
where 
$$ q_1(t,x) = \p^2_\mu a_1(t,x,0)u_1(t,x) \quad \text{and} \quad q_2(t,x) = \p^2_\mu a_2(t,x,0)u_1(t,x).$$
Now, we fix $\delta\in (0,\frac{T}{2})$ and $s\in (\delta,T-\delta)$ and consider a function $\zeta \in C^{\infty}_0((0,T))$ so that $\zeta(t)>0$ for $t\in (\delta,s)$ and that $\zeta(t)=0$ for $t\leq \delta$ and $t\geq s$. We also fix a non-zero function $g \in C^{\infty}(\p \Omega)$ so that $g(x)=0$ for all $x\in \p\Omega\setminus \Gamma$ and $g(x)\geq 0$ for all $x\in \Gamma$. Let $w$ be the unique solution to \eqref{eq_free} with $f(t,x)=\zeta(t)\,g(x)$ for all $(t,x)\in \Sigma$. By null controllability for the heat equation, see e.g. \cite[Theorem 2]{LR}, there exists $h\in C^{\infty}_0((s,T-\delta)\times \Gamma)$ such that the unique solution to
\begin{equation}\label{eq_free_s}
	\begin{aligned}
		\begin{cases}
			\p_t \tilde{w} - \Delta \tilde{w}=0 
			&\text{on  $(s,T)\times \Omega$},
			\\
			\tilde{w}=h & \text{on $(s,T)\times \p \Omega$},
			\\
			\tilde{w}(s,x)=w(s,x) & \text{for $x\in\Omega$},
		\end{cases}
	\end{aligned}
\end{equation}
satisfies $\tilde{w}(T-\delta,x)=0$, $x\in\Omega.$
Combining this with the fact that $\tilde{w}|_{(T-\delta,T)\times\partial\Omega}\equiv0$, we obtain
\begin{equation}\label{tilde_w_zero} \tilde{w}(t,x) =0, \quad \forall\, (t,x)\in (T-\delta,T)\times \Omega.\end{equation}
We will now fix the function $f_1\in C^{\infty}_0((0,T)\times \Gamma)$ above to be defined by 
\begin{equation}\label{ff}f_1(t,x)=\zeta(t)g(x)+h(t,x),\quad (t,x)\in\Sigma.\end{equation}
Noticing that $u_1|_{(s,T)\times\Omega}$ solves \eqref{eq_free_s}, we deduce that $u_1|_{(s,T)\times\Omega}=\tilde{w}$ and \eqref{tilde_w_zero} implies that $u_1|_{(T-\delta,T)\times\Omega}\equiv0$. Moreover, recalling that $f_1|_{(0,\delta)\times\partial\Omega}\equiv0$, we deduce that $u_1(t,x)=0$ for $t\leq \delta$ and $x\in \Omega$. We therefore deduce that 
\begin{equation}\label{q_initial_vanish} q_1 = q_2=0 \quad \text{on $\mathcal D_\delta$}.\end{equation}
In view of \eqref{ts1a}--\eqref{q_initial_vanish}, we can apply Theorem~\ref{thm_lin} to conclude that $q_1=q_2$ for all $(t,x)\in M$. In addition, applying the strong maximum principle for parabolic equations (see e.g. \cite[Theorem 11 pp 375]{Ev}), we obtain
$$u_1(t,x)>0,\quad (t,x)\in(\delta,s)\times \Omega$$
and it follows that 
$$\partial_\mu^2a_1(t,x,0)=\partial_\mu^2a_2(t,x,0) \quad \forall\, (t,x) \in (\delta,s)\times \Omega.$$
Finally, we conclude that the claim \eqref{ts1c} holds for $k=2$ since $\delta\in (0,\frac{T}{2})$ and $s\in (\delta,T-\delta)$ are allowed to change arbitrarily.

We will now fix $m\geq2$ and assume that \eqref{ts1c} is fulfilled for all $k=2,\ldots,m-1$. We let $\varepsilon$ and $u_{j,\varepsilon}$, $j=1,2,$ be as above and note that the function
$$ v_j = \frac{1}{m!}\partial_{\varepsilon_1}\ldots\partial_{\varepsilon_m}u_{j,\varepsilon}|_{\varepsilon=0}$$
satisfies
\begin{equation}
		\label{m_der_eq}
	\begin{aligned}
		\begin{cases}
			\p_t v_j - \Delta v_j= -\p^m_\mu a_j(t,x,0)\,u_1 u_2\ldots u_m  + H_j(t,x),
			&\text{on  $M=(0,T)\times \Omega$},
			\\
			v_{j}=0 & \text{on $\Sigma$},
			\\
			v_{j}|_{t=0}=0 & \text{on $\Omega$},
		\end{cases}
	\end{aligned}
	\end{equation}
with $u_j$, $j=1,\ldots,m$, solving the problem \eqref{eq_free} with $f=f_j\in C^\infty_0((0,T)\times\Gamma)$ 
and where $H_1=H_2$, thanks to our induction hypothesis for $k\leq m-1$. It is straightforward to see that the condition \eqref{ts1} applied to $f=\sum_{k=1}^m\varepsilon_kf_k$ implies that there holds
$$ \p_\nu v_1 |_{(0,T)\times \Gamma}= \p_\nu v_2|_{(0,T)\times \Gamma}.$$
Comparing the above identity with \eqref{eq_free_source}--\eqref{Frechet_der} together with the fact that $H_{1}=H_2$, we deduce that
\begin{equation}\label{ts2a}
	\mathcal S_{q_1}^\Gamma(f_m)= \mathcal S_{q_2}^\Gamma(f_m) \quad \forall\, f_m \in C^{\infty}_0((0,T)\times \Gamma),
\end{equation}
where 
$$ q_j(t,x) =\p^m_\mu a_j(t,x,0)\,u_1(t,x)\ldots u_{m-1}(t,x) \quad j=1,2.$$
Following as in the case of $m=2$, we may choose 
$$ f_1=\ldots=f_{m-1}=\zeta(t)\,g(x)+h(t,x),$$
and apply Theorem~\ref{thm_lin} to conclude that \eqref{ts1c} is fulfilled for $k=m$. Therefore, \eqref{ts1c} holds true for all $k\geq2$ which completes the proof of the theorem.

\section{Completeness of fixed frequency waves}
Our goal in this section is to prove the completeness result that was used in the last step of our proof of Theorem~\ref{thm_density}. Precisely, we prove the following proposition.
\begin{proposition}
	\label{prop_appendix}
	Let $I=(a,b)\subset \R$ be a nonempty open interval and let $(M,g)$ be a smooth connected compact Riemannian manifold of dimension $n\geq 2$ with smooth boundary. Let $q\in L^{\infty}(M)$ satisfy 
	$$ \int_M q(x)\,u(x)\,dV_g=0,$$
	for all $u\in C^{\infty}(M)$ that satisfy 
	$$(-\Delta_g +\lambda)u=0 \quad  \text{on $M$},$$
	for some $\lambda \in I$. Then, $q\equiv 0$ on $M$.
\end{proposition}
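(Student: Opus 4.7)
The plan is to convert the hypothesis into a vanishing condition on the Cauchy data of certain linear combinations of Dirichlet eigenfunctions via analytic continuation in $\lambda$, and then close the argument with a standard unique continuation principle. Let $\{\mu_k\}_{k\ge 1}$ denote the Dirichlet eigenvalues of $-\Delta_g$ on $M$, let $\{\phi_k\}$ be an associated $L^2$-orthonormal basis of eigenfunctions, and write $q_k:=\int_M q\,\phi_k\,dV_g$ for the eigenfunction coefficients of $q$. For any $f\in C^\infty(\partial M)$ and any $\lambda\in\C\setminus\{-\mu_k\}$, the boundary value problem $(-\Delta_g+\lambda)u=0$ in $M$ with $u|_{\partial M}=f$ admits a unique smooth solution $u(\lambda,f)$, and $\lambda\mapsto u(\lambda,f)$ is meromorphic in $\lambda$ with simple poles at $\lambda=-\mu_k$ (owing to the corresponding meromorphy of the Dirichlet resolvent $R(\lambda)=(-\Delta_g+\lambda)^{-1}$). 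Consequently
\begin{equation*}
F(\lambda):=\int_M q\,u(\lambda,f)\,dV_g
\end{equation*}
is meromorphic on $\C$. The hypothesis gives $F(\lambda)=0$ for every $\lambda\in I$ off the discrete set $\{-\mu_k\}$; since this set has an accumulation point in $I$, the identity theorem forces $F\equiv 0$ on $\C$.

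Next I would make $F$ explicit. Picking any smooth extension $\tilde f\in C^\infty(M)$ of $f$ and writing $u(\lambda,f)=\tilde f+R(\lambda)\bigl((\Delta_g-\lambda)\tilde f\bigr)$, Green's identity together with $\phi_k|_{\partial M}=0$ yields $\langle(\Delta_g-\lambda)\tilde f,\phi_k\rangle=-(\mu_k+\lambda)\langle\tilde f,\phi_k\rangle-\int_{\partial M}f\,\partial_\nu\phi_k\,dS_g$, and the telescoping produced by the factor $(\mu_k+\lambda)$ in the resolvent sum collapses the contribution of $\tilde f$, leaving the clean partial-fraction formula
\begin{equation*}
F(\lambda)=-\sum_{k\ge 1}\frac{q_k}{\mu_k+\lambda}\int_{\partial M}f\,\partial_\nu\phi_k\,dS_g.
\end{equation*}
Since $F\equiv 0$, the residue at each pole $-\mu$ must vanish, so $\sum_{j:\mu_j=\mu}q_j\int_{\partial M}f\,\partial_\nu\phi_j\,dS_g=0$ for every $f\in C^\infty(\partial M)$, which gives
\begin{equation*}
\sum_{j:\mu_j=\mu}q_j\,\partial_\nu\phi_j=0\ \text{ on }\partial M
\end{equation*}
for every eigenvalue $\mu$.

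To finish, fix $\mu$ and set $v:=\sum_{j:\mu_j=\mu}q_j\phi_j$, a finite sum since eigenspaces are finite-dimensional. Then $(-\Delta_g-\mu)v=0$ on $M$ with $v|_{\partial M}=\partial_\nu v|_{\partial M}=0$. Aronszajn's unique continuation principle for second-order elliptic equations with smooth coefficients (or, equivalently, extending $v$ by zero across $\partial M$ and applying interior UCP) yields $v\equiv 0$ on $M$, and $L^2$-orthonormality of the $\phi_j$ inside the eigenspace then forces $q_j=0$ for all such $j$. Varying $\mu$ over the spectrum gives $q_k=0$ for every $k$, so $q=0$ in $L^2(M)$, hence a.e.\ on $M$. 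The one step that really requires care is the justification of the partial-fraction expression for $F(\lambda)$: one must ensure that the resolvent series converges in a topology strong enough to pair against $q\in L^\infty(M)$ and to permit the residue extraction, which is handled by standard spectral-theoretic resolvent bounds on compact subsets of $\C\setminus\{-\mu_k\}$; once this is in place the residue identification and the UCP step are routine.
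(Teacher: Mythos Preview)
Your proof is correct and follows essentially the same route as the paper's: both form the meromorphic family $\lambda\mapsto\int_M q\,u(\lambda,f)\,dV_g$ of Dirichlet solutions, use analytic continuation from $I$ to show it vanishes identically, extract the residue at each Dirichlet eigenvalue to obtain $\sum_{j:\mu_j=\mu} q_j\,\partial_\nu\phi_j|_{\partial M}=0$ for every $f$, and conclude that all eigenfunction coefficients $q_k$ vanish. The only difference is cosmetic: the paper quotes a lemma of Canuto--Kavian for the linear independence of the Neumann traces $\{\partial_\nu\phi_j|_{\partial M}\}$ within a fixed eigenspace, whereas you prove this inline by setting $v=\sum_j q_j\phi_j$ and invoking unique continuation for the Cauchy problem---which is precisely how that lemma is established.
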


Before presenting the proof of the above proposition, let us fix some notation. We recall that in local coordinates $(x^1,\ldots,x^n)$, the Laplace--Beltrami operator $\Delta_g$ is given by the expression
$$ \Delta_g u = \sum_{j,k=1}^n \frac{1}{\sqrt{\left|\det g\right|}}\frac{\p}{\p x^j}\left( \sqrt{\left|\det g\right|} \,g^{jk}\frac{\p u}{\p x^k} \right)\quad \forall\, u\in C^{\infty}(M),$$
where $\det g$ stands for the determinant of the metric tensor $g=(g_{jk})_{j,k=1}^n$ and $(g^{jk})_{j,k=1}^n$ stands for the components of the inverse matrix. We consider also the Laplace operator with homogeneous Dirichlet boundary condition on $(M,g)$ associated with the unbounded operator $A$ acting on $L^2(M)$ with domain $D(A)=H^1_0(M)\cap H^2(M)$ satisfying $Au=-\Delta_g u$, for all $u\in D(A)$. We will denote by $\{\lambda_k\}_{k=1}^{\infty}\subset (0,\infty)$ the  eigenvalues of $A$ written in strictly increasing order so that 
$$ 0<\lambda_1<\lambda_2<\ldots,$$
and write $d_k$ for the multiplicity of eigenvalue $\lambda_k$. For each $k\in \N$, we let $\{\phi_{k,j}\}_{j=1}^{d_k}\subset C^{\infty}(M)$be  an $L^2(M)$-orthonormal set for the eigenspace of $A$ associated to the eigenvalue $\lambda_k$ so that
$$ A \phi_{k,j} = \lambda_k\,\phi_{k,j}  \quad \forall\, k\in \N \quad j=1,\ldots,d_k.$$
 Next, we define for each $k \in \N$, the operator 
$$ S_k: C^{\infty}(\p M) \to L^2(M),$$
by the expression  
$$ (S_k f)(x) = \sum_{j=1}^{d_k} \left(\int_{\p M} f\,\p_\nu \phi_{k,j}\,dV_g\right)\, \phi_{k,j}(x),$$
where $\nu$ is the exterior unit normal vector field on $\p M$. 
\begin{proof}[Proof of Proposition~\ref{prop_appendix}]
Let $\mathbb A=\C \setminus \{\lambda_1,\lambda_2,\ldots\}$. Following \cite[Lemma 3.2.]{BCDKS}, for each $z \in \mathbb A$, and any $f\in C^{\infty}(\p M)$,we recall that   the boundary value problem

\begin{equation}\label{laplace}
	\begin{aligned}
		\begin{cases}
			(- \Delta_g-z) u_z^f=0 
			&\text{on  $M$},
			\\
			u_z^f=f & \text{on $\p M$},
		\end{cases}
	\end{aligned}
\end{equation}
admits a unique solution $u^f_z\in H^2(M)$ taking the form
$$ u_z^f(x) = \sum_{k=1}^{\infty}\frac{1}{\lambda_k-z}\,(S_kf)(x).$$
In addition, fixing $\mu\in \mathbb A$, one can check that $$u_z^f=u_\mu^f+(z-\mu)(A-z)^{-1}u_\mu^f,\quad z\in \mathbb A$$
and deduce from this representation that  the mapping 
$$ z \mapsto u_z^f \quad z \in \mathbb A,$$
is holomorphic on the set $\mathbb A$ in the sense of a map taking values in $L^2(M)$ with  poles at $\{\lambda_k:\ k\in\mathbb N\}$. Therefore, given any fixed $f\in C^{\infty}(\p M)$ the function 
$$ z\mapsto \int_{M} q \, u_z^f\,dV_g \quad z\in \mathbb A,$$
is  holomorphic. By the hypothesis of the proposition, the above expression vanishes for each $z\in I$. Therefore, by analytic continuation it must vanish for all $z\in \mathbb A$. Hence, 
$$ \int_M q\, u_z^f \,dV_g=0 \quad \forall\, f\in C^{\infty}(\p M) \quad \forall\, z\in \mathbb A.$$ 
Let $k\in \N$ be fixed. Multiplying the above expression with $\lambda_k-z$ and letting $z \to \lambda_k$, we conclude that there holds
$$ \int_M q \,S_k(f) \,dV_g =0 \quad \forall\, f\in C^{\infty}(\p M) \quad \forall\, k\in \N.$$
Therefore, given any $k\in \N$, we have
\begin{equation}\label{eq_appendix} \sum_{j=1}^{d_k} \left(\int_M q\,\phi_{k,j}\,dV_g\right)\, \left(\int_{\p M} f\p_\nu \phi_{k,j}\,dV_g\right)=0\quad  \forall\, f\in C^{\infty}(\p M)\quad \forall\, k\in \N.\end{equation} 
Moreover, in light of \cite[Lemma 2.1.]{CK}, we recall  that, for each $k\in \N$, the maps
\begin{equation}\label{set_indep}\p_\nu \phi_{k,1}|_{\p M},\p_\nu \phi_{k,2}|_{\p M},\ldots,\p_\nu \phi_{k,d_k}|_{\p M}\in L^2(\p M),\end{equation}
are linearly independent.  Combining this with the fact that in formula \eqref{eq_appendix} the map $f\in C^{\infty}(\p M)$ is arbitrary chosen, we deduce that
$$ \int_M q\,\phi_{k,j}\,dV_g=0 \quad \forall\, k\in \N \quad j=1,\ldots,d_k.$$
Finally, recalling that $\{\phi_{k,1},\ldots,\phi_{k,d_k}:\ k\in\mathbb N\}$ is an orthonormal basis of $L^2(M)$, we deduce that $q=0$ on $M$.
\end{proof}

\end{document}